\documentclass[twoside,11pt,abbrvbib]{article}
\usepackage{lscape}
\usepackage{jmlr2e}
\usepackage[utf8]{inputenc} 
\usepackage[T1]{fontenc}    
\usepackage{url}            
\usepackage{booktabs}       
\usepackage{amsfonts}       
\usepackage{nicefrac}       
\usepackage{microtype}      
\usepackage{microtype}
\usepackage{graphicx}
\usepackage{wrapfig}
\usepackage[small]{caption}
\usepackage{pgf}
\usepackage{subcaption}
\usepackage{booktabs} 
\usepackage{rotating}
\usepackage{wrapfig}
\def\ftime{f.time (sec)}
\def\Alg{Algorithm~\ref{alg:smoothing}}

\usepackage{times}
\usepackage{bm}
\usepackage{here}
\usepackage{amsmath}
\usepackage{amssymb}
\usepackage{algorithm,algorithmic}

\newcommand{\bA}{{\bm A}}
\newcommand{\A}{{\bm A}}
\newcommand{\B}{{\bm B}}
\newcommand{\C}{{\bm C}}
\renewcommand{\H}{{\bm H}}
\newcommand{\hwCw}{\hat{\w}^{\top}\C(\bar{\blambda})\hat{\w}}
\newcommand{\tred}{}

\newcommand{\bC}{\bm K}
\newcommand{\bff}{\bm f}
\renewcommand{\b}{b}
\newcommand{\diag}{\mathrm{diag}}

\newcommand{\argmin}{\operatornamewithlimits{argmin}}

\newcommand{\0}{\bm 0}
\newcommand{\temp}{\frac{2}{p-2}}
\newcommand{\mtemp}{\frac{2}{2-p}}
\newcommand{\redcolor}{}
\def\R{\mathbb{R}}
\def\algone{Alg.\ref{alg:smoothing}-A}
\def\algtwo{Alg.\ref{alg:smoothing}-B}
\def\dim{n+1}
\newcommand{\bxi}{{\bm \xi}}

\newcommand{\hzeta}{\hat{\zeta}}
\newcommand{\w}{\bm w} 
\newcommand{\blambda}{\bm \lambda}
\newcommand{\bzeta}{\bm \zeta}
\newcommand{\bW}{\bm W}
\newcommand{\bw}{\bm w}
\newcommand{\bphi}{\varphi}
\newcommand{\bbeta}{\bm \eta}
\newtheorem{LEMM}[theorem]{Lemma}
\newtheorem{assumption}[theorem]{Assumption}
\usepackage{ifthen}
\newcommand{\COMM}[2]{{
\ifthenelse{\equal{#1}{AT}}{\color{red}}{
\ifthenelse{\equal{#1}{TO}}{\color{blue}}{
\ifthenelse{\equal{#1}{AK}}{\color{green}}}}
[#1: #2]
}}
\title{
  On $\ell_p$-hyperparameter Learning via Bilevel Nonsmooth Optimization}
\author{
\name Takayuki Okuno \\
\email{takayuki.okuno.ks@riken.jp} \\
\addr
Center for Advanced Intelligence Project, RIKEN\\
Tokyo 103-0027, Japan \\
\AND 
\name Akiko Takeda \\
\email takeda@mist.i.u-tokyo.ac.jp \\
\addr 
Graduate School of Information Science and Technology, \\
The University of Tokyo\\
Tokyo 113-8656, Japan; \\
Center for Advanced Intelligence Project, RIKEN\\
Tokyo 103-0027, Japan \\
\AND 
\name Akihiro Kawana\\
\email
kawana.ak.pp@gmail.com
\\
\addr Department of Industrial Engineering and Economics,\\
Tokyo Institute of Technology\\
Tokyo 152-8550, Japan\\
(\tred{\scriptsize This research was conducted when he was a student at Tokyo Institute of Technology, and is completely irrevalent to the present company.} )
\AND 
\name Motokazu Watanabe\\
\email
mwatanabe@g.ecc.u-tokyo.ac.jp
\\
\addr
Department of Mathematical Informatics,\\
The University of Tokyo\\
Tokyo 113-8656, Japan;\\
\tred{Present Address: Tokyo Marine \& Nichido Fire Insurance Co., Ltd., Tokyo, Japan\\
  ({\scriptsize This research was conducted when he was a student at The University of Tokyo, and is completely irrevalent to the present company.} )}
}
\editor{}
\begin{document}
\maketitle
\begin{abstract}
  We propose a bilevel optimization strategy for selecting the best hyperparameter value for the nonsmooth $\ell_p$ regularizer with $0<p\le 1$. The concerned bilevel optimization problem has a nonsmooth, possibly nonconvex, $\ell_p$-regularized problem as the lower-level problem. Despite the recent popularity of nonconvex $\ell_p$-regularizer and  the usefulness of bilevel optimization for selecting hyperparameters, algorithms for such bilevel problems have not been studied because of the difficulty of $\ell_p$-regularizer.\\
  \indent Our contribution is the proposal of the first algorithm equipped with a theoretical guarantee for finding the best hyperparameter of $\ell_p$-regularized supervised learning problems. Specifically, we propose a smoothing-type algorithm for the above mentioned bilevel optimization problems and provide a theoretical convergence guarantee for the algorithm. Indeed, since optimality conditions are not known for such bilevel optimization problems so far,  new necessary optimality conditions, which are called the SB-KKT conditions, are derived and it is shown that a sequence generated by the proposed algorithm actually accumulates at a point satisfying the SB-KKT conditions under some mild assumptions.
The proposed algorithm is simple and scalable as our numerical comparison to Bayesian optimization and grid search indicates.
\end{abstract}
\begin{keywords}
Hyperparameter optimization, Bilevel optimization, $\ell_p$-regularizer, Smoothing method, 
\end{keywords}
\section{Introduction}\label{introduction}
Hyperparameters are parameters that are set manually outside of a learning algorithm in the
context of machine learning. Hyperparameters often play important roles in exhibiting a high prediction performance. For example, a regularization parameter controls a trade-off between 
the regularization (i.e., model complexity)  and  the training set error (i.e., empirical error).
If the hyperparameters are tuned properly, the predictive performance of learning algorithms
will be increased. 

Hyperparameter optimization or learning is the task of finding
(near) optimal values of hyperparameters.
There are mainly a few methods currently in use for supervised learning. The most popular one would be grid search.
The method is to divide the space of possible hyperparameter values into regular intervals (a grid), train a learning model using training data for all values on the grid sequentially or preferably in parallel, and choose the best one with the highest prediction accuracy tested on validation data with e.g., using cross validation. 

There are other techniques for hyperparameter optimization; random search that evaluates learning models for randomly sampled hyperparameter values or more sophisticated method called Bayesian optimization \citep{Mockus1978}. To find a classifier/regressor with good prediction performance, it is reasonable to minimize the validation error in terms of hyperparameters. However we do not know
the explicit form of the validation error function, say $\bar{f}$, represented with hyperparameter, while we are able to compute the validation error of a classifier/regressor obtained with given hyperparameters $\blambda$, i.e., $\bar{f}(\blambda)$. For such a black-box (meaning unknown) objective function $\bar{f}$,
 Bayesian optimization algorithms use previous observational values $\bar{f}(\blambda)$ at some hyperparameter values $\bm \lambda$ to determine the next point $\bm \lambda^+$ to evaluate $\bar{f}(\blambda^+)$. This is based on the assumption that the function $\bar{f}$ is described by a Gaussian process as a prior.
 There are still essential questions unresolved;  how to select a kernel for the Gaussian process, 
 how to select the range of values to search in,  and lots of implementation details.
 As regards a comprehensive survey of hyperparameter optimization, we refer to \citep{feurer_hyperparameter_2018}.

Bilevel optimization is a more direct approach for finding a best set of hyperparameter values. A bilevel optimization problem
consists of two-level optimization problems; the upper-level problem minimizes the validation error in terms of hyperparameters and the
lower-level problem finds a best fit line for training data combined with a regularizer using given hyperparameter values.
Actually, the spirit of bilevel optimization underlies the methods introduced above. As mentioned below, some existing works pointed out the usefulness of the bilevel formulation for some classes of hyperparameters.
However, there is still a lot of room to pursue the bilevel approach further, in particular, for hyperparameter optimization of nonsmooth regularizers.
\subsection{Our Contribution}
The purpose of this paper is to provide a bilevel optimization approach for finding a best set of hyperparameter values for the nonsmooth $\ell_p$ ($p \le 1$) regularizer.
The nonsmooth bilevel optimization approaches examined here are entirely novel in the field of mathematical optimization too.
In recent years, research on sparse optimization using nonconvex nonsmooth regularizers 
has been actively conducted in  machine learning  \citep{gong2013general,hu2017group}, 
signal/image processing\,\citep{chen2010lower,hintermller2013nonconvex,wen2017nonconvex,marjanovic2013exact},
and continuous optimization \citep{ge2011note,lai2011unconstrained,bian2013worst,chen2014complexity,bian2015complexity,bian2017optimality}.
In particular, for the purpose of finding a sparse solution, the $\ell_p$-regularizer with $0<p<1$ is reported to be effective in wide applications such as matrix completion\,\citep{marjanovic2012l_q}, de-noising\,\citep{marjanovic2013exact}, compressing sensing\,\citep{zheng2016lp,wen2016robust,weng2016phase}, CT (computed tomography) reconstruction\,\citep{miao2016alternating}, machine learning\,\citep{xu2012l_} and so on.
Refer to the survey article\,\citep{wen2018survey} concerning nonconvex regularizers including the $\ell_p$-regularizer, and also see references therein.
In spite of plenty of researches supporting the efficiency of the $\ell_p$-regularizer, there exist fewer studies on bilevel optimization approaches to
hyperparameter learning or optimization of this regularizer.
A possible specific reason is that tractable optimality conditions for the arising bilevel problem have not been developed yet because of the $\ell_p$-regularizer's nonsmoothness or high nonconvexity when $p<1$.
Moreover, there are no practical algorithms ensured of convergence to a meaningful point for that nonsmooth bilevel problem.

Our contribution in this paper is the proposal of an algorithm with a theoretical convergence guarantee for solving an $\ell_p$-hyperparameter optimization problem, i.e., the problem of finding the best hyperparameter of an $\ell_p$-regularized supervised learning problem. 
Specifically, we first formulate it as a bilevel optimization problem with a nonsmooth and nonconvex lower-level problem having the $\ell_p$-regularizer. Since no optimality conditions have been explored adequately for such bilevel problems so far, we develop new optimality conditions, named scaled bilevel KKT (SB-KKT) conditions.
As a matter of fact, the SB-KKT conditions can be cast as an extension of the scaled first-order optimality conditions for some class of non-Lipschitz optimization problems originally given in
\citep{chen2010lower,chen2013optimality,bian2017optimality}.
We prove that these conditions are nothing but necessary optimality conditions for the one-level optimization problem acquired by replacing the lower-level problem with
its scaled first-order optimality conditions.
We moreover propose an iterative algorithm for solving the bilevel optimization problem with a nonsmooth and nonconvex lower-level problem.
One natural way for tackling such a problem would be formulating it as a one-level problem by replacing its lower-level-problem constraints with the first-order optimality condition formed
by the subdifferential (i.e., the set of subgradients) of the $\ell_p$-regularizer.
However, it is still nontrivial how we solve the resulting one-level problem with point-to-set mapping constraints.
To avoid this difficulty, we apply a smoothing technique for the $\ell_p$-regularizer, which enables us to make use of a gradient of the smoothed regularizer.
As a result, we obtain a one-level problem whose constraints are represented in terms of only smooth equations and inequalities.
In the presented smoothing algorithm, we generate a sequence of
KKT solutions of the smoothed problems while we control the degree of smoothing approximation.
     We will prove that a sequence generated by this algorithm accumulates at a point satisfying the SB-KKT conditions under some mild assumptions.
     Numerical experiments support the scalability of our algorithm compared to Bayesian optimization and grid search.
Finally, we discuss extension of the SB-KKT conditions and the proposed algorithm to other regularizers such as SCAD and MCP.
\subsection{Related Work on Bilevel Approach}
Most existing bilevel optimization models assume convexity and/or smoothness for all functions or at least once differentiability
for the lower-level objective functions.
If it is not once differentiable, we need to overcome the difficulty of selecting a subgradient to guarantee descent of the upper-level gradient
when solving such a problem.

\paragraph{Bilevel Formulations for Hyperparameter Opt.}
There are no existing works on bilevel hyperparameter optimization approach for our model and
existing works
are restricted to smooth and convex machine learning models.
A pioneer work in the line was by \citet{bennett+06,bennett+08}. They formulated
the selection technique of cross-validation for support vector regression as a bilevel optimization problem,
equivalently transformed it into a one-level nonconvex optimization problem whose constraints are the Karush-Kuhn-Tucker (KKT) optimality conditions of the lower-level problem and proposed two approaches to solve the nonconvex problem.
\citet{Moore2009,Moore2011} gave a bilevel optimization formulation for a nonsmooth and convex machine learning model, support vector regression (SVR), while
their proposed algorithms assume that the lower-level objective functions are at least once differentiable.
\cite{pmlr-v48-pedregosa16} gave a bilevel optimization formulation for more general supervised learning problems,
but the assumption of differentiability has been still imposed for  all functions.
More recently, \citet{franceschi2018bilevel} gave a unified bilevel perspective on hyperparameter optimization and meta learning, and presented a gradient-based algorithm using automatic differentiation techniques, which assumes the smoothness of the lower-level objective function.
\paragraph{Bilevel Optimization Algorithms}
As far as we investigated, the convergence analysis for bilevel problems with nonconvex nonsmooth regularizers has not been studied before. 
Many studies on bilevel optimization in optimization community
transform bilevel optimization problems into the one-level formulations
via the first-order optimality conditions
  for lower-level problems by assuming the differentiability of the functions, and focus on investigating theoretical properties for constraint qualifications and optimality conditions (see, for example,
\citep{ye1995optimality,dempe2006optimality,dempe2011generalized,Dempe2013,dempe2015bilevel}).
Recently, \cite{2016arXiv160207080O} proposed techniques for solving bilevel optimization problems with non-smooth ``convex'' lower level problems. 
They considered a gradient-based method for the optimization problem obtained by substituting a smoothly approximated solution mapping of the lower-level problem
  into the upper level problem.
  However, theoretical analysis concerning the limiting behavior of the derivatives of the approximated solution mappings was left to future work and the proposed method was written to be heuristic in the paper.
\cite{KunischPock2013} and \cite{Rosset2009} considered bilevel optimization problems
having the $\ell_p$-regularizer, which are similar to our problem, but the $p$ was mainly restricted to $1$ or $2$. 
Especially, the case of $p=0.5$ only appears in the numerical experiments in \cite{KunischPock2013} without any theoretical support, though
some convergence analysis is shown for the semismooth Newton algorithms for the case of $p=1$.

Another stream of bilevel algorithms is based on the reformulation
  as one-level problem by replacing the lower-level problem with a dynamical system, which arises in an iterative algorithm such as proximal gradient-type methods for solving the lower-level problem.
  The approach is employed for hyperparameter optimization in e.g., \citep{lorraine2020optimizing,franceschi2017forward,franceschi2018bilevel,maclaurin2015gradient,shaban2019truncated}. In their theoretical analysis, nonconvex and nonsmooth functions, which we will handle in this paper, are not supposed to be contained by the lower-level objective one.

\paragraph{Notations.}
In this paper, we often 
{denote a vector ${\bm z}\in \R^d$ by ${\bm z}=(z_1,z_2,\ldots,z_d)^{\top}$ and write} 
$\lim_{\ell\in L\to \infty}{\bm z}^{\ell}={\bm z}^{\ast}$ to represent that,
given a sequence $\{{\bm z}^{\ell}\}$, a subsequence $\{{\bm z}^{\ell}\}_{\ell\in L}$ with $L\subseteq\{1,2,\ldots,\}$
converges to ${\bm z}^{\ast}$.
The $\ell$-th vector ${\bm z}^{\ell} \in  \R^{d}$ is often represented as 
${\bm z}^{\ell}:=(z^\ell_1,z^\ell_2,\ldots,z_d^\ell)^{\top}$. 
We also denote 
the 
$d$-dimensional non-negative (positive) orthants by 
$\R_{+(++)}^d:=\{{\bm z}\in \R^d\mid z_i\ge(>)0\ (i=1,2,\ldots,d)\}$.
{For a set of vectors $\{\bm v_{i}\}_{i\in I}\subseteq \R^{m}$ with $I:=\{i_1,i_2,\ldots,i_p\}$,
we define $(\bm v_i)_{i\in I}:=(\bm v_{i_1},\bm v_{i_2},\ldots,\bm v_{i_p})\in \R^{m\times p}$.
}
We denote the sign function by 
${\rm sgn}:\R\to \{-1,0,+1\}$, i.e., 
${\rm sgn}(x):= 1~  (x>0)$, $0~ (x=0)$, and $-1~ (x<0)$ for any $x\in \R$.

{For a differentiable function $h:\R^n\to \R$, we denote the gradient function from $\R^n$ to $\R^n$ 
by $\nabla h$, i.e., $\nabla h({\bm x}):=(\frac{\partial h(\bm x)}{\partial x_1},\ldots,\frac{\partial h(\bm x)}{\partial x_n})^{\top}\in \R^n$ for $\bm x\in \R^n$, where $\frac{\partial h(\bm x)}{\partial x_i}$ stands for the partial differential of $h$ with respect to $x_i$ for $i=1,2,\ldots,n$. To express the gradient of $h$ with respect to a sub-vector $\tilde{\bm x}:=(x_i)_{i\in I}^{\top}$ of $\bm x$ with $I:=\{i_1,i_2,\ldots,i_p\}\subseteq \{1,2,\ldots,n\}$, we write $\nabla_{\tilde{\bm x}}h(\bm x):=
\left(\frac{\partial h(\bm x)}{\partial x_{i_1}},\frac{\partial h(\bm x)}{\partial x_{i_2}},\ldots,\frac{\partial h(\bm x)}{\partial x_{i_p}}\right)^{\top}\in \R^{|I|}$.
We often write $\nabla g(\bm x)|_{\bm x=\bar{\bm x}}$ $(\nabla_{\tilde{\bm x}} g(\bm x)|_{\bm x=\bar{\bm x}})$ or $\nabla h(\bar{\bm x})$ 
($\nabla_{\tilde{\bm x}} h(\bar{\bm x})$)
to represent the (partial) gradient value of $g$ at $\bm x=\bar{\bm x}$. Moreover, when $h$ is twice differentiable, 
we denote the Hessian of $h$ by $\nabla^2h:\R^n\to \R^{n\times n}$, i.e., $\nabla^2h(\bm x):=\left(\frac{\partial^2h(\bm x)}{\partial x_i\partial x_j}\right)_{1\le i,j\le n}\in \R^{n\times n}$. 

{\redcolor
  \paragraph{Organization of the Paper}
  The rest of this paper is organized as follows: In Section~\ref{sec:2}, we describe our problem setting precisely.
  In Section~\ref{sec:smooth}, we propose a smoothing algorithm for solving the targeted problem.
  In Section~\ref{sec:theory}, we present new necessary optimality conditions of the problem, already refereed to as the SB-KKT conditions.
  We also conduct the convergence analysis of the proposed smoothing algorithm.
  In Section~\ref{sec:numeric}, we examine the efficiency of the proposed algorithm by means of numerical experiments using real data sets.
  In Section~\ref{sec:discuss}, we discuss extension of the proposed algorithm to other classes of problems.
  Finally, in Section~\ref{sec:7}, we conclude this paper.
  In Appendix, we provide some proofs omitted in the main part together with other supplementary materials. 
}
\section{Formulation}\label{sec:2}
We consider the following {bilevel optimization problem with a nonsmooth, possibly nonconvex, lower-level problem}:
\begin{equation}
 \min_{\w^*_{\bm\lambda},\blambda}\ f(\w^*_{\bm\lambda})\hspace{0.5em}\mbox{s.t.}\hspace{0.5em}\displaystyle
  \w^*_{\bm\lambda} \in \argmin_{\bm w \in \R^n}\left(g(\bm w)+\sum_{i=1}^r \lambda_i R_i(\bm w)\right),\ {\bm \lambda \geq \bm 0}.
\label{eqn:bilevel}
\end{equation}
Suppose that $f:\R^n\to \R$ is once continuously differentiable,
$\blambda:=(\lambda_1,\lambda_2,\ldots,\lambda_r)^{\top}\in \R^r$,
$R_1(\bm w):=\|\bm w\|_p^p=\sum_{i=1}^n|w_i|^p$ ($0 < p \leq 1$), and
{the functions} $R_2,\cdots,R_r$, and $g$ are twice continuously differentiable functions.
We call the whole problem\,\eqref{eqn:bilevel} and $\min_{\w\in \R^n}g(\w)+\sum_{i=1}^r\lambda_iR_i(\w)$
the upper- and lower-level problem, respectively.
To make our notation simple, we often use
the function
\[G(\bm w,\bar{\bm \lambda}):=g(\bm w)+\sum_{i=2}^r \lambda_i R_i(\bm w),
\]
{with $\bar{\blambda}:=(\lambda_2,\ldots,\lambda_r)^{\top}\in \R^{r-1}$}
for expressing 
the lower-level problem as
\[
\min_{\bm w \in \R^n} G(\bm w,\bar{\bm \lambda})+\lambda_1 R_1(\bm w).
\]
Note that the function $R_1$ is nonconvex when $p<1$ and nonsmooth, though some differentiability is assumed
for other terms.
\tred{
  We also remark 
  that the proposed smoothing algorithm can be tailored to problems that contain multiple nonsmooth regularizers as long as suitable smoothing functions (see Section\,\ref{sec:smooth} for the definition) are found.
  Nonetheless, it is unclear whether the theoretical analysis can be established under such a setting. 
}
\subsection{Examples of Functions $g$, $\sum_{i=1}^r \lambda_i R_i$, and $f$}
When using the following loss function as the function $g$:
\begin{itemize}
\item $g(\bm w)=\sum_{i=1}^{m_{tr}} (\tilde{y}_i-\tilde{\bm x}_i^\top \bm w)^2$  for training samples $(\tilde{y}_i, \tilde{\bm x}_i) \in \R\times \R^n, i=1,\cdots,m_{tr}$
\item $g(\bm w)=\sum_{i=1}^{m_{tr}} \log(1+\exp(-\tilde{y}_i\tilde{\bm x}_i^\top \bm w ))$ 
for  training samples $(\tilde{y}_i, \tilde{\bm x}_i) \in \{+1,-1\}\times \R^n, i=1,\cdots,m_{tr}$,
\end{itemize}
the lower-level optimization problem in \eqref{eqn:bilevel} corresponds to  minimizing the
{$\ell_2$-loss function for regression}
and the {logistic-loss function for  binary classification}, respectively, combined with some regularization
including $\|\bm w\|_p^p$ for a given hyperparameter vector $\bm \lambda$.
This type of problem whose regularizer includes $\|\bm w\|_p^p$ is called a sparse optimization problem.
Various well-known sparse regularizers can be expressed by $\sum_{i=1}^r \lambda_i R_i(\bm w)$.
For example, 
\begin{itemize}
\item[\hspace*{3mm} $\star$]
  $\ell_1$ regularizer: $\lambda_1 \|\bm w\|_1$,
\item[\hspace*{3mm} $\star$]
  elastic net regularizer: $\lambda_1 \|\bm w\|_1+ \lambda_2 \|\bm w\|_2^2$, 
\item[\hspace*{3mm} $\star$]
  nonconvex regularizer: $\lambda_1 \|\bm w\|_q^q$ with $0 < q < 1$.
\end{itemize}

What we want to do is to find the best hyperparameter values of $\bm \lambda$ which lead to small validation error.
The upper-level problem can find such values for $\bm \lambda$.
By setting the same loss function with $g$ for $f$ but defined by validation samples  
$(\hat{y}_j, \hat{\bm x}_j)$, $j=1,\cdots,m_{\rm val}$,
the upper-level problem
finds the best hyperparameter values which
minimize the validation error, which is defined by 
$f(\bm w)=\sum_{i=1}^{m_{\rm val}} (\hat{y}_i-\hat{\bm x}_i^\top \bm w)^2$ for the $\ell_2$-loss or
  $f(\bm w)=\sum_{i=1}^{m_{\rm val}} \log(1+\exp(-\hat{y}_i\hat{\bm x}_i^\top \bm w ))$ for the logistic-loss.
\section{Smoothing Method for Nonconvex Nonsmooth Bilevel Program}\label{sec:smooth}
For problem \eqref{eqn:bilevel}, one may think of the one-level problem obtained by replacing the lower problem constraint with its first-order optimality condition \citep[10.1~Theorem]{rockafellar2009variational} represented in terms of (general) subgradient%
\footnote{{For precise definitions of a subgradient of a nonconvex function, see Appendix\,\ref{appendix:lemma} in this paper or Chapter~8 of \citep{rockafellar2009variational}.
}}, i.e., 
\begin{equation}
\min_{\w,\blambda}\ f(\w)\hspace{0.5em}\mbox{s.t.}\hspace{0.5em}
\bm 0\in \partial_{\w}(G(\w,\bar{\blambda})+\lambda_1R_1(\w)),\ \blambda \geq \bm 0.\label{eq:0615-2056}
\end{equation}
Notice that $G(\w,\bar{\blambda})+\lambda_1R_1(\w)$ is not convex with respect to $\w$ generally. Hence,
the feasible region of \eqref{eq:0615-2056} can be larger than
that of the original problem \eqref{eqn:bilevel} because not only the global optimal solutions of the lower-level problem but also its local optimal solutions are feasible
solutions for
\eqref{eq:0615-2056}. In that sense, problem \eqref{eq:0615-2056} is modified from the original one, but solving it may lead to better prediction performance because the best hyperparameter $\lambda$ is searched in the wider space
and above all, there is no way to solve the bilevel optimization problem \eqref{eqn:bilevel} as it is.
\subsection{Smoothing method}
In our approach for tackling problem\,\eqref{eqn:bilevel}, we will utilize the smoothing method, which is one of the most powerful methodologies developed for solving nonsmooth equations, nonsmooth optimization problems, and so on. 
Fundamentally, the smoothing method solves smoothed optimization problems 
or equations sequentially to produce a sequence converging to a point that satisfies some optimality conditions of the original nonsmooth problem. 
The smoothed problems solved therein are obtained by replacing the nonsmooth functions with so-called smoothing functions.

Let $\bphi_0:\R^n\to \R$ be a 
nonsmooth function. 
Then, we say that $\bphi:\R^n\times \R_+\to \R$ is a smoothing function of $\bphi_0$ when (i) $\bphi(\cdot,\cdot)$ is continuous and $\bphi(\cdot,\mu)$ is continuously differentiable for any $\mu>0$; 
(ii) $\lim_{\tilde{\w}\to \w,\mu\to 0+}\bphi(\tilde{\w},\mu)=\bphi_0(\w)$ for any $\w\in \R^n$.
In particular, we call $\mu\ge 0$ a smoothing parameter.
{For more details on smoothing methods, see the comprehensive survey article\,\citep{chen2012smoothing} and also relevant articles \citep{nesterov2005smooth,beck2012smoothing}.
  \subsection{Our approach}
We propose a smoothing based method for solving \eqref{eqn:bilevel}. 
In the method, we replace the nonsmooth, possibly nonconvex, term  $R_1(\bm w)=\|\bm w\|_p^p$ in \eqref{eqn:bilevel}
by the following smoothing function:
\[
\varphi_\mu(\bm w):=\sum_{i=1}^n (w_i^2+\,\mu^2)^{\frac{p}{2}}.
\]
We then have the following bilevel problem approximating the original one\,\eqref{eqn:bilevel}:
\begin{equation}
\min_{\w^*_{\bm\lambda},\blambda}\ f({\w^*_{\bm\lambda}})\hspace{0.5em}\mbox{s.t.}\hspace{0.5em}\displaystyle {\w^*_{\bm\lambda}} \in \argmin_{\bm w \in \R^n}\left(G(\bm w,\bar{\blambda})+\lambda_1\varphi_{\mu}(\w)\right),\ \bm \lambda \geq \bm 0\notag
\end{equation}
which {naturally} leads 
to the following one-level problem:
\begin{align}
\hspace{-2em}\begin{array}{rcc}
\displaystyle \min_{\bm w,\bm \lambda} && f(\bm  w) \\
\mbox{s.t.} && \displaystyle\nabla_{\w} G(\bm w,\bar{\bm \lambda})+ \lambda_1 \nabla \varphi_{\mu}(\bm w)  = \bm 0\\
&& \blambda\ge \bm 0.
\end{array}\label{eqn:smoothprob}
\end{align}
{Note that problem\,\eqref{eqn:smoothprob} is smooth since 
the function $\varphi_{\mu}$ is twice continuously differentiable
\footnote{Huber's function\,\citep{beck2012smoothing} is a popular smoothing function of $R_1(\cdot)$,
but is not twice continuously differentiable.}
when $\mu\neq 0$.
Hence, we can consider the Karush-Kuhn-Tucker (KKT) conditions for this problem.

Let us explain the proposed method in detail.
To this end, for a parameter $\hat{\varepsilon}>0$, we define an $\hat{\varepsilon}$-approximate KKT point 
for problem\,\eqref{eqn:smoothprob}.
We say that 
$(\w,\blambda,\bzeta,\bbeta)\in \R^n\times \R^r\times \R^n\times \R^r$ is 
an $\hat{\varepsilon}$-approximate KKT point
for \eqref{eqn:smoothprob}
if there exists a vector 
$(\bm\varepsilon_1,\varepsilon_2,{\bm \varepsilon}_3,{\bm\varepsilon}_4,\varepsilon_5)\in \R^{n}\times\R\times \R^{r-1}\times \R^n\times \R$ such that
\begin{eqnarray}
&\hspace{-3em}\nabla f(\bm  w) +\left(\nabla_{\w\w}^2 G(\w,\bar{\blambda}) + 
\lambda_1 \nabla^2\bphi_{\mu}(\w)\right){\bm\zeta}=\bm \varepsilon_1,
\label{eq:1215-1}
\\
&\nabla\bphi_{\mu}(\w)^{\top}\bzeta-\eta_1=\varepsilon_2,\label{eqn:1217-1}\\
&\nabla R_i(\w)^{\top}\bzeta-\eta_i=(\bm \varepsilon_3)_i\ \ (i=2,3,\ldots,r),\label{eqn:1217-2}\\
&\nabla_{\w} G(\w,\bar{\blambda})+\lambda_1\nabla\bphi_{\mu}(\w)=\bm \varepsilon_4,\label{eq:1215-2}\\
& \bm 0\le \blambda, ~ \bm 0 \le \bbeta, ~ \blambda^\top \bbeta=\varepsilon_5, \label{eq:1215-3}
\end{eqnarray}
and 
{$$
\|(\bm\varepsilon_1,\varepsilon_2,{\bm \varepsilon}_3,{\bm\varepsilon}_4,\varepsilon_5)\|\le \hat{\varepsilon},$$}
{where $\nabla_{\w\w}^2 G(\w,\bar{\blambda})$ is the Hessian of $G$ with respect to $\w$.}
{\begin{algorithm}[tb]
\caption{Smoothing Method for Nonsmooth Bilevel Program}
\label{alg:smoothing}
\begin{algorithmic}[1]
\REQUIRE
Choose $\mu_0\neq 0$, ${\beta_1,\beta_2} \in (0,1)$ and $\hat{\varepsilon}_0\ge 0$.
Set $k \leftarrow0$.
\REPEAT 
\STATE
{Find an $\hat{\varepsilon}_k$-approximate KKT point {$(\w^{k+1}, \blambda^{k+1},\bzeta^{k+1},\bbeta^{k+1})$} 
for problem\,\eqref{eqn:smoothprob} with $\mu=\mu_k$.}
\STATE
{Update the smoothing and error parameters by $\mu_{k+1} \leftarrow{\beta_1} \mu_k$
and 
$\hat{\varepsilon}_{k+1} \leftarrow {\beta_2} \hat{\varepsilon}_k$.
} 
\STATE $k \leftarrow k+1$.
\UNTIL {convergence of $(\w^k, \blambda^k,\bzeta^{k},\bbeta^{k})$.} 
\end{algorithmic}
\end{algorithm}}
Notice that an $\hat{\varepsilon}$-approximate KKT point is nothing but a KKT point%
\footnote{Note that 
\eqref{eqn:1217-1} and \eqref{eqn:1217-2} with $(\varepsilon_2,({\bm \varepsilon}_3)_2,\ldots,
({\bm \varepsilon}_3)_r)=\bm 0$
can be obtained from 
$$
\nabla_{\blambda}f(\w)+\nabla_{\blambda}
\left(
\left(
\nabla_{\w}
G(\w,\bar{\blambda})+\lambda_1\nabla\varphi_{\mu}(\w)\right)^{\top}\bzeta\right)-\bbeta = \0.
$$
}
for problem\,\eqref{eqn:smoothprob} if $\hat{\varepsilon}=0$.
Hence, }
$\bzeta\in \R^{n}$ and $\bbeta\in \R^r$
are {regarded as approximate} Lagrange multiplier vectors corresponding to 
the equality constraint 
$\nabla_{\w}G(\w,\bar{\blambda})+\lambda_1 \nabla\bphi_{\mu}(\w)=\bm 0$
and the inequality constraints $\blambda\ge \bm 0$, respectively.
{The proposed algorithm produces a sequence of $\hat{\varepsilon}$-approximate KKT points 
for problem\,\eqref{eqn:smoothprob} while decreasing the values of $\hat{\varepsilon}$ and $\mu$ to $0$.
Precisely, it is described as in Algorithm\,\ref{alg:smoothing}.}

Though, in Algorithm\,\ref{alg:smoothing}, we do not designate any means for computing an $\hat{\varepsilon}$-approximate KKT point for \eqref{eqn:smoothprob},
  sequential quadratic programming (SQP) methods\,\citep{nocedal2006numerical} are promising candidates.
  However, since such SQP methods are designed for solving general constrained problems, we may develop more efficient algorithms by exploiting structure of individual problems.
  This issue will be discussed later in Section~\ref{sec:numeric}.
    See also Appendix\,\ref{appendix:B}.
As for practical stopping criteria of Algorithm~1, we make use of the scaled bilevel SB-KKT conditions studied in the subsequent section.

\section{Theoretical Results}\label{sec:theory}
In this section, we will prove the global convergence of Algorithm\,\ref{alg:smoothing} by investigating an accumulation point of a sequence generated by that algorithm.
  For this purpose, in Section\,\ref{SB-KKT}, we first present new optimality conditions for the original bilevel problem\,\eqref{eqn:bilevel}, named {\it scaled bilevel KKT (SB-KKT)} conditions.
Moreover, in Section~\ref{sec:convtoSB}, we prove that any accumulation point of a sequence generated by Algorithm\,\ref{alg:smoothing} actually satisfies the SB-KKT conditions under some assumptions.


Throughout the section, we often use the following notations for $\w\in \R^n$:
\begin{align*}
I(\w):=\{i\in \{1,2,\ldots,n\}\mid w_i=0\},\ |\bw|^{p}:=\left(|w_1|^p,|w_2|^p,\ldots,|w_n|^p\right)^{\top}.
\end{align*}
\subsection{SB-KKT Conditions}\label{SB-KKT}
Now, we define the SB-KKT conditions for problem\,\eqref{eqn:bilevel}:
\begin{definition}
We say that the scaled bilevel 
Karush-Kuhn-Tucker (SB-KKT) conditions hold at $(\w^{\ast},\blambda^{\ast})\in \R^n\times \R^r$ 
for problem\,\eqref{eqn:bilevel}
when there {exists a pair of vectors} $(\bzeta^{\ast},\bbeta^{\ast})\in \R^n\times\R^r$ such that
\begin{eqnarray}
  &\bW_{\ast}^2\nabla f(\w^{\ast})+\H(\w^{\ast},\blambda^{\ast})\bzeta^{\ast}=\0,\label{eqa:1214-1}\\
&\bW_{\ast}\nabla_{\w}G({\w}^{\ast},\bar{\blambda}^{\ast})+p\lambda_1^{\ast}|\bw^{\ast}|^p=\0,\label{eqa:1214-2}\\
& p\sum_{i\notin I(\w^{\ast})}{\rm sgn}(w_i^{\ast})|w_i^{\ast}|^{p-1}\zeta_i^{\ast}=\eta_1^{\ast},\label{eqa:1214-3}\\
&\zeta_i^{\ast}=0\ (i\in I(\w^{\ast})),\label{eqa:1214-6}\\
&\nabla R_i(\w^{\ast})^{\top}\bzeta^{\ast}=\eta_i^{\ast}\ \ (i=2,3,\ldots,r),\label{eqa:1224-4}\\ 
&\0\le \blambda^{\ast}, ~\0 \le \bbeta^{\ast}, ~(\blambda^{\ast})^\top \bbeta^{\ast}=0,\label{eqa:1214-5}
\end{eqnarray}
where 
$\bW_{\ast}:={\rm diag}(\bw^{\ast})$.
Here, we write 
$$
\H(\w,\blambda):=\bW^2\nabla^2_{\bw}G(\w,\bar{\blambda})+\lambda_1p(p-1)\diag(|\bw|^p)
$$ 
{with $\bW:={\rm diag}(\bw)$} {for $\w\in \R^n$ and $\blambda\in \R^r$.}
In particular, we call 
{a point $(\bw^{\ast},\blambda^{\ast})\in \R^n\times \R^r$}
satisfying the above conditions\,\eqref{eqa:1214-1}--\eqref{eqa:1214-5} an SB-KKT point for problem\,\eqref{eqn:bilevel}.
\end{definition}

In fact, $(\0,\blambda^{\ast})$ is a trivial SB-KKT point for any $\blambda^{\ast}$.
  This can be checked by setting $(\bzeta^{\ast},\bbeta^{\ast})=(\0,\0)$ in the conditions\,\eqref{eqa:1214-1}--\eqref{eqa:1214-5}.
  Experimentally, started from a point apart from such a trivial point, Algorithm~\ref{alg:smoothing} finds non-trivial SB-KKT points in many cases.

{We next prove that the SB-KKT conditions are necessary optimality conditions 
for a certain one-level problem. For this purpose, we derive the {\it scaled first-order necessary condition}\,\citep{chen2010lower} for the lower-level problem in \eqref{eqn:bilevel}:
\begin{equation}
\min_{\bm w \in \R^n}G(\w,\bar{\blambda})+\lambda_1\|\w\|^p_p.\label{eq:low}
\end{equation}
{We say that 
the scaled first-order necessary condition of \eqref{eq:low} holds at $\w^{\ast}$ if}
\begin{equation}
  \bW_{\ast}\nabla_{\w}G(\w^{\ast},\bar{\blambda})+p\lambda_1|\bw^{\ast}|^p= \bm 0.\label{eq:scaled_first}
\end{equation}
Indeed, a local optimum $\w^{\ast}$ of \eqref{eq:low} satisfies the above condition.
This fact can be verified easily by following the proof of Theorem~2.1 in \citep{chen2010lower}.
The above scaled condition was originally presented in
\citep{chen2010lower,chen2013optimality,bian2017optimality} for some optimization problems admitting non-Lipschitz functions.

As in deriving \eqref{eq:0615-2056},
we obtain the following one-level problem by replacing the lower problem in \eqref{eqn:bilevel} with
the scaled first-order necessary condition \eqref{eq:scaled_first}:
\begin{equation}
\displaystyle\min_{\w,\blambda}\ f(\w)\hspace{0.5em}\mbox{s.t.}\hspace{0.5em}\bW \nabla_{\w}G(\w,\bar{\blambda})+p\lambda_1|\bw|^p= \bm 0,\ \blambda\ge \0.
\label{eqn:surrogate}
\end{equation}
As well as \eqref{eq:0615-2056}, the feasible region of \eqref{eqn:surrogate} includes not only 
the global optimal solution of the lower-level problem in the original problem \eqref{eqn:bilevel} but also its local solutions.
Notice that the above problem is still nonsmooth due to the existence of $|\bw|^p$. 

The following theorem states that the SB-KKT conditions are necessary optimality conditions for \eqref{eqn:surrogate}.}
Here, we just give an outline of the proof and defer its detail to Appendix~\ref{appendix:kkt}.\\
\begin{theorem}\label{thm:optimality}
Let $(\w^{\ast},\blambda^{\ast})\in \R^n\times \R^r$ be a local optimum of 
\eqref{eqn:surrogate}.
Then,
$(\w^{\ast},\blambda^{\ast})$ together with some vectors
$\bzeta^{\ast}\in \R^n$ and $\bbeta^{\ast}\in\R^r$ satisfies the 
SB-KKT conditions\,\eqref{eqa:1214-1}--\eqref{eqa:1214-5}
under an appropriate constraint qualification concerning the constraints 
$\frac{\partial{G(\w,\bar{\blambda})}}{\partial w_i}+p\,{\rm sgn}(w_i)\lambda_1|w_i|^{p-1}=0\ (i\notin I(\w^{\ast})),\ w_i=0\ (i\in I(\w^{\ast}))${, and $\blambda\ge \bm 0$}.
\end{theorem}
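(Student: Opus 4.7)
The plan is to recast problem \eqref{eqn:surrogate} locally around $(\bw^\ast,\blambda^\ast)$ as a smooth nonlinear program and then apply the classical KKT theorem. For $i\notin I(\bw^\ast)$ we have $w_i^\ast\ne 0$, so by continuity $w_i\ne 0$ and ${\rm sgn}(w_i)={\rm sgn}(w_i^\ast)$ in a neighbourhood of $\bw^\ast$. Dividing the $i$th component of the nonsmooth equality $\bW\nabla_{\bw}G+p\lambda_1|\bw|^p=\bm 0$ by $w_i$ turns it into the $C^2$ equation $\frac{\partial G}{\partial w_i}+p\,{\rm sgn}(w_i^\ast)\lambda_1|w_i|^{p-1}=0$. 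For $i\in I(\bw^\ast)$ I would \emph{add} the auxiliary equality $w_i=0$. The resulting ``restricted smooth problem'' has a feasible set that is locally contained in that of \eqref{eqn:surrogate}, yet still contains $(\bw^\ast,\blambda^\ast)$, so $(\bw^\ast,\blambda^\ast)$ remains a local minimiser there, and all its constraints are now twice continuously differentiable at $(\bw^\ast,\blambda^\ast)$.

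Under the constraint qualification assumed in the theorem (which is precisely the CQ for these reformulated constraints), classical KKT theory supplies multipliers $\zeta_i^\ast$ ($i\notin I(\bw^\ast)$), $\alpha_i$ ($i\in I(\bw^\ast)$), and $\bbeta^\ast\ge\bm 0$ with $(\blambda^\ast)^\top\bbeta^\ast=0$. I would extend $\bzeta^\ast\in\R^n$ by setting $\zeta_i^\ast:=0$ for $i\in I(\bw^\ast)$, which yields \eqref{eqa:1214-6} immediately; primal feasibility of the restricted problem is exactly \eqref{eqa:1214-2}, and the sign/complementarity on $\blambda$ yields \eqref{eqa:1214-5}. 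Differentiating the Lagrangian in $\lambda_1$ and in $\lambda_k$ ($k\ge 2$), and noting that $\bar\blambda$ enters $\nabla_{\bw}G$ linearly through $R_2,\dots,R_r$, one obtains \eqref{eqa:1214-3} and \eqref{eqa:1224-4} with $\bm\eta^\ast=\bbeta^\ast$; contributions indexed by $i\in I(\bw^\ast)$ drop out by the extension.

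The one delicate step is the $\bw$-stationarity. For $j\notin I(\bw^\ast)$, the classical stationarity reads $\frac{\partial f}{\partial w_j}+\sum_{i\notin I(\bw^\ast)}\zeta_i^\ast\frac{\partial^2 G}{\partial w_i\partial w_j}+p(p-1)\lambda_1^\ast|w_j^\ast|^{p-2}\zeta_j^\ast=0$. Multiplying by $(w_j^\ast)^2$ and using the identity $(w_j^\ast)^2|w_j^\ast|^{p-2}=|w_j^\ast|^p$ (valid because $w_j^\ast\ne 0$), together with $\zeta_i^\ast=0$ for $i\in I(\bw^\ast)$, recovers precisely the $j$th row of \eqref{eqa:1214-1} with the block $H(\bw^\ast,\blambda^\ast)$. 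For $j\in I(\bw^\ast)$, the $j$th row of \eqref{eqa:1214-1} collapses to $0=0$ automatically, since $w_j^\ast=0$ kills both $\bW_\ast^2\nabla f$ and the $j$th row of $H$; the associated multiplier $\alpha_j$ absorbs the slack and plays no role in the SB-KKT system.

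The principal obstacle I foresee is this scaling bookkeeping: one must verify that left-multiplication by $\bW_\ast^2$ converts the classical Hessian block (carrying the singular exponent $p-2$) into $H(\bw^\ast,\blambda^\ast)$ (carrying $p$), and that the apparent blow-up of $|w_i|^{p-2}$ at $i\in I(\bw^\ast)$ is exactly cancelled by the vanishing scaling factor $w_i^\ast=0$. Once this is in place, the identification of the restricted KKT multipliers with $(\bzeta^\ast,\bbeta^\ast)$ and hence the full list \eqref{eqa:1214-1}--\eqref{eqa:1214-5} is routine.
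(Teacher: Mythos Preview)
Your proposal is correct and follows essentially the same route as the paper's proof: introduce the restricted smooth problem with constraints $\frac{\partial G}{\partial w_i}+p\,{\rm sgn}(w_i)\lambda_1|w_i|^{p-1}=0$ for $i\notin I(\bw^\ast)$, $w_i=0$ for $i\in I(\bw^\ast)$, and $\blambda\ge\bm 0$; observe that its feasible set is contained in that of \eqref{eqn:surrogate} so $(\bw^\ast,\blambda^\ast)$ remains a local minimiser; apply the classical KKT theorem under the stated CQ; extend the multipliers by setting $\zeta_i^\ast=0$ for $i\in I(\bw^\ast)$; and finally multiply the $\bw$-stationarity equations by $(w_j^\ast)^2$ to recover \eqref{eqa:1214-1}. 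The paper carries out exactly these steps, with the multipliers you call $\alpha_i$ denoted $\hat\zeta_i^\ast$ (for $i\in I(\bw^\ast)$) and likewise discarded.
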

    {\noindent{\bf{Sketch of the proof}}: Notice that $(\w^{\ast},\blambda^{\ast})$ is a local optimum of the following problem:
\begin{align}\label{eqn:surrogate2}
\begin{array}{rcc}
\displaystyle\min_{\w,\blambda}&  & f(\w)  \\
\mbox{s.t.} &     &\displaystyle \frac{\partial{G(\w,\bar{\blambda})}}{\partial w_i}+p\,{\rm sgn}(w_i)\lambda_1|w_i|^{p-1}=0\ (i\notin I(\w^{\ast}))\\
            &      &w_i=0\ (i\in I(\w^{\ast}))\\  
           &       &\blambda\ge \0.
\end{array}
\end{align}
This is because  
$(\w^{\ast},\blambda^{\ast})$ is also feasible to \eqref{eqn:surrogate2} and the feasible region of 
\eqref{eqn:surrogate} is larger than that of \eqref{eqn:surrogate2}.
Hence, the KKT conditions hold at $(\w^{\ast},\blambda^{\ast})$ for \eqref{eqn:surrogate2} in the presence of a constraint qualification.
Finally, these KKT conditions can be equivalently transformed into the desired SB-KKT conditions.}
\hfill$\blacksquare$

In the next section, we will study convergence analysis of Algorithm~\ref{alg:smoothing} to an SB-KKT point.
Before proceeding to the convergence analysis, let us see the relationship between the two one-level problems
\,\eqref{eq:0615-2056} and \eqref{eqn:surrogate}. 
The following lemma concerns the feasible regions of \eqref{eq:0615-2056} and \eqref{eqn:surrogate}.
\begin{LEMM}\label{lem:0708-2352}
For $\w\in \R^n$ and $\blambda\in \R^r_+$, 
if 
$\bm 0\in \partial_{\w}(G(\w,\bar{\blambda})+\lambda_1R_1(\w))$,
then  $\bW \nabla_{\w}G(\w,\bar{\blambda})+p\lambda_1|\bw|^p= \bm 0$. 
In particular, when $p<1$, the converse is also true.
\end{LEMM}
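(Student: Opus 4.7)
The plan is to reduce both implications to a componentwise analysis that exploits the separability of $R_1(\w)=\sum_{i=1}^n|w_i|^p$ together with the smoothness of $G(\cdot,\bar{\blambda})$. Since $G$ is $C^1$ in $\w$, the standard sum rule for general subgradients gives
\[
\partial_{\w}\bigl(G(\w,\bar{\blambda})+\lambda_1 R_1(\w)\bigr)=\nabla_{\w}G(\w,\bar{\blambda})+\lambda_1\,\partial_{\w}R_1(\w),
\]
and separability reduces $\partial_{\w}R_1(\w)$ to the Cartesian product of the one-dimensional subdifferentials of $t\mapsto|t|^p$. The first step is therefore to pin these down: for $t\neq 0$, $\partial|\cdot|^p(t)=\{p\,{\rm sgn}(t)|t|^{p-1}\}$; at $t=0$ with $0<p<1$ the regular subdifferential equals all of $\R$ because the difference quotient $|t|^{p-1}$ blows up to $+\infty$, and consequently the general (limiting) subdifferential at $0$ coincides with $\R$ as well.

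For the forward direction I would fix an index $i$ and split on whether $w_i=0$. If $w_i\neq 0$, the inclusion $\bm 0\in\partial_{\w}(G+\lambda_1 R_1)$ forces $\partial G/\partial w_i+p\lambda_1\,{\rm sgn}(w_i)|w_i|^{p-1}=0$; multiplying by $w_i$ and using $w_i\,{\rm sgn}(w_i)=|w_i|$ produces exactly the $i$-th entry of the scaled equation. If $w_i=0$, both $w_i\,\partial G/\partial w_i$ and $|w_i|^p$ vanish, so the scaled equation holds trivially at that coordinate. Note that this direction does not require $p<1$ and therefore works throughout $p\in(0,1]$.

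For the converse (with $p<1$) I would again argue coordinatewise. On indices $i\notin I(\w)$, dividing the scaled equation by $w_i$ recovers precisely the smooth first-order equation $\partial G/\partial w_i+p\lambda_1\,{\rm sgn}(w_i)|w_i|^{p-1}=0$, which is the single-valued subgradient expression. On indices $i\in I(\w)$, the scaled equation is automatically satisfied, and I instead need to exhibit a subgradient value equal to $-\lambda_1^{-1}\,\partial G/\partial w_i$; this is precisely where $p<1$ enters, since $\partial|\cdot|^p(0)=\R$ makes the required inclusion automatic (the border case $\lambda_1=0$ collapses into the smooth condition $\nabla_{\w}G=\bm 0$ and can be handled or excluded separately). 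Assembling the coordinatewise inclusions back together delivers $\bm 0\in\partial_{\w}(G+\lambda_1 R_1)$.

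The main obstacle is the accurate computation of $\partial|\cdot|^p$ at the origin when $0<p<1$: the function fails to be locally Lipschitz there, so one cannot appeal to convex analysis and must instead use the Rockafellar--Wets definitions of regular and limiting subgradients, together with the blow-up of the difference quotient $|t|^{p-1}$, to conclude that every real number is a regular and hence limiting subgradient at $0$. When $p=1$ this subdifferential shrinks to $[-1,1]$, which is exactly why the converse can fail in general for $p=1$ and why the lemma restricts it to $p<1$.
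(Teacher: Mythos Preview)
Your proposal is correct and follows essentially the same route as the paper: apply the sum rule $\partial_{\w}(G+\lambda_1R_1)=\nabla_{\w}G+\partial_{\w}(\lambda_1R_1)$, identify $\partial_{\w}(\lambda_1\|\cdot\|_p^p)$ componentwise (singleton on $i\notin I(\w)$, all of $\R$ on $i\in I(\w)$ when $p<1$), and then read off both implications coordinate by coordinate. The paper packages the subdifferential computation into a separate proposition proved via the variational description of regular subgradients, whereas you invoke separability and the blow-up of $|t|^{p-1}$ directly; the content is the same.
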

\begin{proof}
See Appendix~\ref{appendix:lemma}. 
\end{proof}
In view of the above lemma, we find that 
the feasible region of \eqref{eqn:surrogate} is larger than that of \eqref{eq:0615-2056} in general. 
However, for the case of $p<1$, we also see that these two regions are identical.  These relationships are summarized as in the following diagram.
\begin{equation*}
\mbox{Feasible region of \eqref{eqn:bilevel}} \subseteq \mbox{Feasible region of }\eqref{eq:0615-2056} 
\left\{\begin{array}{c}
\subseteq_{p=1}\\
=_{p<1}
\end{array}\right\}
 \mbox{Feasible region of }\eqref{eqn:surrogate}.
\end{equation*}
From this observation and Theorem\,\ref{thm:optimality}, we can derive the following theorem immediately: 
\begin{theorem}
Let $p<1$ and $(\w^{\ast},\blambda^{\ast})\in \R^n\times \R^r$ be a local optimum of \eqref{eq:0615-2056}.
Then,
$(\w^{\ast},\blambda^{\ast})$ together with some vectors
$\bzeta^{\ast}\in \R^n$ and $\bbeta^{\ast}\in\R^r$ satisfies the 
SB-KKT conditions\,\eqref{eqa:1214-1}--\eqref{eqa:1214-5}
under an appropriate constraint qualification concerning the constraints 
$\frac{\partial{G(\w,\bar{\blambda})}}{\partial w_i}+p\,{\rm sgn}(w_i)\lambda_1|w_i|^{p-1}=0\ (i\notin I(\w^{\ast})),\ w_i=0\ (i\in I(\w^{\ast}))${, and $\blambda\ge \bm 0$}.
\end{theorem}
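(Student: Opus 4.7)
The plan is to obtain this theorem as an essentially immediate corollary of Lemma~\ref{lem:0708-2352} combined with Theorem~\ref{thm:optimality}; all the real work has already been done in those two results. I would proceed in three short steps.

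First, I would invoke Lemma~\ref{lem:0708-2352} to identify the feasible regions of \eqref{eq:0615-2056} and \eqref{eqn:surrogate}. Since the lemma provides both implications when $p<1$, a point $(\w,\blambda)$ satisfies $\bm 0\in \partial_{\w}(G(\w,\bar{\blambda})+\lambda_1 R_1(\w))$ together with $\blambda\ge\bm 0$ if and only if $\bW\nabla_{\w}G(\w,\bar{\blambda})+p\lambda_1|\bw|^{p}=\bm 0$ and $\blambda\ge\bm 0$. Hence the two one-level problems have identical feasible sets (for $p<1$) and identical objectives $f(\w)$.

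Second, from this equivalence I would conclude that the given local optimum $(\w^{\ast},\blambda^{\ast})$ of \eqref{eq:0615-2056} is also a local optimum of \eqref{eqn:surrogate}: any neighborhood on which $(\w^{\ast},\blambda^{\ast})$ minimizes $f$ over the feasible set of \eqref{eq:0615-2056} is, in view of the set equality, a neighborhood on which it minimizes $f$ over the feasible set of \eqref{eqn:surrogate}.

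Third, I would apply Theorem~\ref{thm:optimality} directly. Under the stated constraint qualification on the constraints $\frac{\partial G(\w,\bar{\blambda})}{\partial w_i}+p\,{\rm sgn}(w_i)\lambda_1|w_i|^{p-1}=0\ (i\notin I(\w^{\ast}))$, $w_i=0\ (i\in I(\w^{\ast}))$, and $\blambda\ge\bm 0$, Theorem~\ref{thm:optimality} yields multipliers $\bzeta^{\ast}\in\R^n$ and $\bbeta^{\ast}\in\R^r$ such that the SB-KKT conditions \eqref{eqa:1214-1}--\eqref{eqa:1214-5} are satisfied at $(\w^{\ast},\blambda^{\ast})$, giving exactly the desired conclusion.

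There is no substantive obstacle here; the only thing to be mildly careful about is the logical direction in Lemma~\ref{lem:0708-2352} — we need the converse implication (which holds precisely because $p<1$) to pass from a feasible point of \eqref{eq:0615-2056} back to a feasible point of \eqref{eqn:surrogate}, and this is why the hypothesis $p<1$ is essential. Once that is noted, the proof reduces to a one-line citation of the lemma followed by a one-line citation of Theorem~\ref{thm:optimality}.
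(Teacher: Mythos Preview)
Your proposal is correct and matches the paper's own approach exactly: the paper states this theorem as an immediate consequence of Lemma~\ref{lem:0708-2352} (identifying the feasible regions of \eqref{eq:0615-2056} and \eqref{eqn:surrogate} when $p<1$) together with Theorem~\ref{thm:optimality}. One very minor remark: in your closing paragraph you say the converse implication is needed ``to pass from a feasible point of \eqref{eq:0615-2056} back to a feasible point of \eqref{eqn:surrogate}'', but that passage is the forward implication (which holds for all $p$); the converse is what ensures that every nearby feasible point of \eqref{eqn:surrogate} is feasible for \eqref{eq:0615-2056}, which is the direction actually required to transfer local optimality---your Step~1 already handles this correctly by asserting equality of the feasible sets.
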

 
\subsection{Convergence of Algorithm~\ref{alg:smoothing} to an SB-KKT Point}\label{sec:convtoSB}
Hereafter, 
for convenience of explanation,
we suppose that 
an $\hat{\varepsilon}_{k-1}$-approximate KKT point 
$(\w^k, \blambda^k,\bzeta^{k},\bbeta^{k})$
is a solution satisfying conditions\,\eqref{eq:1215-1}-\eqref{eq:1215-3}
with 
$$
(\bm\varepsilon_1,\varepsilon_2,{\bm \varepsilon}_3,{\bm\varepsilon}_4,\varepsilon_5)
=(\bm\varepsilon^{k-1}_1,\varepsilon_2^{k-1},{\bm \varepsilon}_3^{k-1},{\bm\varepsilon}_4^{k-1},\varepsilon_5^{k-1}),
$$
where $\{(\bm\varepsilon^{k}_1,\varepsilon_2^{k},{\bm \varepsilon}_3^{k},{\bm\varepsilon}_4^{k},\varepsilon_5^{k})\}$ is a sequence that converges to zero as $k\to \infty$.

Moreover, we suppose that the algorithm is well-defined in the sense 
that {an $\hat{\varepsilon}_k$-approximate} KKT point of \eqref{eqn:smoothprob} is found in Step~2 at every iteration, and
it generates {an} infinite number of iteration points.
In addition, we make the following assumptions:\vspace{1em}

\noindent{\bf Assumption~A:}\ Let {$\{(\w^k, \blambda^k,\bzeta^k,\bbeta^k)\}\subseteq \R^{n}\times \R^r
\times \R^{n}\times \R^r$
be a} sequence produced by the proposed algorithm. 
Then, the following properties hold:
\begin{enumerate}
\item[{\bf A1}:] ${\displaystyle \liminf_{k\to\infty}\lambda^k_1>0}$.
\item[{\bf A2}:] 
The sequence $\{(\w^k, \blambda^k,\bzeta^k,\bbeta^k)\}$ is bounded.
\item[{\bf A3}:] 
Let $p=1$ and $(\w^{\ast}, \blambda^{\ast})$
be an arbitrary accumulation point of the sequence $\{(\w^k, \blambda^k)\}$.
It then holds that $\lambda^{\ast}_1\neq \left|\frac{\partial G(\w^{\ast},\bar{\blambda}^{\ast})}{\partial w_i}\right|$ for any $i\in I(\w^{\ast})$.
\end{enumerate}
Assumption~A1 means that the $\ell_p$-regularization term, i.e., the function $R_1$ works effectively.
We will discuss Assumption~A2 at the end of this section.
  Specifically, we will prove that
under certain conditions, the Lagrange multiplier part $\{\bzeta^k,\bbeta^k)\}$ is actually bounded. 
Assumption~A3 is a technical assumption for the case of $p=1$.
It indicates that, for all $i\in I(\w^{\ast})$, zero is not situated on the boundary of the subdifferential of $G(\w,\blambda)+\lambda\|\w\|_1$ w.r.t. $w_i$.
Interestingly, for the case of $p<1$, we can establish the convergence of Algorithm~\ref{alg:smoothing} in the absence of A3. 

{Under the presence of these assumptions, our goal is to prove the following convergence theorem, which motivates us to make a stopping criterion of the algorithm based on the SB-KKT conditions in the numerical experiments we will conduct later.
\begin{theorem}\label{last_thm}
Suppose that Assumptions~A1--A3 hold.
Then, any accumulation point of $\{(\w^k,\blambda^k,{\bzeta^k},\bbeta^k)\}$
generated by Algorithm~\ref{alg:smoothing} satisfies the 
SB-KKT conditions\,\eqref{eqa:1214-1}--\eqref{eqa:1214-5} for problem \eqref{eqn:bilevel}.
\end{theorem}
We will prove this theorem by showing that passing the approximate KKT conditions~\eqref{eq:1215-1}-\eqref{eq:1215-3} to the limit yields the SB-KKT conditions.
For this purpose, in particular, we have to examine how $\nabla\bphi_{\mu_k}(\w^k)$ and
$\nabla^2\bphi_{\mu_k}(\w^k)$ behave in the limit.
{We remark that each component of $\nabla\bphi_{\mu}(\w)$ and each diagonal one of $\nabla^2\bphi_{\mu}(\w)$ are expressed as 
\begin{align}
&(\nabla\bphi_{\mu}(\w))_i=pw_i(w_i^2+\mu^2)^{\frac{p}{2}-1},\label{al:1214-1}\\
&(\nabla^2\bphi_{\mu}(\w))_{ii}=p(w_i^2+\mu^2)^{\frac{p}{2}-1}+p(p-2)w_i^2(w_i^2+\mu^2)^{\frac{p}{2}-2}\label{al:1214-2} 
\end{align}
for $i=1,2,\ldots,n$, $\mu>0$, and $\w\in \R^n$.
Note that all the off-diagonal components of $\nabla^2\bphi_{\mu}(\w)$ are zeros.
We present the following proposition, whose proof is given in Appendix~\ref{appendix:Lemma2}.
\begin{proposition}\label{lem:1217-1}
Let $\w^{\ast}$ be the point defined in {\bf A3}.
Then, we have
\begin{align}
&\lim_{k\to\infty}\bW_k\nabla\bphi_{\mu_{k-1}}(\w^k)=p|\w^{\ast}|^p,\label{al:1214-1921}\\
&\lim_{k\to\infty}\bW^2_k\nabla^2\bphi_{\mu_{k-1}}(\w^k)=p(p-1)\diag(|\w^{\ast}|^p),\label{al:1214-1922}
\end{align}
where 
${\bm W}_k:={\diag}(\bm w_k)$
for each $k$.
\end{proposition}}
Next, we prove that, for $i\in I(\w^{\ast})$, the $i$-th diagonal component of $(\nabla^2\bphi_{\mu_{k-1}}(\w^k))_{ii}$ diverges.
The key of its proof is the approach speed of $w_i\ (i\in I(\w^{\ast}))$ towards zeros compared with that of the smoothing parameter $\mu_{k-1}$.
Actually, according to the next lemma, $\mu_{k-1}$ gradually approaches $0$ with the speed not faster than $\max_{i\in I(\w^{\ast})}|w_i^k|^{\frac{1}{2-p}}$.
The proof will be given in Appendix~\ref{appendix:speed}.
Remarkably, when $p<1$, it holds true in the absence of Assumption\,{\bf A3}.
}
\begin{LEMM}\label{prop:0312}
Suppose that Assumptions~A1--A3 hold.
Let 
$(\w^{\ast},\blambda^{\ast})$ be an arbitrary accumulation point of 
$\{(\w^k,\blambda^k)\}$ and $\{(\w^k,\blambda^k)\}_{k\in K}(\subseteq \{(\w^k,\blambda^k)\})$ be an arbitrary subsequence converging to $(\w^{\ast},\blambda^{\ast})$.
Then, there exists some $\gamma > 0$ such that
\begin{equation}
\mu_{k-1}^2\ge \gamma |w_i^k|^{\frac{2}{2-p}} \ \ (i\in I(\w^{\ast}))\label{eq:A3}
\end{equation}
for all $k\in K$ sufficiently large.
\end{LEMM}
From the above lemma, we can derive the following proposition.
\begin{proposition}\label{lem:0312}
Suppose that Assumptions~A1--A3 hold. 
Let 
$\w^{\ast}$ be an arbitrary accumulation point of 
the sequence 
$\{\w^k\}$ and $\{\w^k\}_{k\in K}(\subseteq \{\w^k\})$ be an arbitrary subsequence converging to $\w^{\ast}$.
Then, for any $i\in I(\w^{\ast})$, 
\begin{equation}
\lim_{k\in K\to\infty}\left|(\nabla^2\bphi_{\mu_{k-1}}(\w^k))_{ii}\right|=\infty.\label{eq:0312:1012}
\end{equation} 
\end{proposition}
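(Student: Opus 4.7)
The plan is to factor the diagonal entry of $\nabla^2\varphi_{\mu}$ so that the divergent and bounded parts are cleanly separated. Starting from \eqref{al:1214-2}, pulling out the common factor $(w_i^2+\mu^2)^{p/2-2}$ gives
$$
(\nabla^2\varphi_\mu(\w))_{ii}=p\,(w_i^2+\mu^2)^{p/2-2}\bigl[(p-1)w_i^2+\mu^2\bigr].
$$
Introducing the ratio $t:=w_i^2/\mu^2$ (with the trivial case $w_i=0$ handled separately, since then \eqref{al:1214-2} immediately yields $(\nabla^2\varphi_\mu(\w))_{ii}=p\,\mu^{p-2}$), this rewrites as
$$
(\nabla^2\varphi_\mu(\w))_{ii}=p\,\mu^{p-2}(t+1)^{p/2-2}\bigl[(p-1)t+1\bigr].
$$
Because $0<p\le 1<2$, the prefactor $\mu_{k-1}^{p-2}$ blows up as $k\in K\to\infty$; the whole task is to show that the remaining factor stays bounded away from $0$ along the subsequence.

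For this I would invoke Proposition~\ref{prop:0312}: under Assumptions~A1--A3 there exists $\gamma>0$ such that $\mu_{k-1}^2\ge \gamma|w_i^k|^{2/(2-p)}$ for every $i\in I(\w^{\ast})$ and every sufficiently large $k\in K$. Rearranging yields
$$
t_k:=\frac{(w_i^k)^2}{\mu_{k-1}^2}\le \gamma^{-(2-p)}\mu_{k-1}^{2(1-p)},
$$
so the sequence $\{t_k\}_{k\in K}$ is uniformly bounded; moreover $t_k\to 0$ when $p<1$, whereas for $p=1$ it is merely bounded by $\gamma^{-1}$. Boundedness of $t_k$ makes $(t_k+1)^{p/2-2}$ lie in a compact subset of $(0,\infty)$, while $(p-1)t_k+1\to 1$ in either regime (the coefficient $p-1$ vanishes when $p=1$, and $t_k\to 0$ when $p<1$). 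Hence the bracket factor $\bigl|(t_k+1)^{p/2-2}[(p-1)t_k+1]\bigr|$ is bounded below by some $c>0$ for all large $k\in K$.

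Combining these two observations gives $|(\nabla^2\varphi_{\mu_{k-1}}(\w^k))_{ii}|\ge p c\,\mu_{k-1}^{p-2}\to\infty$, which is the claim. I expect the main (mild) obstacle to be uniform treatment of the two regimes $p=1$ and $p<1$: for $p=1$ the ratio $t_k$ is only bounded rather than vanishing, so one must rely on the coefficient $p-1$ being zero to keep $(p-1)t_k+1$ away from $0$; the clean factored form above makes this bookkeeping essentially automatic. The quantitative input that really does the work is Proposition~\ref{prop:0312}, which prevents $w_i^k$ from shrinking too quickly relative to $\mu_{k-1}$ and thereby keeps $t_k$ from escaping to $\infty$.
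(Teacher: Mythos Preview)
Your proof is correct and follows essentially the same approach as the paper: both factor $(\nabla^2\varphi_{\mu}(\w))_{ii}=p\,(w_i^2+\mu^2)^{p/2-2}\bigl[(p-1)w_i^2+\mu^2\bigr]$, invoke Proposition~\ref{prop:0312} to control the ratio $(w_i^k)^2/\mu_{k-1}^2$, and then argue that the bracket factor stays bounded away from zero while $\mu_{k-1}^{p-2}\to\infty$. Your introduction of the auxiliary variable $t=w_i^2/\mu^2$ streamlines the bookkeeping slightly compared to the paper's explicit chain of inequalities, but the substance is identical.
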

\begin{proof}
Choose $i\in I(\w^{\ast})$ arbitrarily.
Note that 
\begin{equation}
\lim_{k\in K\to \infty}|w^k_i|=w^{\ast}_i=0.\label{eq:0312-1540}
\end{equation}
By Lemma\,\ref{prop:0312}, 
there is some $\gamma>0$ such that 
\begin{equation}
\mu_{k-1}^2\ge \gamma|w^k_i|^{\frac{2}{2-p}}\label{eq:0312-1526}
\end{equation}
for all $k\in K$ sufficiently large. 
In view of this fact, we have, for all $k\in K$ large enough,
\begin{align}
\mu_{k-1}^2+(p-1)(w^k_i)^2&\ge  \gamma|w^k_i|^{\frac{2}{2-p}}+(p-1)|w^k_i|^2\notag \\ 
                        &= |w^k_i|^{\frac{2}{2-p}}(\gamma + (p-1)|w^k_i|^{2-\frac{2}{2-p}})\notag \\
                        &\ge 0, \label{al:0312-1504}
\end{align}
where the second inequality can be verified by noting that $\gamma + (p-1)|w^k_i|^{2-\frac{2}{2-p}}>0$ holds for all $k\in K$ sufficiently large because $\gamma>0$ and $(p-1)\lim_{k\in K\to \infty}|w^k_i|^{2-\frac{2}{2-p}}=
(p-1)|w_i^{\ast}|^{2-\frac{2}{2-p}}=0$. 
Furthermore, notice that
\begin{equation}
|w^k_i|^{\frac{2}{2-p}}\ge |w^k_i|^2\label{eq:0402-2}
\end{equation}
holds for all $k\in K$ sufficiently large because $1<\frac{2}{2-p}\le 2$ and 
$|w^k_i|<1$ for all $k\in K$ large enough by \eqref{eq:0312-1540}.
Relation\,\eqref{eq:0312-1526} then implies
\begin{equation}
\mu_{k-1}^2\ge \gamma|w^k_i|^{2}.\label{eq:0313-2}
\end{equation}
From expression\,\eqref{al:1214-2}, it follows that 
\begin{align}
\left|(\nabla^2\bphi_{\mu_{k-1}}(\w^k))_{ii}\right|
&=p\left|((w^k_i)^2+\mu_{k-1}^2)^{\frac{p}{2}-2}\left((w^k_i)^2+\mu_{k-1}^2+(p-2)(w^k_i)^2\right)\right|
\notag\\
&=p \left|((w^k_i)^2+\mu_{k-1}^2)^{\frac{p}{2}-2}(\mu_{k-1}^2+(p-1)(w^k_i)^2)\right|\notag\\
                                      &= p ((w^k_i)^2+\mu_{k-1}^2)^{\frac{p}{2}-2}\left(\mu_{k-1}^2+(p-1)(w^k_i)^2\right)\notag \\
                                      &\ge  p \left(1+\frac{1}{\gamma}\right)^{\frac{p}{2}-2}\mu_{k-1}^{{p}-4}\left(\mu_{k-1}^2+(p-1)(w^k_i)^2\right)\notag\\
                                      &=  p \left(1+\frac{1}{\gamma}\right)^{\frac{p}{2}-2}\mu_{k-1}^{p-2}\left(1+(p-1)\frac{(w^k_i)^2}{\mu_{k-1}^2}\right),\label{al:0312-1444}
\end{align}
where the third equality follows from \eqref{al:0312-1504} and the first inequality comes from \eqref{eq:0313-2} and $\frac{p}{2}-2<0$.
Moreover, by \eqref{eq:0312-1526}, we see ${\mu_{k-1}^{2(2-p)}}/{\gamma^{2-p}}\ge (w^k_i)^2$
and thus have
\begin{equation}
\frac{\mu_{k-1}^{2-2p}}{\gamma^{2-p}}\ge \frac{(w^k_i)^2}{\mu_{k-1}^2}.\notag
\end{equation}
By this inequality, it holds that 
\begin{equation}
\lim_{k\in K\to \infty}\left|\frac{(w^k_i)^2}{\mu_{k-1}^2}\right|\begin{cases}=0\hspace{1em}&(p<1)\\
\le \frac{1}{\gamma}\hspace{1em}&(p=1).
\end{cases}
\notag
\end{equation}
Thus, we obtain  
\begin{equation}
\lim_{k\in K\to \infty}(p-1)\left|\frac{(w^k_i)^2}{\mu_{k-1}^2}\right|=0,\notag
\end{equation}
which together with \eqref{al:0312-1444} {and $\lim_{k\to \infty}\mu_{k-1}^{p-2}=\infty$} implies
\begin{equation}
\lim_{k\in K\to \infty}\left|(\nabla^2\bphi_{\mu_{k-1}}(\w^k))_{ii}\right|=\infty.\notag
\end{equation}
Since $i\in I(\w^{\ast})$ was arbitrarily chosen, the proof is complete.
\end{proof}

Now, we are ready to prove Theorem~\ref{last_thm} using Propositions~\ref{lem:1217-1} and \ref{lem:0312}.
\subsection*{Proof of Theorem~\ref{last_thm}}
Consider the $\hat{\varepsilon}_{k-1}$-approximate KKT conditions \eqref{eq:1215-1}, \eqref{eqn:1217-2}, and \eqref{eq:1215-2} with $(\w,\blambda,{\bzeta},\bbeta)=(\w^k,\blambda^k,{\bzeta^k},\bbeta^k)$ and
$({\bm \varepsilon}_1,
{\bm \varepsilon}_3,
{\varepsilon}_5)
=({\bm \varepsilon}^{k-1}_1,
{\bm \varepsilon}_3^{k-1},
{\varepsilon}_5^{k-1})$.
Let $(\w^{\ast},\blambda^{\ast},\bzeta^{\ast},\bbeta^{\ast})$ be an arbitrary accumulation point of $\{(\w^k,\blambda^k,\bzeta^k,\bbeta^k)\}$.
By taking a subsequence if necessary,
without loss of generality, we can suppose that
\begin{equation}
\lim_{k\to \infty}
(\w^k,\blambda^k,\bzeta^k,\bbeta^k)=(\w^{\ast},\blambda^{\ast},\bzeta^{\ast},\bbeta^{\ast}).\label{eq:conv}
\end{equation}
To show the desired result, it suffices to prove that $(\w^{\ast},\blambda^{\ast},\bzeta^{\ast},\bbeta^{\ast})$ satisfies \eqref{eqa:1214-1}--\eqref{eqa:1214-5}.
Now, we give the proof by three blocks as follows:\vspace{0.5em}\\
\noindent{\it Proof of conditions~\eqref{eqa:1214-1}, \eqref{eqa:1214-2}, \eqref{eqa:1224-4} and \eqref{eqa:1214-5}}:
As for \eqref{eqa:1214-1} and \eqref{eqa:1214-2}, multiplying
\eqref{eq:1215-1} and \eqref{eq:1215-2} {with $(\w,\blambda,{\bzeta},\bbeta)=(\w^k,\blambda^k,{\bzeta^k},\bbeta^k)$}
by $\bW_k^2$ and $\bW_k$ on the left, respectively, we obtain
\begin{align*}
&\bW_k^2\nabla f(\w^k) + \bW_k^2\left(\nabla_{\w\w}^2 G(\w^k,\bar{\blambda}^k)+\lambda_1^k \nabla^2\bphi_{\mu_{k-1}}(\w^k)\right){\bm\zeta^k}=\bW_k^2{\bm \varepsilon}^{k-1}_1,\\
&\bW_k \nabla_{\w} G(\w^k,\bar{\blambda}^k)+\lambda_1^k\bW_k \nabla\bphi_{\mu_{k-1}}(\w^k)={
\bW_k{\bm \varepsilon}^{k-1}_4}.
\end{align*}
Note that 
the functions $\nabla f$, $\nabla_{\w\w}^2 G$, and $\nabla_{\w} G$ are continuous and let $k\to \infty$ in the above equations. 
Then, using \eqref{al:1214-1921} and \eqref{al:1214-1922} in Proposition\,\ref{lem:1217-1} together with 
$\mu_{k-1}\to 0,\ 
{({\bm \varepsilon}^{k-1}_1,{\bm \varepsilon}^{k-1}_4)\to (\0,\0)
}$ as $k\to \infty$, 
we get \eqref{eqa:1214-1} and \eqref{eqa:1214-2}, that is to say,
\begin{align*}
  &\bW_{\ast}^2\nabla f(\w^{\ast}) +\H(\w^{\ast},\blambda^{\ast})\bzeta^{\ast}=\0, \\
&\bW_{\ast}\nabla_{\w}G(\w^{\ast},\bar{\blambda}^{\ast})+p\lambda_1^{\ast}|\bw^{\ast}|^p=\0.
\end{align*}
Conditions\,\eqref{eqa:1224-4} and \eqref{eqa:1214-5} are obtained by driving $k$ to $\infty$ in \eqref{eqn:1217-2} and \eqref{eq:1215-3} {with $(\w,\blambda,{\bzeta},\bbeta)=(\w^k,\blambda^k,{\bzeta^k},\bbeta^k)$}.\vspace{0.5em}

\noindent{\it Proof of condition~\eqref{eqa:1214-6}}:
Choose $i\in I(\w^{\ast})$ arbitrarily.
Note that the continuity of the functions $\nabla^2_{\w\w}G$ and $\nabla f$.
Then, from \eqref{eq:conv} and condition\,\eqref{eq:1215-1} with $(\w,\blambda,\bzeta)=(\w^k,\blambda^k,\bzeta^k)$, $\{\lambda^k_1 \left(\nabla^2\bphi_{\mu_{k-1}}(\w^k)\right)_{ii}{\zeta^k_i}\}$ is bounded.
On the other hand, recall that  $\{(\nabla^2\bphi_{\mu_{k-1}}(\w^k))_{ii}\}$ is unbounded from Proposition\,\ref{lem:0312}
and $\lim_{k\to\infty}\lambda^k_1=\lambda^{\ast}_1>0$ from Assumption~A1. 
Thus, we get $\lim_{k\to\infty}\zeta^k_i=0$. 
Since the index $i$ was chosen from $I(\w^{\ast})$ arbitrarily, we conclude condition~\eqref{eqa:1214-6}.\vspace{0.5em}

\noindent{\it Proof of condition~\eqref{eqa:1214-3}:} We begin with proving 
\begin{equation}
\lim_{k\to\infty}\sum_{i\in I(\bw^{\ast})}w^k_i((w^k_i)^2+\mu_{k-1}^2)^{\frac{p}{2}-1}\zeta^k_i=0. \label{eq:1217-2340}
\end{equation}
Choose $i\in I(\w^{\ast})$ arbitrarily again.
Note that by Lemma\,\ref{prop:0312}, there exists some $\gamma>0$ such that 
\begin{equation}
\mu_{k-1}^2\ge \gamma|w^k_i|^{\frac{2}{2-p}}\label{eq:0312-1818}
\end{equation}
for all $k$ sufficiently large. In what follows, we consider sufficiently large $k$ so that the 
inequality \eqref{eq:0312-1818} holds.
Then, by $0< p\le 1$, we get 
\begin{equation}
\frac{\mu_{k-1}^{2-p}}{\gamma^{\frac{2-p}{2}}}\ge |w^k_i|.\label{eq:0312-1818-2}
\end{equation}
We then have  
\begin{align}
\left|w^k_i((w^k_i)^2+\mu_{k-1}^2)^{\frac{p}{2}-1}\zeta^k_i\right|&\le \left|w^k_i\mu_{k-1}^{2(\frac{p}{2}-1)}\zeta^k_i\right|\notag \\
&\le \frac{\mu_{k-1}^{2-p}}{\gamma^{\frac{2-p}{2}}}\mu_{k-1}^{2(\frac{p}{2}-1)}\left|\zeta^k_i\right|\notag\\
&=\gamma^{\frac{p}{2}-1}\left|\zeta^k_i\right|.\label{al:1216:2340}
\end{align}
Relation\,\eqref{al:1216:2340} and condition~\eqref{eqa:1214-6}, which was proved above, imply
\begin{equation*}
\lim_{k\to\infty}\left|w^k_i((w^k_i)^2+\mu_{k-1}^2)^{\frac{p}{2}-1}\zeta^k_i\right|=0\label{eq:0312-1828}
\end{equation*}
and hence summing up this equation over $I(\w^{\ast})$ gives the desired expression\,\eqref{eq:1217-2340}}.

  Next, by using \eqref{eq:conv}, $\mu_{k-1}\to 0\ (k\to\infty)$, $w_i^{\ast}\neq 0\ (i\notin I(\w^{\ast}))$, and
$w^{\ast}_i={\rm sgn}(w^{\ast}_i)|w^{\ast}_i|$, we obtain 
\begin{align}
&\lim_{k\to\infty}\sum_{i\notin I(\bw^{\ast})}w^k_i((w^k_i)^2+\mu_{k-1}^2)^{\frac{p}{2}-1}\zeta^k_i\notag \\
&=\sum_{i\notin I(\bw^{\ast})}{\rm sgn}(w^{\ast}_i)|w^{\ast}_i|^{p-1}\zeta^{\ast}_i.\label{eq:1216-2357}
\end{align}
Combining \eqref{eq:1217-2340} and \eqref{eq:1216-2357} with \eqref{al:1214-1} yields 
\begin{align*}
&\lim_{k\to\infty}\nabla\bphi_{\mu_{k-1}}(\w^k)^{\top}\bzeta^k\\
&=
\lim_{k\to\infty}p\sum_{i=1}^nw^k_i((w^k_i)^2+\mu_{k-1}^2)^{\frac{p}{2}-1}\zeta^k_i\\
&=p\lim_{k\to\infty}\left(\sum_{i\in I(\w^{\ast})}w^k_i((w^k_i)^2+\mu_{k-1}^2)^{\frac{p}{2}-1}\zeta^k_i
+\sum_{i\notin I(\w^{\ast})}w^k_i((w^k_i)^2+\mu_{k-1}^2)^{\frac{p}{2}-1}\zeta^k_i
\right)
\\
&=p \sum_{i\notin I(\bw^{\ast})}{\rm sgn}(w^{\ast}_i)|w^{\ast}_i|^{p-1}\zeta^{\ast}_i,
\end{align*}
which together with 
driving $k$ to $\infty$ in condition\,\eqref{eqn:1217-1} with $(\w,\blambda,\bzeta,\bbeta)=(\w^k,\blambda^k,\bzeta^k,\bbeta^k)$
implies
\begin{equation*}
p \sum_{i\notin I(\bw^{\ast})}{\rm sgn}(w^{\ast}_i)|w^{\ast}_i|^{p-1}\zeta^{\ast}_i=\eta_1^{\ast},
\end{equation*}
where we use $\eta^{\ast}_1=\lim_{k\to \infty}\eta^k_1$.
This is nothing but condition\,\eqref{eqa:1214-3}.

Consequently, the proof of Theorem~\ref{last_thm} is complete.\\
\hfill$\blacksquare$

{\subsection*{Boundedness of the Lagrange-multiplier Sequence}
  In Assumption~{\bf A2},
  we suppose that the Lagrange multiplier sequence $\{(\bzeta^k,\bbeta^k)\}$ is bounded.
One may ask when this condition holds. 
In many optimization algorithms, boundedness properties of relevant Lagrange multiplier sequences are shown to be true under suitable constraint qualifications.
In fact, we can verify the boundedness of $\{(\bzeta^k,\bbeta^k)\}$ under the presence of linearly constraint-like qualifications as follows:
\begin{description}
\item[{\bf A4}:] 
Let $(\w^{\ast},\blambda^{\ast})\in \R^n\times \R^r$ be an arbitrary accumulation point of the sequence $\{(\w^k,\blambda^k)\}$.
{Let 
$$I(\blambda^{\ast}):=\{i\in\{1,2,\ldots,r\}\mid \lambda_i^{\ast}=0\}.$$}
Then, the linearly independent constraint qualification (LICQ) holds at $(\w,\blambda)=(\w^{\ast},\blambda^{\ast})$ for the constraints 
${\Phi_{i}(\w,\blambda):=}\frac{\partial{G(\w,\bar{\blambda})}}{\partial w_i}+p\,{\rm sgn}(w_i)\lambda_1|w_i|^{p-1}=0\ (i\notin I(\w^{\ast})),\ w_i=0\ (i\in I(\w^{\ast}))$, and $\blambda\ge \bm 0$,
{that is to say, the gradient vectors for the active constraints 
$$\left\{
\left\{\nabla \Phi_{i}(\w^{\ast},\blambda^{\ast})\right\}_{i\notin I(\w^{\ast})},
\left\{\nabla_{(\w,\blambda)} w_i|_{\w = \w^{\ast}}\right\}_{i\in I(\w^{\ast})},
\left\{\nabla_{(\w,\blambda)} \lambda_i|_{\blambda=\blambda^{\ast}}\right\}_{i\in I(\blambda^{\ast})}
\right\}\subseteq \R^{n+r}
$$
are linearly independent.
}
\end{description}

\begin{proposition}\label{prop:0607}
  Suppose that Assumptions~A1, A3, and A4 hold.
  Additionally, suppose that the sequence $\{(\w^k,\blambda^k)\}$ is bounded.
Let $\{(\bzeta^k,\bbeta^k)\}\subseteq \R^{n}\times \R^r$ be a sequence of the accompanying Lagrange multiplier vectors which satisfy the KKT conditions\,\eqref{eq:1215-1}--\eqref{eq:1215-3}.
Then, $\{(\bzeta^k,\bbeta^k)\}$ is bounded.
\end{proposition}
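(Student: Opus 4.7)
The plan is a rescaling-based proof by contradiction. Assume, toward a contradiction, that $\{(\bzeta^k,\bbeta^k)\}$ is unbounded. Setting $t_k:=\|(\bzeta^k,\bbeta^k)\|$, I may pass to a subsequence along which $t_k\to\infty$, the normalized multipliers $(\tilde\bzeta^k,\tilde\bbeta^k):=(\bzeta^k,\bbeta^k)/t_k$ converge to some unit-norm limit $(\tilde\bzeta^*,\tilde\bbeta^*)$, and, using Assumption~A2, $(\w^k,\blambda^k)\to(\w^*,\blambda^*)$. The strategy is to divide each of \eqref{eq:1215-1}--\eqref{eq:1215-3} by $t_k$, pass to the limit, and read the resulting equalities as the coordinate-wise components of a nontrivial linear combination of the vectors appearing in Lemma~\ref{lem:0607} that equals $\bm 0$, contradicting their linear independence.

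The first step is to show $\tilde\zeta_i^*=0$ for every $i\in I(\w^*)$. At the $i$-th coordinate of \eqref{eq:1215-1}$/t_k$, Assumption~A2 together with $t_k\to\infty$ makes every term except $\lambda_1^k(\nabla^2\bphi_{\mu_{k-1}}(\w^k))_{ii}\tilde\zeta_i^k$ stay bounded, so this product must remain bounded; but $\liminf \lambda_1^k>0$ by A1 and $|(\nabla^2\bphi_{\mu_{k-1}}(\w^k))_{ii}|\to\infty$ by Proposition~\ref{lem:0312} then force $\tilde\zeta_i^k\to 0$. Separately, dividing \eqref{eq:1215-3} by $t_k$ yields $(\blambda^*)^\top\tilde\bbeta^*=0$; with the non-negativity this gives $\tilde\eta_j^*=0$ whenever $\lambda_j^*>0$, and Assumption~A1 in particular forces $\tilde\eta_1^*=0$ and $1\notin I(\blambda^*)$.

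The next step is to take the limit in the remaining rescaled equations. For $i\notin I(\w^*)$ the formulas \eqref{al:1214-1}--\eqref{al:1214-2} give $(\nabla\bphi_{\mu_{k-1}}(\w^k))_i\to p\,{\rm sgn}(w_i^*)|w_i^*|^{p-1}$ and $(\nabla^2\bphi_{\mu_{k-1}}(\w^k))_{ii}\to p(p-1)|w_i^*|^{p-2}$. For $i\in I(\w^*)$ the bound $\mu_{k-1}^2\ge\gamma|w_i^k|^{2/(2-p)}$ of Proposition~\ref{prop:0312}, together with $(w_i^k)^2+\mu_{k-1}^2\ge\gamma|w_i^k|^{2/(2-p)}$ and $p/2-1<0$, yields the uniform estimate $|(\nabla\bphi_{\mu_{k-1}}(\w^k))_i|\le p\gamma^{(p-2)/2}$; combined with $\tilde\zeta_i^k\to 0$ from the first step, these indices contribute nothing in the limit to $\nabla\bphi_{\mu_{k-1}}(\w^k)^\top\tilde\bzeta^k$. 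Passing to the limit in \eqref{eq:1215-1}$/t_k$ (for $i\notin I(\w^*)$), \eqref{eqn:1217-1}$/t_k$, and \eqref{eqn:1217-2}$/t_k$ therefore produces linear relations among $(\tilde\zeta_i^*)_{i\notin I(\w^*)}$ and $(\tilde\eta_l^*)_{l=1,\ldots,r}$.

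Using the identities $\partial\Phi_i/\partial w_l=(\nabla^2_{\w\w}G)_{il}+p(p-1)\lambda_1|w_i|^{p-2}\delta_{il}$ (for $w_i\neq 0$), $\partial\Phi_i/\partial\lambda_1=p\,{\rm sgn}(w_i)|w_i|^{p-1}$, and $\partial\Phi_i/\partial\lambda_l=\partial R_l/\partial w_i$ for $l\ge 2$, I would read those limiting equations as the coordinate-wise components of
\[
\sum_{i\notin I(\w^*)}\tilde\zeta_i^*\,\nabla_{(\tilde\w,\blambda)}\Phi_i(\w^*,\blambda^*)\,-\,\sum_{j\in I(\blambda^*)}\tilde\eta_j^*\,\nabla_{(\tilde\w,\blambda)}\lambda_j|_{\blambda=\blambda^*}=\bm 0.
\]
Since $\|(\tilde\bzeta^*,\tilde\bbeta^*)\|=1$ while the first step annihilates $(\tilde\zeta_i^*)_{i\in I(\w^*)}$ and the complementarity step annihilates $(\tilde\eta_j^*)_{j\notin I(\blambda^*)}$, not all of the remaining coefficients $(\tilde\zeta_i^*)_{i\notin I(\w^*)}$ and $(\tilde\eta_j^*)_{j\in I(\blambda^*)}$ can vanish, contradicting the linear independence asserted in Lemma~\ref{lem:0607}. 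I expect the main technical obstacle to be the simultaneous control of the blow-up of $(\nabla^2\bphi_{\mu_{k-1}}(\w^k))_{ii}$ and the uniform boundedness of $(\nabla\bphi_{\mu_{k-1}}(\w^k))_i$ at indices $i\in I(\w^*)$; both hinge on the sharp estimate of Proposition~\ref{prop:0312}, which itself requires the borderline Assumption~A3 when $p=1$.
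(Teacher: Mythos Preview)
Your proposal is correct and follows essentially the same route as the paper: a contradiction argument via rescaled multipliers, with $\tilde\zeta_i^*=0$ on $I(\w^*)$ obtained from Proposition~\ref{lem:0312} together with Assumption~A1, $\tilde\eta_j^*=0$ off $I(\blambda^*)$ from the rescaled complementarity, the uniform bound on $(\nabla\bphi_{\mu_{k-1}}(\w^k))_i$ for $i\in I(\w^*)$ via Proposition~\ref{prop:0312}, and the final contradiction through Lemma~\ref{lem:0607}. Your direct estimate $|(\nabla\bphi_{\mu_{k-1}}(\w^k))_i|\le p\gamma^{(p-2)/2}$ is a slightly cleaner packaging than the paper's two-step bound through $\mu_{k-1}$, but the content is identical.
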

\begin{proof}
  We derive contradiction by supposing that $\{(\bzeta^k,\bbeta^k)\}$ is unbounded.
For details, see Appendix~\ref{appendix:boundedness}.
\end{proof}}

\section{Numerical Experiments}\label{sec:numeric}
In this section, we investigate the performance of Algorithm~\ref{alg:smoothing} through comparison to other hyperparameter learning methods such as Bayesian optimization\,\citep{Mockus1978} and gridsearch.
All the experiments are conducted on a personal computer with
  Intel Core i7-8559U CPU @ 2.70GHz, 16.00 GB memory.  
Algorithm~\ref{alg:smoothing} and the other competitor algorithms are implemented with MATLAB R2020a.

Two kinds of bilevel problems relevant to linear regression with real data are solved.
The first problem handles a single hyperparameter related to the $\ell_p$-regularizer, while the second one does multiple hyperparameters.
\subsection{Linear regression bilevel problem with a single $\ell_p$ hyperparameter}\label{sec:single}
In this section, we solve the following bilevel problem regarding squared linear regression problem with a single $\ell_p$ hyperparameter.
\begin{align}\label{al:0913}
\begin{array}{rll}
\displaystyle{\min_{\bw,\blambda}}&\ &\|\bA_{\rm val}{\w}-\bm b_{\rm val}\|_2^2\\ 
\mbox{s.t. }&  &\bw \in \displaystyle{\mathop{\rm argmin}_{\hat{\bw}}}\ \left(\|\bA_{\rm tr}\hat{\w}-\bm b_{\rm tr}\|_2^2 + {\exp(\lambda_1)\|\hat{\bw}\|_p^p}
\right),\\
\end{array}
\end{align}
where $p\in (0,1]$ and $(\bA_{\rm txt},\bm b_{\rm txt})\in \R^{m_{\rm txt}\times n}\times \R^{m_{\rm txt}}$ for ${\rm txt}\in \{{\rm val,tr}\}$.
Notice that the form of the above problem slightly differs from that of \eqref{eqn:bilevel} in that $\lambda_i$ is replaced with $\exp(\lambda_i)$ for each~$i$.
  This is because positive hyperparameters, in particular $\lambda_1$, are actually desirable as outputs.
  With this manipulation, the nonnegative constraint $\exp(\lambda_i)\ge \0$, which corresponds to $\lambda_i\ge \0$ in \eqref{eqn:bilevel}, is clearly fulfilled and thus removed.

   For the sake of examining the accuracy of solutions obtained by solving the above problem, we use
  $\|\bA_{\rm te}\w-\bm b_{\rm te}\|_2^2$ as a test error function, where $\bA_{\rm te}\in \R^{m_{\rm te}\times n}$ and $\bm b_{\rm te}\in \R^{m_{\rm te}}$.
The data matrices and vectors $\bA_{\{{\rm val,tr,te}\}}, \bm b_{\{{\rm val,tr,te}\}}$ are taken from UCI machine learning repository\,\cite{Lichman:2013}:
{\bf Facebook} Comment Volume ($\bar{m}=40949$, $n=53$), 
{\bf Insurance} Company Benchmark ($\bar{m}=9000$, $n= 85$),
{\bf Student} Performance for a math exam ($\bar{m}={395}$, $n= 272$)\footnote{The original dataset has $n= 32$, but the feature size is increased by adding new features: interaction effects generated by pairwise products among some features for each sample.},
{{\bf BodyFat} ($\bar{m}=336$, $n= 14$), 
and
{\bf CpuSmall} ($\bar{m}=8192$, $n= 12$)} are from UCI machine learning repository \cite{Lichman:2013}.
The $\bar{m}$ samples are divided into 3 groups
(training, validation and test samples) with the same sample size {$\lceil\bar{m}/3 \rceil$}.
Hence, $m_{\{{\rm val,tr,te}\}}=\lceil\bar{m}/3 \rceil$.

\subsubsection{Experimental conditions}
{\paragraph{Method for solving the smoothed subproblem\,\eqref{eqn:smoothprob}:}
Algorithm~\ref{alg:smoothing} requires an $\hat{\varepsilon}$-approximate KKT point of \eqref{eqn:smoothprob} in Step~2 at every iteration.
To compute such a point, we present an algorithm using implicit functions.
Several past works also employed similar approaches based on implicit functions for hyperparameter optimization. For example, see \citep{maclaurin2015gradient,pmlr-v48-pedregosa16,franceschi2018bilevel}.

We only explain the algorithmic framework of the proposed implicit function based method, leaving the precise description to Algorithm~\ref{alg_subproblem0} in Appendix~\ref{appendix:B1}.
At every iteration, Algorithm~\ref{alg_subproblem0} locally represents problem\,\eqref{eqn:smoothprob} as problem having the hyperparameter $\blambda$ as variables by means of implicit functions defined over the $\blambda$-space.  
The implicit function, say $\w(\cdot)$, is defined on some open set $U$ and expresses a solution set for the smoothed lower-level problem 
$\min_{\bm w \in \R^n} G(\bm w,\bar{\blambda})+\lambda_1\varphi_{\mu}(\w)$. Namely,
we have $\displaystyle\nabla_{\w} G(\bm w(\blambda),\bar{\bm \lambda})+ \lambda_1 \nabla \varphi_{\mu}(\bm w(\blambda))  = \bm 0$ for all $\blambda\in U$.
We then solve the reformulated problem\,\eqref{eqn:smoothprob} that is described in terms of $\blambda$ by the quasi-Newton method\,\citep{nocedal2006numerical}.
Note that, though it is difficult in general to know the concrete form of the implicit function $\w(\cdot)$,
we can compute the gradient $\nabla\w$ in virtue of the implicit function theorem, which enables us to perform gradient based methods like the quasi-Newton method for solving problems that are described in terms of $\w(\cdot)$.

Actually, the efficiency of this approach relies on how rapidly and accurately $\w(\blambda)$ is computed for a given $\blambda$. To this end, we employ a certain modified Newton method, which was originally proposed by \citet{lai2011unconstrained}. See Appendix~\ref{appendix:B2} for details.

Moreover, as a starting point of the algorithm,
we use a solution of the smoothed problem\,\eqref{eqn:smoothprob} at the previous iteration of Algorithm~\ref{alg:smoothing}, aiming for the so-called hot-start effect.}
\paragraph{Other algorithms for comparison:}
For the sake of comparison, we also implement Bayesian optimization and the gridsearch method. 
We use \texttt{bayesopt} in MATLAB with ``MaxObjectiveEvaluations=30'' for Bayesian optimization.
{In gridsearch, we search for the best value of $\|\bA_{\rm val}{\w}-\bm b_{\rm val}\|_2^2$ among 30 grids $\lambda =10^{-4},10^{-4+\frac{8}{29}},\cdots,10^{4-\frac{8}{29}},10^{4}$ for problem\,\eqref{al:0913}.
At each iteration of \texttt{bayesopt} and gridsearch, we
make use of Matlab built-in solver~\texttt{fmincon} so as to solve the lower-level problem of \eqref{al:0913} with a given $\bm \lambda$. 

\paragraph{Parameter setting and termination criteria:}
The smoothing parameter in Algorithm~\ref{alg:smoothing}
is initialized as $\mu_0=1$ and updated by $\mu_{k+1}=\min(0.9\mu_k,10\mu_k^{1.3})$.
The smoothed subproblem~\eqref{eqn:smoothprob} is solved as exactly as possible by fixing $(\hat{\varepsilon}_{0},\beta_0)$ to $(10^{-6},1)$.
As for the termination criteria of Algorithm~\ref{alg:smoothing}, writing a resulting solution as $\w^{\ast}$,
we stop it if the SB-KKT conditions \eqref{eqa:1214-1},
\eqref{eqa:1214-2} and \eqref{eqa:1214-3}
are within the error of $\epsilon:=10^{-3}$.
We also check whether the other SB-KKT conditions\,\eqref{eqa:1214-6}-\eqref{eqa:1214-5} are satisfied.
The default setting of \texttt{bayesopt} is employed. 
Time limits of all the algorithms are set to 600 seconds. }

\subsubsection{Numerical results for problem~\eqref{al:0913} with fixed data size}
{We first show the results of applying the three algorithms to problem~\eqref{al:0913} with $p=1,0.8,0.5$. 
The algorithms are run for 5 times from different starting points $(\blambda^0,\w^0)$ generated in the manner that $\blambda^0$ is set to $\0$ and $\w^0$ is chosen randomly from $[-5,5]^n$.
All the results are summarized in Table~\ref{tbl:comparison_full}, where the value $\mbox{Err}_{\rm te}$ indicates the averaged values of
$\|\bA_{\rm te}\w-\bm b_{\rm te}\|^2$ over 5 runs. The value $\mbox{Err}_{\rm val}$ stands for the averaged value of $\|\bA_{\rm val}\w-\bm b_{\rm val}\|^2$.
The value ``sparsity'' means the ratio of zero elements in the obtained solution $\bm w \in \R^n$, i.e., ${\rm sparsity}=\left|\{i\mid w_i=0\}\right|/n$ and hence, the solution with sparsity$\approx 1$  is very sparse.
We denote by {\rm time(sec)} the spent time from the start to the termination.
In the experiments, for each $i$, we regarded $w_i$ as zero if $|w_i|\le 10^{-4}\max_{1\le i\le n}|w_i|$.
The best (smallest) values of $\mbox{Err}_{\{\rm te,val\}}$ and time(sec), among the three algorithms are displayed in bold.

From the table,  
there are significant differences in time(sec) of the three algorithms, while ${\rm Err}_{\rm te}$ and ${\rm Err}_{\rm val}$ seem comparative. In particular, Algorithm~\ref{alg:smoothing} tends to be the fastest.
Indeed, it attains the best values in time(sec) for 10 out of 15 problem-instances, seven of which moreover achieve the best values in ${\rm Err}_{\rm val}$.
For example, for {\bf Facebook} with $p=1$, it computes a solution with ${\rm Err_{\rm val}}=6.474$ by about 17 seconds, while \texttt{bayesopt} and gridsearch do solutions with ${\rm Err_{\rm val}}\ge 6.476$ after spending more than 40 seconds.
Thus, Algorithm~\ref{alg:smoothing} is likely to be the most effective among the three on seeking $(\w,\blambda)$ with good ${\rm Err}_{\rm val}$.
{\redcolor
  As pointed out by a reviewer, \texttt{bayesopt} actually found the final solutions or close solutions earlier than the recorded time on the table.
%
Nonetheless, in many instances, Algorithm~\ref{alg:smoothing} reached the final solution more quickly than \texttt{bayesopt} found such a solution. 
  We refer readers to 
 Table\,\ref{tbl1:appendix} 
 in Appendix~\ref{sec:appC}, which shows the first time of \texttt{bayesopt} for finding a solution which attains the final best observed objective value, i.e., validation value. Also see Figure\,\ref{Fig;0618-1} that depicts the time-series of the best observed objective value of \texttt{bayesopt} for the problem organized with the data sets of {\bf Student} and {\bf CpuSmall}.}

From the values of sparsity, the problems with smaller $p$ tends to output sparser solutions. For example, 
Algorithm~\ref{alg:smoothing} outputs a solution with ${\rm sparsity}\ge 0.9$ for {\bf Facebook} with $p=0.5$, while ${\rm sparsity}\le 0.3$ for $p=0.8,1$. 
Nevertheless, sparsity of Algorithm~\ref{alg:smoothing} is 0.00
for {\bf BodyFat} with $p=0.8$, while 
${\rm sparsity}=0.7, 0.07$ for $p=0.5,1$, respectively. 
In this case, Algorithm~\ref{alg:smoothing} might fall into a local optimum with small $E_{\rm val}$ at the expense of sparsity.}

\subsubsection{Performance with Varied Data Size}
{Changing the data sizes of {\bf Student} and {\bf Facebook} datasets, we make comparison of the performances of Algorithm~\ref{alg:smoothing}, \texttt{bayesopt}, and gridsearch.

  We first examine how ${\rm Err}_{\rm te}$, ${\rm Err}_{\rm val}$, and time(sec) of the three algorithms change against the sample size, $\hat{m}$, of {\bf Facebook}.
  The sample size $\hat{m}$ is increased from $\frac{1}{2}\bar{m}$ to $\bar{m}$ by $\frac{1}{10}\bar{m}$ with $\bar{m}\approx 40000$.
  We apply the algorithms using $\ell_{0.8}$ regularizer to these problems with a varied sample size. The obtained results are depicted in Figure~\ref{fig:lasst}.
  Figures~\ref{fig:last1} and \ref{fig:last2} indicates that the computed test and validation values ${\rm Err}_{\rm te}$ and ${\rm Err}_{\rm val}$ behave analogously as $\hat{m}$ increases. There are no crucial differences among those values. However, from Figure~\ref{fig:last3},
  computation time, time(sec), of \texttt{bayesopt} grows more rapidly than the others. This may be because 
  \texttt{bayesopt} has to search a wider region as the sample size grows.
  In contrast, the values of time(sec) of Algorithm~\ref{alg:smoothing} and gridsearch grow moderately. In particular, Algorithm~\ref{alg:smoothing} is the fastest for most cases.

 \begin{figure}[htbp]
 \begin{minipage}{.5\hsize}
 \centering
 \includegraphics[scale = 0.5]{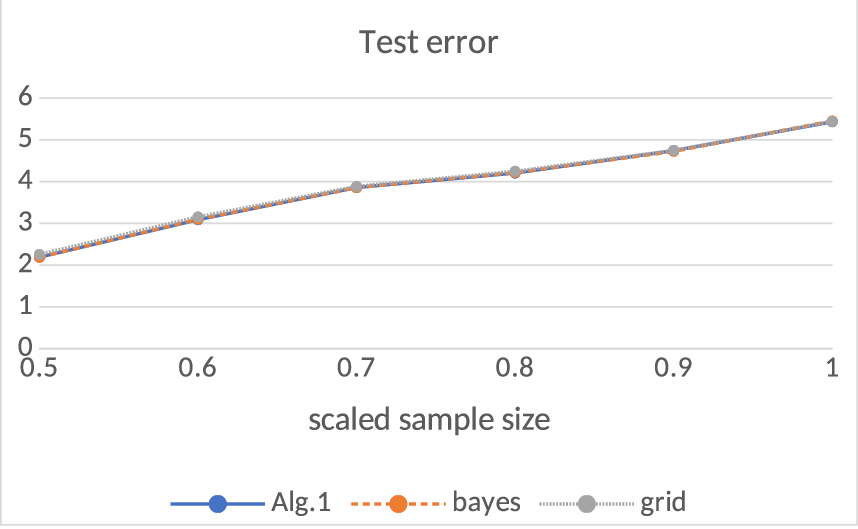}
 \subcaption{${\rm Err}_{\rm te}$ vs scaled sample size $\hat{m}/\bar{m}$
 }
 \label{fig:last1}
 \end{minipage}
 \begin{minipage}{.5\hsize}
 \centering
 \includegraphics[scale = 0.5]{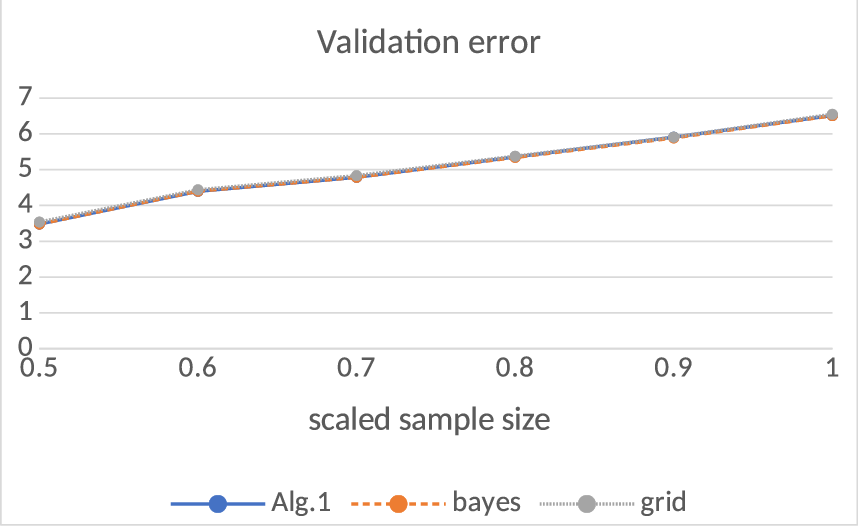}
 \subcaption{${\rm Err}_{\rm val}$ vs scaled sample size $\hat{m}/\bar{m}$
 }
 \label{fig:last2}
 \end{minipage}  \vspace{1em}\\
 \centering
\begin{minipage}{.5\hsize}
 \centering
 \includegraphics[scale = 0.5]{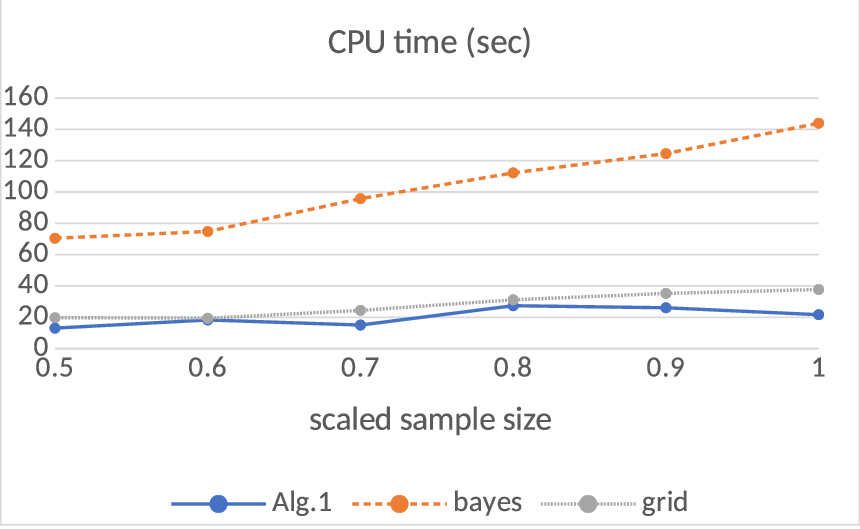}
 \subcaption{time(sec) vs scaled sample size $\hat{m}/\bar{m}$
 }
 \label{fig:last3}
 \end{minipage}
 
   \caption{Performance of Algorithm~\ref{alg:smoothing}, \texttt{bayesopt}, and gridsearch using $\ell_{0.8}$ regularizer for
       {\bf Facebook} with fixed feature size $n=53$ and varying sample size $\hat{m}$
       ($\bar{m}=40949$); Alg.1: Algorithm~\ref{alg:smoothing}, bayes: \texttt{bayesopt}, grid: gridsearch}
\label{fig:lasst}
 \end{figure}

   Next, we investigate the performances of the algorithms by varying the feature size, $\hat{n}$, of {\bf Student}. As in the above experiment, we use $\ell_{0.8}$ regularizer.
   The feature size $\hat{n}$ is increased from $\frac{1}{2}n$ to $n$ by $\frac{1}{10}n$ with $n=272$.
   The obtained results are shown in Figure~\ref{fig:lasst-2}.
   Algorithm~\ref{alg:smoothing} successfully attains better values in all time(sec), $\mbox{Err}_{\rm te}$, and $\mbox{Err}_{\rm val}$ than \texttt{bayesopt} and gridsearch
   for $n\ge 0.7\hat{n}$. From Figures~\ref{fig:last1-2} and \ref{fig:last2-2}, \texttt{bayesopt} seems stuck in a local optimum with larger
$\mbox{Err}_{\rm val}$ and $\mbox{Err}_{\rm te}$ for $n\ge 0.8\hat{n}$.
   From Figure~\ref{fig:last3-2}, time(sec) for \texttt{bayesopt} and gridsearch grow more rapidly than ours as $\hat{n}$ increases.

The above two experiments suggest that, compared with gridsearch and Bayesian optimization, Algorithm~\ref{alg:smoothing} is unlikely to be affected by growth of the data size.

 \begin{figure}[htbp]
 \begin{minipage}{.5\hsize}
 \centering
 \includegraphics[scale = 0.5]{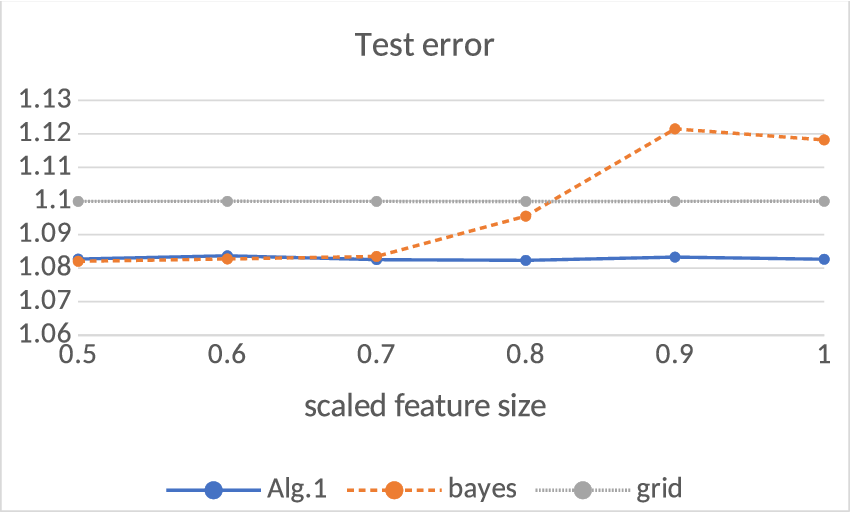}
 \subcaption{${\rm Err}_{\rm te}$ vs scaled feature size $\hat{n}/n$
 }
 \label{fig:last1-2}
 \end{minipage}
 \begin{minipage}{.5\hsize}
 \centering
 \includegraphics[scale = 0.5]{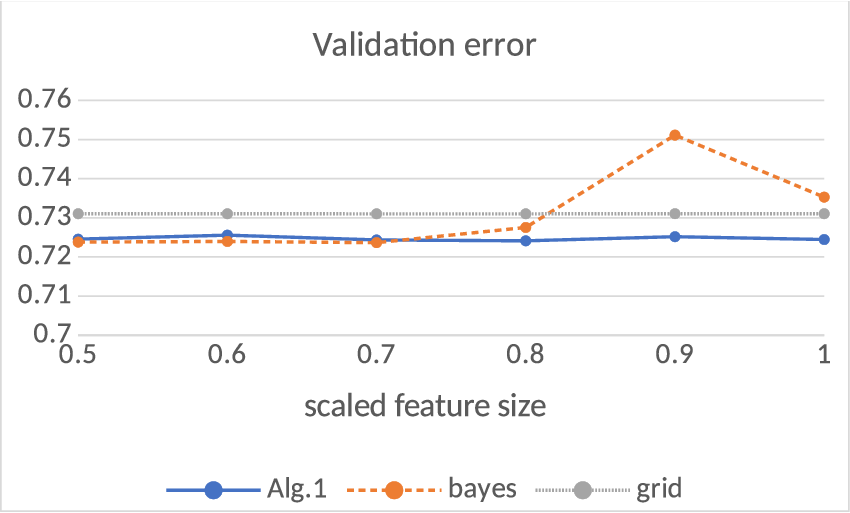}
 \subcaption{${\rm Err}_{\rm val}$ vs scaled feature size $\hat{n}/n$
 }
 \label{fig:last2-2}
 \end{minipage}  \vspace{1em}\\
 \centering
\begin{minipage}{.5\hsize}
 \centering
 \includegraphics[scale = 0.5]{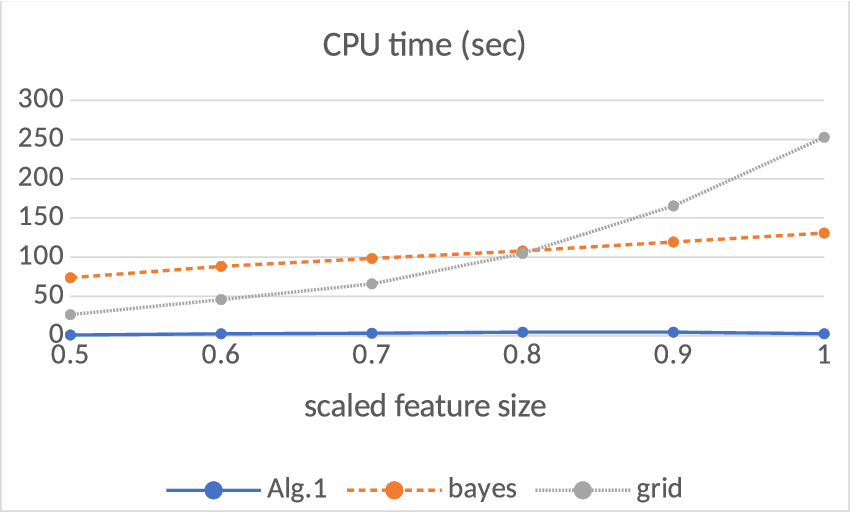}
 \subcaption{time(sec) vs scaled feature size $\hat{n}/n$
 }
 \label{fig:last3-2}
 \end{minipage}
 
   \caption{Performance of Algorithm~\ref{alg:smoothing}, \texttt{bayesopt}, and gridsearch using $\ell_{0.8}$ regularizer for
{\bf Student} with varying feature size $\hat{n}$ ($n=272$) and fixed sample size $\bar{m}=395$; Alg.1: Algorithm~\ref{alg:smoothing}, bayes: \texttt{bayesopt}, grid: gridsearch} 
 \label{fig:lasst-2}
 \end{figure}   
}

\begin{landscape}
  \begin{table}[h]
\centering
\caption{
  Comparison of Algorithm~\ref{alg:smoothing}, \texttt{bayesopt} in MATLAB and gridsearch in terms of squared errors (validation error ${\rm Err}_{\rm val}$ and test error ${\rm Err}_{\rm te}$), CPU times (time(sec)), and sparsities (sparsity).
Here, ${\rm sparsity}:=\left|\{i\mid w_i=0\}\right|/n$ and hence 
a solution is sparser as ``{\rm sparsity}'' is closer to 1. 
The best values in ${\rm Err}_{\rm val}$, ${\rm Err}_{\rm te}$, and time(sec) are displayed in bold.}
\label{tbl:comparison_full}
	{
\scalebox{1}{
\begin{tabular}{lrrrrrrrrrrrrrrrrr}
\toprule
\multicolumn{2}{c}{Data}&
\multicolumn{4}{c}{{\Alg}}&
\multicolumn{4}{c}{\texttt{bayesopt} in MATLAB}&
\multicolumn{4}{c}{grid search}\\
\cmidrule(lr){1-2}
\cmidrule(lr){3-6}
\cmidrule(lr){7-10}
\cmidrule(lr){11-14}
\cmidrule(lr){15-18}
\multicolumn{1}{c}{name}&
\multicolumn{1}{c}{$p$}&
\multicolumn{1}{c}{$\mbox{Err}_{\rm te}$}&
\multicolumn{1}{c}{$\mbox{Err}_{\rm val}$}&
\multicolumn{1}{c}{time (sec)}&
\multicolumn{1}{c}{sparsity}&
\multicolumn{1}{c}{$\mbox{Err}_{\rm te}$}&
\multicolumn{1}{c}{$\mbox{Err}_{\rm val}$}&
\multicolumn{1}{c}{time (sec)} &
\multicolumn{1}{c}{sparsity}&
\multicolumn{1}{c}{$\mbox{Err}_{\rm te}$}&
\multicolumn{1}{c}{$\mbox{Err}_{\rm val}$}&
\multicolumn{1}{c}{time (sec)}&
\multicolumn{1}{c}{sparsity}\\
\midrule
{\bf Facebook} &1 	&	5.399 	&	{\bf 6.474} 	&	{\bf 17.399} 	&	0.057 	
&	5.401 	&	6.476 	&	146.144 	&	0.249 	&	{\bf 5.394} 	&	6.483 	&	43.738 	&	0.264 \\		
					& 		0.8 	&	5.431 	&	6.512 	&	{\bf 22.242} 	&	0.245 	
					&	{\bf 5.411} 	&	{\bf 6.499} 	&	137.764 	&	0.143 	
					&	5.439 	&	6.545 	&	37.021 	&	0.113 \\
					& 		0.5 	&	5.455 	&	6.550 	&	16.820	&	0.925
					 	&	{\bf 5.452}	&	{\bf 6.535} 	&	93.967 	&	0.596
					 	&	5.704 	&	6.747 	&	{\bf 16.514} 	&	0.925 \\ \hline
{\bf Insurance}      			& 		1	&	 {\bf 87.837} 	&	{\bf 95.764} 	&	33.077 	&	0.035 	
	&	87.842 	&	{\bf 95.764} 	&	55.304 	&	0.435 	
	&	87.843 	&	95.765 	&	{\bf 19.779} 	&	0.435  \\
					& 		0.8	&	87.891 	&	95.676 	&	32.465 	&	0.188 	
					&	87.882 	&	{\bf 95.634} 	&	75.384 	&	0.287 	
					&	{\bf 87.872} 	&	95.806 	&	{\bf 19.388} 	&	0.329  \\
					& 		0.5 	&	88.625 	&	95.562 	&	44.904 	&	0.859 	
					&	{\bf 88.101} 	&	{\bf 95.526} 	&	45.359 	&	0.675 	
					&	88.211 	&	96.592 	&	{\bf 5.453} 	&	0.871  \\ \hline

{\bf Student} 				& 		1	&	{\bf 1.127} 	&	{\bf 0.778} 	&	{\bf 10.586} 	&	0.625
 	 	&	1.147 	&	0.785 	&	217.415 	&	0.032 	
 	 	&	1.147 	&	0.785 	&	81.766 	&	0.066 \\

					& 	0.8 	&	1.083 	&	{\bf 0.724} 	&	{\bf 2.348} 	&	0.996 	
						&	{\bf 1.082} 	&	{\bf 0.724} 	&	409.382 	&	0.004 	
						&	1.100 	&	0.731 	&	241.298 	&	0.004 \\

					& 	0.5	&	{\bf 1.082} 	&	{\bf 0.724} 	&	{\bf 3.618} 	&	0.996 	
					&	1.125 	&	0.772 	&	152.826 	&	0.829 	&	2.848 	&	1.120 	&	9.520 	&	0.974 \\ \hline
{\bf BodyFat} 				&		1	&	0.277 	&	{\bf 0.209} 	&	{\bf 0.068} 	&	0.071 	
	&	{\bf 0.276} 	&	0.210 	&	10.253 	&	0.714 	&	0.279 	&	0.216 	&	0.820 	&	0.714 \\
                   			& 		0.8	&	0.288 	&	{\bf 0.179} 	&	{\bf 0.203} 	&	0.000
                   			 		&	{\bf 0.280} 	&	0.184 	&	10.567 	&	0.571 	&	0.287 	&	0.182 	&	0.808 	&	0.429 \\
					& 		0.5 	&	0.582 	&	0.267 	&	{\bf 0.395} 	&	0.714 
						&	0.353 	&	0.229 	&	7.279 	&	0.286
						&	{\bf 0.316} 	&	{\bf 0.256} 	&	0.931 	&	0.286 \\ \hline

{\bf CpuSmall} 				&1 	&	132326	&	{\bf 130981} 	&	11.299 	&	0.083 	
&	{\bf 131834}	&	131123 	&	21.334 	&	0.917 	
&	132261	&	130983	&	{\bf 1.164} 	&	0.917 \\
					& 0.8 	&	132339 	&	{\bf 130982}	&	{\bf 0.741} 	&	0.250 
						&	{\bf 131770} 	&	131187 	&	19.909 	&	0.917 	
						&	132205 	&	130991 	&	1.240 	&	0.750 \\
					& 0.5	&	132093	&	{\bf 131058} 	&	{\bf 0.672} 	&	0.250 	
					&	{\bf 131754}	&	131234	&	17.619 	&	0.917 
				  &	132127 	&	131059 	&	1.514 	&	0.750 \\ 
\bottomrule
\end{tabular}}}
\end{table}
\end{landscape}

  \subsubsection{Performance as the smoothing parameter $\mu$ decreases}\label{experiment:mu}
{We examine impact of the smoothing parameter $\mu$ on test error of solutions of the smoothed subproblems\,\eqref{eqn:smoothprob}. 
Figures~\ref{fig:last3-3}-\ref{fig:last5-3} depict the growth behavior of the test error in the final stage of Algorithm~\ref{alg:smoothing} for the problems of {\bf Facebook}, {\bf BodyFat}, and {\bf Insurance}.

From the figures, the test errors for the three problems do not vary significantly.
Taking into account that the smoothed subproblem\,\eqref{eqn:smoothprob} is more difficult as $\mu$ becomes smaller,
it may be good strategy to stop the algorithm earlier than convergence to a SB-KKT point.	
This is also indicated by the fact that, around $\mu=0.01$, the algorithm attains solutions with better test errors than the solutions upon termination for {\bf Facebook} and {\bf BodyFat}.}

\begin{figure}[htbp]
 \begin{minipage}{.5\hsize}
 \centering
 \includegraphics[scale = 0.48]{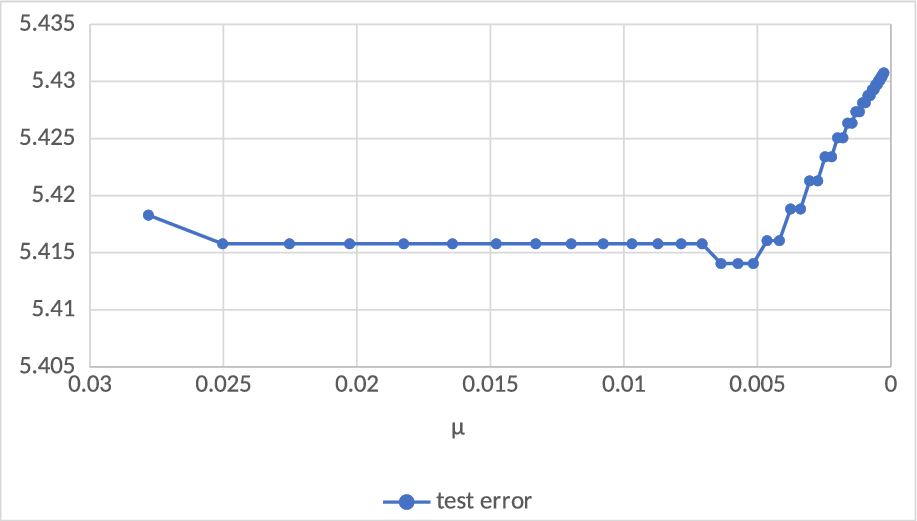}
 \subcaption{{\bf Facebook}
 }
 \label{fig:last3-3}
 \end{minipage}
 \begin{minipage}{.5\hsize}
 \centering
 \includegraphics[scale = 0.48]{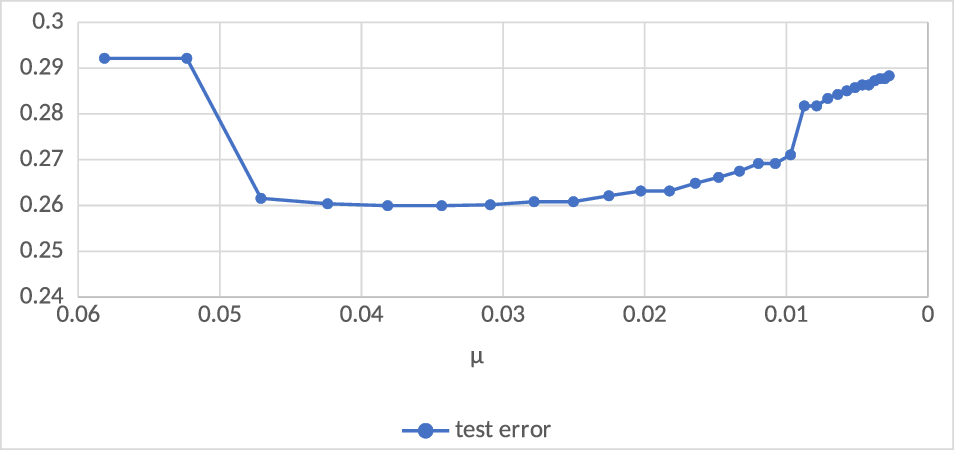}
 \subcaption{{\bf BodyFat}
 }
 \label{fig:last4-3}
 \end{minipage}
 \vspace{0.5em}\\
 \centering
  \begin{minipage}{.5\hsize}
 \centering
 \includegraphics[scale = 0.48]{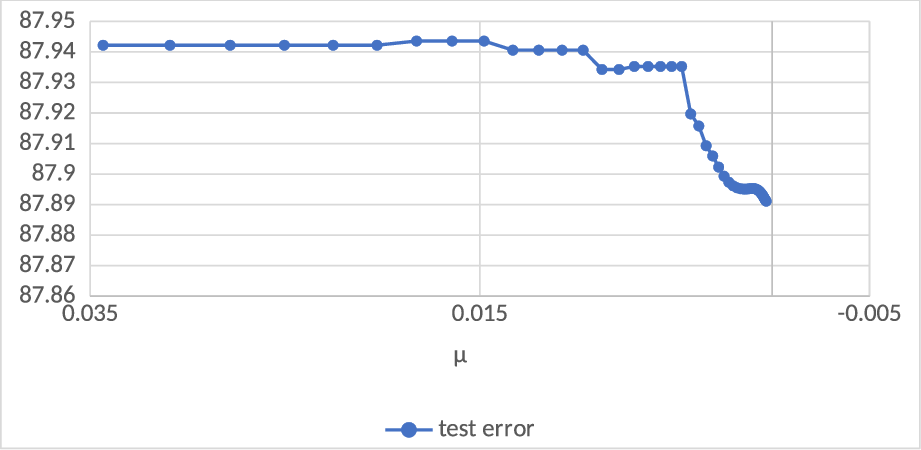}
 \subcaption{{\bf Insurance}}
 \label{fig:last5-3}
  \end{minipage}
     \caption{Change of test error for $\ell_{0.8}$ regularizer as 
  $\mu$ decreases; The horizontal axis: a smoothing parameter $\mu$; The vertical axes: test error}
 \label{fig:lasst-3}
 \end{figure} 

{\subsection{Linear regression problem with multiple hyperparameters}
Next, we solve the following problem that possesses multiple hyperparameters:
\begin{align}\label{al:0913-2}
\begin{array}{rll}
\displaystyle{\min_{\bw,\blambda}}&\ &\|\bA_{\rm val}{\w}-\bm b_{\rm val}\|_2^2 \\ 
\mbox{s.t. }&  &\bw \in\displaystyle{\mathop{\rm argmin}_{\hat{\bw}}}\ \left(\|\bA_{\rm tr}\hat{\w}-\bm b_{\rm tr}\|_2^2 + {\exp(\lambda_1)\|\hat{\bw}\|_p^p}
+\hwCw
\right),
\end{array}
\end{align}
where
$\C(\bm \bar{\blambda}):={\rm Diag}(\exp(\lambda_i))_{i=2}^{n+1}$ being positive definite and $\bA_{\{\rm val,tr,te\}}$ and $\bm b_{\{\rm val,tr,te\}}$ are the ones used in the previous experiments in Section~\ref{sec:single}.
\subsubsection{Experimental conditions}
\tred{We set $p=0.5$ in problem\,\eqref{al:0913-2}}.
The experimental conditions are almost the same as those for problem~\eqref{al:0913}. The main differences are as follows:
We make comparison of Bayesian Optimization~\texttt{bayesopt}
\tred{with ``MaxObjectiveEvaluations=300''} and Algorithms~\ref{alg:smoothing} using the two algorithms for solving subproblems~\eqref{eqn:smoothprob}: The first one is the implicit function approach as in the previous experiments and the second one is \texttt{fmincon}, where we opt for the SQP method and set ``MaxIterations$=10^7$''.
Though we also implemented gridsearch seeking a solution over $30^n$ grids $\blambda\in \{10^{-4},10^{-4+\frac{8}{29}},\cdots,10^{4-\frac{8}{29}},10^{4}\}^n$,
the obtained results were actually quite poor because the number of grids, which was larger than $30^{10}$, was too huge to search. We thus omit those results with gridsearch.
Finally, according to the observation in the last experiment in Subsection\,\ref{experiment:mu}, we terminate Algorithm~\ref{alg:smoothing} when $\mu_k\le 0.01$.

\subsubsection{Numerical results for problem~\eqref{al:0913-2}}
Table~\ref{tbl2:comparison_full} summarizes the obtained results of applying the two types of Algorithms~\ref{alg:smoothing} and \texttt{bayesopt} to problem\,\eqref{al:0913-2}.
In the table, we denote by $\sharp \blambda$ the number of hyperparameters $\blambda$ in each problem.
Moreover, {\algone} stands for Algorithm~\ref{alg:smoothing} using the implicit function approach and {\algtwo} does the one using \texttt{fmincon}.
The hyphens ``--'' in the line of {\bf Facebook} for {\algtwo} 
indicate that \texttt{fmincon}, which is used in {\algtwo}, terminates with an infeasible solution of the smoothed problem\,\eqref{eqn:smoothprob}.

From the table, compared with the results for the single-hyperparameter bilevel problem\,\eqref{al:0913}, \texttt{bayesopt} does not work well. 
For four out of the five problems, it cannot terminate within the time-limit 600 seconds. The qualities of the output solutions of \texttt{bayesopt} upon termination are
also not good in values of ${\rm Err}_{\rm val}$
and ${\rm Err}_{\rm te}$.
\tred{For the sake of completeness, as well as the previous experiment, we examined
the first time when the best observed objective values, i.e., validation values of \texttt{bayesopt} were found.
Refer to Table\,\ref{tbl2:appendix} in Appendix\,\ref{sec:appC}.
Also see Figure\,\ref{Fig;0618-2} for graphs depicting the time-series of the best observed objective values concerning the data sets of {\bf Student} and {\bf CpuSmall}.
}

\tred{In contrast to \texttt{bayesopt},} the two Algorithms~\ref{alg:smoothing} with different subroutines show better performance.
There are notable differences between performances of {\algone} and {\algtwo}.
While {\algone} seems to fall into non-sparse solutions, but with good ${\rm Err}_{\{\rm te,val\}}$ for {\bf Insurance}, {\bf BodyFat}, and {\bf CpuSmall}, {\algtwo} finds good solutions balancing in sparsity, ${\rm Err}_{\rm te}$, and ${\rm Err}_{\rm val}$ for all the same data sets.
This may be because {\algone} computes a solution of problem\,\eqref{eqn:smoothprob} by remaining in the feasible set, namely, the solution set of the smoothed lower level problem.
This behavior may cause {\algone} to miss a chance of broadly seeking sparse solutions.} Meanwhile, {\algtwo} using the SQP method in \texttt{fmincon} tends to approach a solution of \eqref{eqn:smoothprob} from the outside of the feasible set,
which often leads to a good solution even in sparsity.

\begin{landscape}
  \begin{table}[h]
\centering
\caption{  
 Comparison of \texttt{bayesopt} and Algorithm	s~\ref{alg:smoothing} using the implicit function approach and \texttt{fmincon} as subroutines for solving subproblem\,\eqref{eqn:smoothprob} in Step~2 in terms of squared errors (validation error ${\rm Err}_{\rm val}$ and test error ${\rm Err}_{\rm te}$), CPU times (time(sec)), and sparsities (sparsity).
Here, ${\rm sparsity}:=\left|\{i\mid w_i=0\}\right|/n$ and hence 
a solution is sparser as ``{\rm sparsity}'' is closer to 1.
{\algone} stands for Algorithm~\ref{alg:smoothing} using the implicit function approach and
{\algtwo} does the one using \texttt{fmincon}.
The notation $\sharp \blambda$ stands for the number of hyperparameters in each problem. 
Moreover, the best values in ${\rm Err}_{\rm val}$, ${\rm Err}_{\rm te}$, and time(sec) are displayed in bold.
The hyphens ``--'' for {\bf Facebook} indicate that \texttt{fmincon}, which is used in {\algtwo}, terminates with an infeasible solution of the smoothed problem\,\eqref{eqn:smoothprob}.
The algorithms with ``600'' seconds stopped at the time-limit.
}
\label{tbl2:comparison_full}
	{
\scalebox{1}{
\begin{tabular}{lrrrrrrrrrrrrrrrrr}
\toprule
\multicolumn{2}{c}{Data}&
\multicolumn{4}{c}{{\algone}}&
\multicolumn{4}{c}{{\algtwo}}&
\multicolumn{4}{c}{\texttt{bayesopt} in MATLAB}\\
\cmidrule(lr){1-2}
\cmidrule(lr){3-6}
\cmidrule(lr){7-10}
\cmidrule(lr){11-14}
\multicolumn{1}{c}{name}&
\multicolumn{1}{c}{$\sharp\blambda$}&
\multicolumn{1}{c}{$\mbox{Err}_{\rm te}$}&
\multicolumn{1}{c}{$\mbox{Err}_{\rm val}$}&
\multicolumn{1}{c}{time (sec)}&
\multicolumn{1}{c}{sparsity}&
\multicolumn{1}{c}{$\mbox{Err}_{\rm te}$}&
\multicolumn{1}{c}{$\mbox{Err}_{\rm val}$}&
\multicolumn{1}{c}{time (sec)} &
\multicolumn{1}{c}{sparsity}&
\multicolumn{1}{c}{$\mbox{Err}_{\rm te}$}&
\multicolumn{1}{c}{$\mbox{Err}_{\rm val}$}&
\multicolumn{1}{c}{time (sec)}&
\multicolumn{1}{c}{sparsity}\\
\midrule
{\bf Facebook}				&	54 & 	{\bf 5.404} 	&	6.478 	&	{20.247}	&	0.038 	
					        &	--&	--&	-- &	-- 
&	7.629 	&	8.780 	&	600.000 	&	0.000 \\\hline
{\bf Insurance}					& 86 	&	{\bf 87.920}	&	95.694 	&	{50.714} 	&	0.000 
&	88.340 	&	{\bf 94.604} 	&	{\bf 4.473}	&	0.988
						& 98.000 	&	107.000 	&	600.000 	&	0.000\\\hline
{\bf Student}				& 273&	{\bf 1.132} 	&	{\bf 0.771} 	&	{\bf 1.451} 	&	0.368 
&	1.142 	&	0.786 	&	71.775 	&	0.670
&	21.002 	&	18.324 	&	600.000 &0.000\\\hline
{\bf BodyFat}					& 15 	&	{0.531} 	&	0.243 	&	{\bf 0.072} 	&	0.000 	
&	{\bf 0.286} 	&	{\bf 0.130} 	&	0.695 	&	0.933
&	46.675 	&	48.815 	&	455.230 	&	0.000\\\hline
{\bf CpuSmall}					& 13 	&	{132780}	&	131130 	&	{6.222} 	&	0.000
&	{\bf 131940}	&	{\bf 128540} 	&	{\bf 0.658} 	&	0.692
					 	&	156420 	&	151670 	&	600.000 	&	1.000 \\
\bottomrule
\end{tabular}}}
\end{table}
\end{landscape}

\section{Discussion on extension to other nonsmooth regularizers}\label{sec:discuss}
          In this section, we discuss the extension of the SB-KKT conditions from
          hyperparameter optimization of $\ell_p$-regularizers to those of other nonsmooth and nonconvex regularizers.
        Let $\Theta:[0,\infty)\to [0,\infty)$ be a function such that it is concave and continuously differentiable on $(0,\infty)$ and $\Theta(0)=0$.
            Many sparse regularizers are representable in terms of $\Theta$. Indeed, $\sum_{i=1}^n\Theta(|w_i|)$ reduces to
            the $\ell_p$- and $\log$-regularizers, SCAD, and MCP by selecting $\Theta$ appropriately as follows. Here, $a>1$, $\b>0$, and $\gamma>0$ are prefixed parameters and $x\ge 0$.
            The continuous differentiability of $\Theta$ for SCAD and MCP can be confirmed in view of the formula of $\Theta^{\prime}$:
            \begin{itemize}
            \item $\ell_p$-regularizer $(p\le 1)$ if $\Theta(x):=x^p$;
            \item $\log$-regularizer\,\citep{candes2008enhancing} if $\Theta(x):=\displaystyle \frac{1}{\log(1+\gamma)}\log(1+\gamma x)$;
            \item SCAD\,\citep{fan2001variable} if  
$$
\Theta(x):=\begin{cases}
\b x &\hspace{1em}\mbox{if }x \le \b\\
\displaystyle{-\frac{x^2-2abx +\b^2}{2(a-1)}}&\hspace{1em}\mbox{if }\b \le x \le a\b\\
\displaystyle{\frac{(a+1)b^2}{2}}&\hspace{1em}\mbox{otherwise}.
\end{cases}
$$
The first-order derivative is 
$$
\Theta^{\prime}(x)=\begin{cases}
\b &\hspace{1em}\mbox{if }x \le \b\\
\displaystyle{-\frac{x-a\b}{a-1}}&\hspace{1em}\mbox{if }\b \le x \le a\b\\
0&\hspace{1em}\mbox{otherwise};
\end{cases}
$$
\item MCP\,\citep{zhang2010nearly} if 
$$
\Theta(x):=\begin{cases}
\displaystyle{\b x -\frac{x^2}{2a}} &\hspace{1em}\mbox{if }x\le a\b\\
\displaystyle{\frac{a\b^2}{2}}&\hspace{1em}\mbox{otherwise}.
\end{cases}
$$
The first-order derivative is 
$$
\Theta^{\prime}(x)=\begin{cases}
\displaystyle{\b -\frac{x}{a}} &\hspace{1em}\mbox{if }x\le a\b\\
0&\hspace{1em}\mbox{otherwise}.
\end{cases}
$$
            \end{itemize}
            Consider the following extended formulation from problem\,\eqref{eqn:bilevel}:
        \begin{equation}
 \min_{\w^*_{\bm\lambda},\blambda}\ f(\w^*_{\bm\lambda})\hspace{0.5em}\mbox{s.t.}\hspace{0.5em}\displaystyle
  \w^*_{\bm\lambda} \in \argmin_{\bm w \in \R^n} \left(G(\w,\bar{\blambda})+\lambda_1\sum_{i=1}^n\Theta(|w_i|)\right),\ {\bm \lambda \geq \bm 0}.
\label{eqn:bilevel_ex}
        \end{equation}
       Any local optimum $\w$ of the lower-level problem in the above satisfies  
        \begin{eqnarray} 
          &\frac{\partial G(\w,\bar{\blambda})}{\partial w_i}+{\rm sgn}(w_i)\lambda_1\Theta^{\prime}(|w_i|)=0\ \mbox{  for $i\in \{1,2,\ldots,n\}\setminus I(\w)$},& \label{eq:scaled_first_ex1}\\
          &w_i=0\ \mbox{  for $i\in I(\w)$},&\label{eq:scaled_first_ex2}
        \end{eqnarray}
        which actually correspond with the scaled first order condition\,\eqref{eq:scaled_first}
        after transformations when $\Theta$ is chosen to be the $\ell_p$-regularizer.
We then obtain the following one-level problem that corresponds to \eqref{eqn:surrogate}
\begin{equation}
\displaystyle\min_{\w,\blambda}\ f(\w)\hspace{0.5em}\mbox{s.t.}\hspace{0.5em}\blambda\ge \0,\ \mbox{$\w$ satisfies \eqref{eq:scaled_first_ex1} and \eqref{eq:scaled_first_ex2}}.
\label{eqn:surrogate_ex}
\end{equation}
The following theorem states the SB-KKT conditions for \eqref{eqn:surrogate_ex}, which are the extended version of Theorem\,\ref{thm:optimality}.
Since its proof can be obtained via straightforward extension of that of Theorem\,\ref{thm:optimality} by noting the relation $\left(\Theta(|x|),\Theta^{\prime}(|x|),\Theta^{\prime\prime}(|x|)\right)=
\left(|x|^p,{\rm sgn}(x)p|x|^{p-1},p(p-1)|x|^{p-2}\right)$ for $x\neq 0$ when $\Theta(x)=x^p$ for $x\ge 0$, we omit it.
        \begin{theorem}\label{thm:optimality_ex}
Let $(\w^{\ast},\blambda^{\ast})\in \R^n\times \R^r$ be a local optimum of \eqref{eqn:surrogate_ex}.
Suppose that $\Theta$ is twice continuously differentiable at $|w^{\ast}_i|$ for $i\notin I(\w^{\ast})$.
Then,
$(\w^{\ast},\blambda^{\ast})$ together with some vectors
$\bzeta^{\ast}\in \R^n$ and $\bbeta^{\ast}\in\R^r$ satisfies the 
following conditions under an appropriate constraint qualification concerning the constraints 
$\frac{\partial G(\w,\bar{\blambda})}{\partial w_i}+\lambda_1{\rm sgn}(w_i)\Theta^{\prime}(|w_i|)=0\ \mbox{ ($i\notin I(\w^{\ast})$)}$, $w_i=0\ \mbox{ ($i\in I(\w^{\ast})$)}$
, and $\blambda\ge \bm 0$:
\begin{eqnarray}
    &\nabla f(\w^{\ast})+\sum_{i\notin I(\w^{\ast})}\H_i(\w^{\ast},\blambda^{\ast})\zeta^{\ast}_i=\0,\label{eqa:1214-1_ex}\\
  &\frac{\partial G(\w^{\ast},\bar{\blambda}^{\ast})}{\partial w_i}+\lambda_1^{\ast}{\rm sgn}(w_i^{\ast})\Theta^{\prime}(|w_i^{\ast}|)=0\ \mbox{$(i\notin I(\w^{\ast}))$},
          \label{eqa:1214-2_ex}\\
  & \sum_{i\notin I(\w^{\ast})}{\rm sgn}(w_i^{\ast})
\Theta^{\prime}(|w^{\ast}_i|)\zeta_i^{\ast}=\eta_1^{\ast},\label{eqa:1214-3_ex}\\
&\zeta_i^{\ast}=0\ (i\in I(\w^{\ast})),\label{eqa:1214-6_ex}\\
&\nabla R_i(\w^{\ast})^{\top}\bzeta^{\ast}=\eta_i^{\ast}\ \ (i=2,3,\ldots,r),\label{eqa:1224-4_ex}\\ 
&\0\le \blambda^{\ast}, ~\0 \le \bbeta^{\ast}, ~(\blambda^{\ast})^\top \bbeta^{\ast}=0,\label{eqa:1214-5_ex}
\end{eqnarray}
where 
$$
\H_i(\w,\blambda):=\nabla_{\w}\left(\frac{\partial{G(\w,\bar{\blambda})}}{\partial w_i}\right)+\lambda_1{\rm sgn}(w_i)\Theta^{\prime\prime}(|w_i|){\bm e}^i\in \R^n \ \ (i\notin I(\w^{\ast})).
$$         \end{theorem}
        When $\Theta(x)=x^p$ with $x\ge 0$ and $0<p\le 1$, conditions\,\eqref{eqa:1214-1_ex} and \eqref{eqa:1214-2_ex} premultiplied by ${\rm diag}(\w^{\ast})^2$ and ${\rm diag}(\w^{\ast})$, respectively, are equivalent to \eqref{eqa:1214-1} and \eqref{eqa:1214-2} under the presence of \eqref{eqa:1214-6_ex} and $w_i^{\ast}=0\ (i\in I(\w^{\ast}))$. 
        The above theorem is different from Theorem\,\ref{thm:optimality} in that $\Theta$ is additionally assumed to be $C^2$ at $|w^{\ast}_i|$ for $i\notin I(\w^{\ast})$.
        This is due to the existence of the term $\Theta^{\prime\prime}$ in $\H_i$.
        If $\Theta$ is chosen to correspond to $\ell_p$ or $\log$-regularizer, this assumption always holds. 
        In contrast, if $\Theta$ is selected to correspond to SCAD (resp., MCP), it is equivalent to $w_i^{\ast}\neq \b,a\b$ (resp., $w_i^{\ast}\neq ab$) for $i\notin I(\w^{\ast})$ and thus may fail to hold in general.
        Though it is expected to hold in many instances, we need to do a further research so as to remove or weaken it.

        It is easy to tailor the proposed smoothing method to problems having other regularizers such as SCAD and MCP. Convergence properties similar to the case of using the $\ell_p$-regularizer hold expectedly.
However, proofs for the global convergence to an SB-KKT point in the sense of Theorem\,\ref{thm:optimality_ex} may differ significantly from that in Section~\ref{sec:theory}, because our analysis for the $\ell_p$-regularizer actually relies on the specific forms of the smoothing function
$\varphi_\mu(\bm w)=\sum_{i=1}^n (w_i^2+\,\mu^2)^{\frac{p}{2}}$ and its first- and second-order derivatives.

\paragraph{Further extension:}
Besides the above, there are other directions for extending our results.
One direction is extension to structured sparse regularizers like the group Lasso model\,\citep{yuan2006model}.
Such a model often contains regularizers of the composite form $\sum_{i=1}^l\Theta(\theta_i(\w))$ with $\theta_i:\R^n\to \R\ (i=1,2,\ldots,l)$.
For instances of $\theta_i$, we can set $\theta_i(\w):=\w^{\top}{\bm a^i}$ with ${\bm a^i}\in \R^n$ or $\theta_i(\w):=\w^{\top}\bm K_i\w$ with $\bm K_i\in \R^{n\times n}$ being a symmetric positive definite matrix.

Another interesting direction is extension to problems with matrix variables. \cite{marjanovic2012l_q} considered regularized least square optimization for matrix completion.
For $\bm X\in \R^{r_1\times r_2}$, the regularization term that appears there takes the form of $\|\bm X\|^p_p:=\sum_{i=1}^{\min(r_1,r_2)}\sigma_i(\bm X)^p$ with $0<p\le 1$, where $\sigma_i(\bm X)\ (i=1,2,\ldots,\min(r_1,r_2))$ are the singular values of $X$. If $r_1=r_2$ and $X$ is a diagonal matrix, $\|\bm X\|^p_p$ reduces to the $\ell_p$-regularizer we have considered.  
In \citep{marjanovic2012l_q}, in order to find the best-qualified recovered matrix model, the authors iteratively solved problems involving $\|\bm X\|^p_p$ as a regularizer while varying hyperparameters. A bilevel approach may help to recover a matrix with higher quality faster.
\section{Conclusions}\label{sec:7}
We have proposed a bilevel optimization approach for selecting {the best hyperparameter (regularization parameter) of the $\ell_p$-regularizer}.
The bilevel optimization problem that appears in our approach 
has a nonsmooth and possibly nonconvex $\ell_p$-regularized problem as the lower-level problem. 
For this problem, we have developed the scaled bilevel KKT (SB-KKT) conditions and proposed a smoothing-type method.
Furthermore, we have made analysis on convergence of the proposed algorithm to an SB-KKT point.
Numerical experiments imply that it exhibited performance superior to Bayesian optimization and grid search especially in computational time.

The method/theoretical guarantee can be applicable to hyperparameter learning for classification. 
As a future work, we would like 
to make the algorithm more practical.
For this purpose, 
we may need to integrate some stochastic technique into the proposed algorithm. 
For example,
approximate KKT points computed by approximate gradient and Hessians can be used. In the stochastic setting,
we expect that the SB-KKT conditions will play a significant role in convergence analysis.
%

\subsection*{Acknowledgments}
We thank three anonymous reviewers and the editor for their valuable comments and suggestions.\\
This research was supported by JSPS KAKENHI Grant Numbers 20K19748 and 19H04069 and by JST ERATO Grant Number JPMJER1903.
\appendix
\renewcommand{\theequation}{A.\arabic{equation}}
\renewcommand{\thealgorithm}{A.\arabic{algorithm}}
\renewcommand{\thetheorem}{A.\arabic{theorem}}
\renewcommand{\thefigure}{\Alph{section}.\arabic{figure}} 
\renewcommand{\thetable}{\Alph{section}.\arabic{table}}
\def\thesection{\Alph{section}}
\setcounter{equation}{0}
\setcounter{theorem}{0}
\setcounter{algorithm}{0}
\setcounter{figure}{0}
\setcounter{table}{0}
\section{Omitted Proofs}
In this section, we provide proofs of some lemmas and propositions.
\subsection{Proof of Theorem~\ref{thm:optimality}}\label{appendix:kkt}
Firstly, notice that $(\w^{\ast},\blambda^{\ast})$ is also a local optimum of the following problem:
\begin{align}\label{eqn:surrogate2:app}
\begin{array}{rcc}
\displaystyle\min_{\w,\blambda}&  & f(\w)  \\
\mbox{s.t.} &     &\displaystyle \frac{\partial{G(\w,\bar{\blambda})}}{\partial w_i}+p\,{\rm sgn}(w_i)\lambda_1|w_i|^{p-1}=0\ (i\notin I(\w^{\ast}))\\
            &      &w_i=0\ (i\in I(\w^{\ast}))\\  
           &       &\blambda\ge \0.
\end{array}
\end{align}
Actually, 
this fact is easily confirmed by noting that 
$(\w^{\ast},\blambda^{\ast})$ is also feasible to \eqref{eqn:surrogate2:app} and the feasible region of 
\eqref{eqn:surrogate}
is larger than that of \eqref{eqn:surrogate2:app}.
Hence, under an appropriate constraint qualification such as
the linearly independent constraint qualification associated to \eqref{eqn:surrogate2:app}, the KKT conditions for \eqref{eqn:surrogate2:app} hold at $(\w^{\ast},\blambda^{\ast})$, i.e.,
there exist some vectors 
$\hat{\bzeta}^{\ast}:=(\hzeta_1^{\ast},\hzeta_2^{\ast},\ldots,\hzeta_n^{\ast})^{\top}\in \R^n$
and $\bbeta^{\ast}\in\R^r$ such that 
\begin{align}
&\displaystyle\frac{\partial f(\w^{\ast})}{\partial w_i}+\sum_{j\notin I(\w^{\ast})}
{\left(\frac{\partial^2G(\w,\bar{\blambda})}{\partial w_i\partial w_j}+p(p-1)\lambda_1|w_i|^{p-2}\right)}
\hzeta_j^{\ast}=0\hspace{0.5em}(i\notin I(\w^{\ast})),\label{eqn:1218-1}\\
&\displaystyle\frac{\partial f(\w^{\ast})}{\partial w_i}+\sum_{j\notin I(\w^{\ast})}\frac{\partial^2G(\w^{\ast},\bar{\blambda}^{\ast})}{\partial w_i\partial w_j}\hzeta^{\ast}_j+\hzeta^{\ast}_i=0\hspace{0.5em}(i\in I(\w^{\ast})),\label{eqn:1218-2} \\
&\nabla_{\blambda}f(\w^{\ast})-\bbeta^{\ast}+\sum_{i\notin I(\w^{\ast})}\frac{\partial}{\partial{\blambda}}\left(\frac{\partial{G({\w^{\ast}},{\bar{\blambda}^{\ast}})}}{\partial w_i}+p\,{\rm sgn}(w_i)\lambda_1|w_i|^{p-1}\right){\hat{\bzeta}^{\ast}_i}=\0,\label{al:0628-1} \\
&\displaystyle\frac{\partial{G(\w^{\ast},\bar{\blambda}^{\ast})}}{\partial w_i}+p\,{\rm sgn}(w^{\ast}_i)\lambda^{\ast}_1|w^{\ast}_i|^{p-1}=0\hspace{0.5em}(i\notin I(\w^{\ast})),\label{eqn:1218-5}\\
&w^{\ast}_i=0\ \ \ (i\in I(\w^{\ast})),\label{eqn:1218-6}\\
& \0\le \blambda^{\ast}, ~\0 \le \bbeta^{\ast}, ~(\blambda^{\ast})^\top \bbeta^{\ast}=0,\label{eqn:1218-7}
\end{align}
where
{$\hzeta_i^{\ast}\ (i\in I(\w^{\ast}))$, $\hzeta_i^{\ast}\ (i\notin I(\w^{\ast}))$, 
and $\bbeta^{\ast}$
are Lagrange multipliers corresponding to the constraints 
$w_i=0\ (i\in I(\w^{\ast})$ and 
$\frac{\partial{G(\w,\bar{\blambda})}}{\partial w_i}+p\,{\rm sgn}(w_i)\lambda_1|w_i|^{p-1}=0\ (i\notin I(\w^{\ast}))$, 
and $\blambda\ge \0$, respectively.}
To derive the first equality above, we made use of the fact 
\begin{equation*}
\frac{\partial{|w_i|^{p-1}}}{\partial w_i}= (p-1) {\rm sgn}(w_i)|w_i|^{p-2} 
\end{equation*}
at $w_i\neq 0$.
{Noting the relations}
$\nabla_{\blambda}f(\w)=\0$, $\partial G(\w,\bar{\blambda})/\partial \lambda_1=0$, 
$\partial^2 G(\w,\bar{\blambda})/\partial \lambda_i\partial w_i=\partial R_i(\w,\blambda)/\partial w_i\ (i=2,3,\ldots,r)$,
{we can rewrite condition\,\eqref{al:0628-1} as}
\begin{align}
&\displaystyle \sum_{i\notin I(\w^{\ast})}p\,{\rm sgn}(w^{\ast}_i)|w^{\ast}_i|^{p-1}\hzeta^{\ast}_i=\eta^{\ast}_1,\label{eqn:1218-3}\\
&\displaystyle\sum_{j\notin I(\w^{\ast})}\frac{\partial{R_i(\w)}}{\partial w_j}\hzeta^{\ast}_j=\eta^{\ast}_i\ \ (i=2,\ldots,r).\label{eqn:1218-4}
\end{align}

Next, define $\bzeta^{\ast}\in \R^n$ as the vector with $\zeta^{\ast}_i=0\ (i\in I(\w^{\ast}))$ and $\zeta^{\ast}_i=\hzeta^{\ast}_i\ (i\notin I(\w^{\ast}))$.
Let us show that $(\w^{\ast},\blambda^{\ast},\bzeta^{\ast},\bbeta^{\ast})$ satisfies the targeted conditions\,\eqref{eqa:1214-1}--\eqref{eqa:1214-5}.
For each $i\in \{1,2,\ldots,n\}$, we have 
\begin{align}
&(w_i^{\ast})^2\frac{\partial f(\w^{\ast})}{\partial w_i}+
(w_i^{\ast})^2\sum_{j=1}^n\frac{\partial^2 G(\w^{\ast},\bar{\blambda}^{\ast})}{\partial w_i\partial w_j}\zeta_{j}^{\ast}+\lambda_1^{\ast}p(p-1)|w_i^{\ast}|^p\zeta_i^{\ast}\notag\\
&=(w_i^{\ast})^2\frac{\partial f(\w^{\ast})}{\partial w_i}+
(w_i^{\ast})^2\sum_{j\notin I(\w^{\ast})}\frac{\partial^2 G(\w^{\ast},\bar{\blambda}^{\ast})}{\partial w_i\partial w_j}\zeta_{j}^{\ast}+\lambda_1^{\ast}p(p-1)|w_i^{\ast}|^p\zeta_i^{\ast}\notag\\
&=0,\notag
\end{align}
where the first equality follows from $\zeta_{j}^{\ast}=0\ (j\in I(\w^{\ast}))$ and the second one can be proved by cases; when $i\in I(\w^{\ast})$, the desired equality is obviously true because of \eqref{eqn:1218-6}; when $i\notin I(\w^{\ast})$, it is obtained from multiplying \eqref{eqn:1218-1} by $(w_i^{\ast})^2$ and using $\zeta_{i}^{\ast}=\hzeta_{i}^{\ast}$.
Therefore, we confirm \eqref{eqa:1214-1}.
Similarly, we can deduce \eqref{eqa:1214-2} and \eqref{eqa:1224-4} from \eqref{eqn:1218-5} and \eqref{eqn:1218-4} along with the definition of $\bzeta^{\ast}$, respectively.
The remaining conditions\,\eqref{eqa:1214-3}, \eqref{eqa:1214-6}, and \eqref{eqa:1214-5} are derived from 
\eqref{eqn:1218-3}, $\zeta_{i}^{\ast}=0\ (i\in I(\w^{\ast}))$, and \eqref{eqn:1218-7}, respectively.
Putting all the above results together, we confirm that $(\w^{\ast},\blambda^{\ast},\bzeta^{\ast},\bbeta^{\ast})$ satisfies \eqref{eqa:1214-1}--\eqref{eqa:1214-5}.
Consequently, we have the desired result.
\hfill$\blacksquare$

\subsection{Proof of Lemma\,\ref{lem:0708-2352}}\label{appendix:lemma}
We will give a proof of Lemma\,\ref{lem:0708-2352}. Firstly, we review the definition and several properties for a subgradient of a given function from \cite{rockafellar2009variational}. We finally give a proof of Lemma\,\ref{lem:0708-2352}.

Let us define regular and general subgradients for a given function according to \cite[8.3(a),(b)~Definition]{rockafellar2009variational}.
For simplicity, we confine ourselves to a continuous function $f:\R^n\to \R$.
\begin{definition}
For vectors $\bm v\in \R^n$ and $\bar{\bm x}\in \R^n$, 
\begin{enumerate}
\item we say that $\bm v$ is a regular subgradient of $f$ at $\bar{\bm x}$, written $\bm v\in \hat{\partial}_{\bm x}f(\bar{\bm x})$, if 
$f(\bm x)\ge f(\bar{\bm x})+\bm v^{\top}(\bm x-\bar{\bm x})+o(\|\bm x-\bar{\bm x}\|)$.
\item We say that $\bm v$ is a (general) subgradient of $f$ at $\bar{\bm x}$, written $\bm v\in \partial_{\bm x} f(\bar{\bm x})$, if there are  sequences $\{\bm x^{\nu}\}\subseteq \R^n$ 
converging to $\bar{\bm x}$ and $\{\bm v^{\nu}\}\subseteq \R^n$ 
converging to $\bm v$
such that $\bm v^{\nu}\in \hat{\partial}f(\bm x^{\nu})$ for each $\nu$. 
\end{enumerate}
We often simply write $\hat{\partial}_{\bm x}$ and $\partial_{\bm x}$ as $\hat{\partial}$ and $\partial$, respectively. 
\end{definition}
Obviously, it holds that $\hat{\partial} f(\bm x)\subseteq \partial f(\bm x)$. 

The following propositions are useful: 
\begin{proposition}
\cite[8.8(c)~Exercise]{rockafellar2009variational}
\label{prop:0707-1}
Let $f_i:\R^n\to \R\ (i=0,1)$ be continuous. 
Let $f:=f_0+f_1$. 
If $f_0$ is continuously differentiable around $\bar{\bm x}$, then 
$\hat{\partial} f(\bar{\bm x})=\nabla f_0(\bar{\bm x}) + \hat{\partial} f_1(\bar{\bm x})$ and 
$\partial f(\bar{\bm x})=\nabla f_0(\bar{\bm x})+\partial f_1(\bar{\bm x})$.  
\end{proposition}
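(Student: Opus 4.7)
The plan is to establish the identity in two stages, first for regular subgradients and then leveraging that to handle general subgradients through the limiting definition. The smoothness of $f_0$ is what lets the proof go through cleanly, since it provides an exact first-order expansion with an $o(\|\bm x-\bar{\bm x}\|)$ remainder that can be absorbed on either side of the defining inequality.

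For the regular part, I would begin with $\bm v \in \hat{\partial}f(\bar{\bm x})$ and unpack the definition to read $f(\bm x)\ge f(\bar{\bm x})+\bm v^{\top}(\bm x-\bar{\bm x})+o(\|\bm x-\bar{\bm x}\|)$. Smoothness of $f_0$ around $\bar{\bm x}$ yields $f_0(\bm x)=f_0(\bar{\bm x})+\nabla f_0(\bar{\bm x})^{\top}(\bm x-\bar{\bm x})+o(\|\bm x-\bar{\bm x}\|)$, which I would subtract from the previous inequality to conclude
\[
f_1(\bm x)\ge f_1(\bar{\bm x})+\bigl(\bm v-\nabla f_0(\bar{\bm x})\bigr)^{\top}(\bm x-\bar{\bm x})+o(\|\bm x-\bar{\bm x}\|),
\]
i.e.\ $\bm v-\nabla f_0(\bar{\bm x})\in\hat{\partial}f_1(\bar{\bm x})$. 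The reverse inclusion is symmetric: start from $\bm u\in\hat{\partial}f_1(\bar{\bm x})$ and add the exact expansion of $f_0$ to recover $\bm u+\nabla f_0(\bar{\bm x})\in\hat{\partial}f(\bar{\bm x})$. This gives $\hat{\partial}f(\bar{\bm x})=\nabla f_0(\bar{\bm x})+\hat{\partial}f_1(\bar{\bm x})$.

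For the general subgradient identity, I would invoke the definition via sequences: $\bm v\in\partial f(\bar{\bm x})$ iff there exist $\bm x^{\nu}\to\bar{\bm x}$ and $\bm v^{\nu}\to\bm v$ with $\bm v^{\nu}\in\hat{\partial}f(\bm x^{\nu})$. Because $f_0$ is smooth in a neighborhood of $\bar{\bm x}$, for all sufficiently large $\nu$ the regular sum rule applies at $\bm x^{\nu}$, giving $\bm v^{\nu}-\nabla f_0(\bm x^{\nu})\in\hat{\partial}f_1(\bm x^{\nu})$. Continuity of $\nabla f_0$ ensures $\nabla f_0(\bm x^{\nu})\to\nabla f_0(\bar{\bm x})$, so the sequence $\bm v^{\nu}-\nabla f_0(\bm x^{\nu})$ converges to $\bm v-\nabla f_0(\bar{\bm x})$, which by definition lands in $\partial f_1(\bar{\bm x})$. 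The reverse inclusion runs identically starting from $\bm u\in\partial f_1(\bar{\bm x})$ and adding $\nabla f_0(\bm x^{\nu})$ back.

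The only delicate point is ensuring that smoothness of $f_0$ at $\bar{\bm x}$ persists in a neighborhood so that the regular sum rule is applicable at every $\bm x^{\nu}$ near $\bar{\bm x}$; this is guaranteed by the hypothesis ``smooth around $\bar{\bm x}$,'' so there is no substantive obstacle. The rest is bookkeeping with $o(\cdot)$ terms and limit passage, both entirely routine.
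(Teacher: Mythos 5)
Your proof is correct. The paper itself gives no proof of Proposition~\ref{prop:0707-1} --- it is quoted directly from \cite[8.8(c)~Exercise]{rockafellar2009variational} --- and your two-stage argument (absorbing the exact first-order expansion of $f_0$ into the defining inequality of the regular subgradient, then passing to limits along the sequences in the definition of $\partial$, using continuity of $\nabla f_0$ on a neighborhood of $\bar{\bm x}$ so the regular sum rule applies at each $\bm x^{\nu}$) is exactly the standard proof of that exercise, including the one genuinely delicate point, which you identify and handle correctly.
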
 
\begin{proposition}\cite[8.5 Proposition]{rockafellar2009variational}\label{prop:0708-2}
Let $f:\R^n\to \R$ be continuous.
Then, $\bm v\in \hat{\partial}f(\bm x)$ if and only if, 
on some neighborhood of $\bar{\bm x}$, 
there exists a differentiable function $g:\R^n\to \R$ such that 
$\nabla g(\bar{\bm x})=\bm v$, $g(\bm x)\le f(\bm x)$, and $g(\bar{\bm x})=f(\bar{\bm x})$. 
Moreover, $g$ can be taken to be continuously differentiable with $g(\bm x)<f(\bm x)$
for all $\bm x \neq \bar{\bm x}$ near $\bar{\bm x}$.
\end{proposition}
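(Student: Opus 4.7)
The plan is to establish the equivalence by treating the two directions separately; the interesting content is in the ``only if'' direction, which calls for an explicit construction of $g$ that absorbs the possible negativity of the first-order remainder.

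First I would dispose of the ``if'' direction. Assume a differentiable $g$ as in the statement exists near $\bar{\bm x}$. Differentiability at $\bar{\bm x}$ gives
\[
g(\bm x) = g(\bar{\bm x}) + \bm v^{\top}(\bm x - \bar{\bm x}) + o(\|\bm x - \bar{\bm x}\|).
\]
Combining with $g(\bar{\bm x})=f(\bar{\bm x})$ and $g(\bm x)\le f(\bm x)$ on the neighborhood yields $f(\bm x)\ge f(\bar{\bm x}) + \bm v^{\top}(\bm x - \bar{\bm x}) + o(\|\bm x - \bar{\bm x}\|)$, which is exactly $\bm v\in \hat{\partial}f(\bar{\bm x})$.

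For the ``only if'' direction, let $\bm v\in\hat{\partial}f(\bar{\bm x})$ and set $g_0(\bm x):= f(\bar{\bm x}) + \bm v^{\top}(\bm x - \bar{\bm x})$ and $r(\bm x):= f(\bm x) - g_0(\bm x)$. By hypothesis $r(\bar{\bm x})=0$ and $\liminf_{\bm x\to\bar{\bm x}} r(\bm x)/\|\bm x-\bar{\bm x}\|\ge 0$. Define the one-sided modulus
\[
\omega(t) := \max\Bigl\{0,\; -\inf\bigl\{r(\bm x)/\|\bm x-\bar{\bm x}\| : 0<\|\bm x-\bar{\bm x}\|\le t\bigr\}\Bigr\}
\]
for $t>0$, $\omega(0)=0$. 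Then $\omega$ is nondecreasing with $\omega(t)\to 0$ as $t\to 0^+$ and $r(\bm x)\ge -\omega(\|\bm x-\bar{\bm x}\|)\,\|\bm x-\bar{\bm x}\|$ near $\bar{\bm x}$. Choose a smooth nondecreasing majorant $\tilde\omega\ge\omega$ with $\tilde\omega(0)=0$ (mollify $\omega$ on the right and add $\sqrt{t}$ as a cushion), and put $\delta(t):=2\int_0^t \tilde\omega(s)\,ds$. Then $\delta$ is $C^1$ (indeed as smooth as $\tilde\omega$), $\delta(0)=\delta'(0)=0$, and $\delta(t)\ge t\,\tilde\omega(t/2)\ge t\,\omega(t)$ once the cushion is tuned for a doubling estimate. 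Now define
\[
g(\bm x) := g_0(\bm x) - \delta(\|\bm x-\bar{\bm x}\|).
\]
This $g$ is differentiable on $\R^n$ (away from $\bar{\bm x}$ by composition with the smooth norm, and at $\bar{\bm x}$ because $\delta(t)=o(t)$ forces $\nabla_{\bm x}\delta(\|\bm x-\bar{\bm x}\|)$ to extend continuously to $\bm 0$), with $g(\bar{\bm x})=f(\bar{\bm x})$ and $\nabla g(\bar{\bm x})=\bm v$; and $g(\bm x)\le f(\bm x)$ near $\bar{\bm x}$ by the chain $f - g = r + \delta(\|\cdot-\bar{\bm x}\|)\ge 0$.

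For the ``moreover'' refinement I would do two things. To obtain smoothness, take $\tilde\omega\in C^\infty((0,\infty))\cap C([0,\infty))$ in the construction above so that $\delta\in C^\infty((0,\infty))$ with all right derivatives vanishing at $0$; then $g_0-\delta(\|\cdot-\bar{\bm x}\|)$ inherits the same smoothness (off $\bar{\bm x}$ automatically, and at $\bar{\bm x}$ because all vanishing derivatives of $\delta$ at $0^+$ extend the radial function smoothly). To upgrade weak to strict inequality, replace $g$ by
\[
\tilde g(\bm x) := g(\bm x) - \|\bm x-\bar{\bm x}\|^{4},
\]
whose extra term vanishes to second order at $\bar{\bm x}$, so $\tilde g(\bar{\bm x})=f(\bar{\bm x})$ and $\nabla\tilde g(\bar{\bm x})=\bm v$ persist, while $\tilde g(\bm x)<g(\bm x)\le f(\bm x)$ for every $\bm x\ne\bar{\bm x}$ sufficiently close to $\bar{\bm x}$.

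The main obstacle is manufacturing the smooth modulus $\tilde\omega$ that simultaneously (i) majorises the raw modulus $\omega$, which is merely nondecreasing with $\omega(0^+)=0$, (ii) admits the desired doubling-type inequality $\tilde\omega(t/2)\ge \omega(t)$ needed to pass from the integrated $\delta$ back to pointwise domination of $t\,\omega(t)$, and (iii) still has $\tilde\omega(0)=0$ with enough smoothness for $\delta(\|\cdot-\bar{\bm x}\|)$ to extend as a differentiable (or smoother) radial function at $\bar{\bm x}$. Once $\tilde\omega$ is in hand, the remaining verifications are direct Taylor-series checks at $\bar{\bm x}$, and the quartic perturbation delivers the strict inequality for free.
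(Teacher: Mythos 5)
The paper offers no proof of this proposition --- it is quoted verbatim from Rockafellar--Wets (their 8.5 Proposition) and used as a black box in the Appendix --- so there is no internal argument to compare against; your reconstruction is, in substance, the classical textbook one: strip off the affine minorant $g_0$, convert the $\liminf$ in the definition of the regular subgradient into a one-sided modulus $\omega$, regularize it to a continuous nondecreasing majorant vanishing at $0$, integrate once so that $\delta(0)=\delta'(0)=0$ while $\delta(t)\ge t\,\omega(t)$, and subtract the radial function $\delta(\|\bm x-\bar{\bm x}\|)$. The ``if'' direction is correct, the doubling issue you flag is real but harmless (apply the same majorization to the rescaled modulus $\omega(2\,\cdot)$ instead of $\omega$), and your $g$ is in fact continuously differentiable everywhere, since $\nabla\bigl(\delta(\|\bm x-\bar{\bm x}\|)\bigr)=\delta'(\|\bm x-\bar{\bm x}\|)\frac{\bm x-\bar{\bm x}}{\|\bm x-\bar{\bm x}\|}$ has norm $2\tilde\omega(\|\bm x-\bar{\bm x}\|)\to 0$. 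The strict-inequality upgrade by subtracting $\|\bm x-\bar{\bm x}\|^4$ (a polynomial, so no regularity is lost) is also fine.

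The one genuine flaw is the $C^\infty$ refinement, and it cannot be repaired. In Rockafellar--Wets ``smooth'' means continuously differentiable, i.e.\ $C^1$ --- and under that reading your main construction already proves the ``moreover'' clause, so the refinement is unnecessary. Read as $C^\infty$ (or even $C^2$) the claim is actually false: the flatness you demand of $\delta$ (``all right derivatives vanishing at $0$'') forces $\delta(t)=o(t^m)$ for every $m$ by Taylor's theorem, which is incompatible with the required lower bound $\delta(t)\ge t\,\omega(t)$ whenever $\omega$ decays slowly, e.g.\ $\omega(t)\sim 1/\log(1/t)$; and without flatness the radial composition $\delta(\|\bm x - \bar{\bm x}\|)$ is not $C^\infty$ at $\bar{\bm x}$. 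Concretely, take $f(\bm x)=-\|\bm x\|/\log(1/\|\bm x\|)$ near $\bm 0$ (with $f(\bm 0)=0$), so that $f(\bm x)/\|\bm x\|\to 0$ and hence $\bm 0\in\hat{\partial}f(\bm 0)$: any $C^2$ function $g$ with $g(\bm 0)=0$, $\nabla g(\bm 0)=\bm 0$ satisfies $g(\bm x)\ge -C\|\bm x\|^2>f(\bm x)$ for all small $\bm x\neq\bm 0$, so no $C^2$ minorant with the prescribed first-order data exists at all. Drop the $C^\infty$ step; your $\delta$-construction together with the quartic perturbation is a complete proof of the statement as intended.
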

We next prove the following proposition associated with $\|\bm x\|_p^p\ (0<p\le 1)$.
\begin{proposition}\label{prop:0709-109}
For $\bm x\in \R^n$, let $I(\bm x):=\{i\mid x_i=0\}$ and $g(\bm x):=\lambda\|\bm x\|_p^p$ with $0<p\le 1$ and $\lambda\ge 0$.
Then, for $0<p<1$ and $\bar{\bm x}\in \R^n$, we have  
\begin{equation}
\partial g(\bar{\bm x}) = \left\{\bm v\mid 
v_i = \lambda p\,{\rm sgn}(\bar{x}_i)|\bar{x}_i|^{p-1}\ (i\notin I(\bar{\bm x})),\ v_i\in \R\ (i\in I(\bar{\bm x}))\right\}.
\label{eq:0708-2239}
\end{equation}
On the other hand, for $p=1$, we have 
\begin{equation}
\partial g(\bar{\bm x}) = \left\{\bm v\mid 
v_i = \,\lambda\,{\rm sgn}(\bar{x}_i)\ (i\notin I(\bar{\bm x})), 
v_i\in [-\lambda,\lambda]\ (i\in I(\bar{\bm x}))\right\}.
\label{eq:0708-2240}
\end{equation}
\end{proposition}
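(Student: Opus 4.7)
The plan is to split $g$ into a smooth piece and a purely non-Lipschitz piece according to the active set $I(\bar{\bm x})$, apply Proposition~\ref{prop:0707-1} to peel off the smooth part, and then characterize the remainder via Proposition~\ref{prop:0708-2}. Concretely, write $g=g_0+g_1$ with
\[
g_0(\bm x):=\lambda\!\!\sum_{i\notin I(\bar{\bm x})}\!|x_i|^p,\qquad g_1(\bm x):=\lambda\!\sum_{i\in I(\bar{\bm x})}\!|x_i|^p.
\]
Since $\bar x_i\ne 0$ for $i\notin I(\bar{\bm x})$, the function $g_0$ is $C^\infty$ near $\bar{\bm x}$, with $(\nabla g_0(\bar{\bm x}))_i=\lambda p\,{\rm sgn}(\bar x_i)|\bar x_i|^{p-1}$ for $i\notin I(\bar{\bm x})$ and $0$ otherwise. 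Proposition~\ref{prop:0707-1} then yields $\partial g(\bar{\bm x})=\nabla g_0(\bar{\bm x})+\partial g_1(\bar{\bm x})$, so the task reduces to computing $\partial g_1(\bar{\bm x})$.

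For the ``$\supseteq$'' inclusion, I would apply Proposition~\ref{prop:0708-2}: given any candidate $\bm v$ with $v_i=0$ on $i\notin I(\bar{\bm x})$ and $v_i$ in the target range on $i\in I(\bar{\bm x})$, pick $c>0$ large and take the smooth minorant $\phi(\bm x):=\bm v^{\top}(\bm x-\bar{\bm x})-c\|\bm x-\bar{\bm x}\|^2$. The identities $\phi(\bar{\bm x})=g_1(\bar{\bm x})=0$ and $\nabla\phi(\bar{\bm x})=\bm v$ are immediate, and the inequality $\phi(\bm x)\le g_1(\bm x)$ near $\bar{\bm x}$ reduces, after setting $\bm h:=\bm x-\bar{\bm x}$, to
\[
\sum_{i\in I(\bar{\bm x})}\!\bigl(\lambda|h_i|^p-v_ih_i+ch_i^2\bigr)+c\!\!\sum_{i\notin I(\bar{\bm x})}\!\!h_i^2\ge 0.
\]
Each summand in the first sum is nonnegative for small $|h_i|$: for $0<p<1$ because $\lambda|h_i|^p$ dominates the linear term thanks to $|h_i|^{p-1}\to\infty$; for $p=1$ because $|v_i|\le\lambda$ makes $\lambda|h_i|-v_ih_i\ge 0$ outright. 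Proposition~\ref{prop:0708-2} then gives $\bm v\in\hat\partial g_1(\bar{\bm x})\subseteq\partial g_1(\bar{\bm x})$.

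For the ``$\subseteq$'' inclusion, I would work directly from the definition of $\hat\partial$. Since $g_1$ is independent of $x_j$ for $j\notin I(\bar{\bm x})$, testing any regular subgradient $\bm v^\nu\in\hat\partial g_1(\bm x^\nu)$ along $\bm x=\bm x^\nu\pm t\bm e_j$ forces its $j$-th component to vanish, and passing to the limit propagates this to $\partial g_1(\bar{\bm x})$. For $i\in I(\bar{\bm x})$, testing the liminf definition of $\hat\partial g_1(\bar{\bm x})$ along $\bm x=\bar{\bm x}+t\bm e_i$ produces the quantity $\lambda|t|^{p-1}-v_i\,{\rm sgn}(t)$: when $p=1$ this yields $|v_i|\le\lambda$, while for $0<p<1$ the blow-up $|t|^{p-1}\to\infty$ imposes no restriction whatsoever on $v_i$. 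Convexity of $g_1$ at $p=1$ then gives $\partial g_1(\bar{\bm x})=\hat\partial g_1(\bar{\bm x})$; for $0<p<1$, the outer inclusion of the target set is trivial since the target set on $i\in I(\bar{\bm x})$ is all of $\R$.

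Combining the two inclusions and folding back $\nabla g_0(\bar{\bm x})$ reproduces \eqref{eq:0708-2239} and \eqref{eq:0708-2240}. The main subtlety is the $0<p<1$ case at a zero coordinate: the infinite slope of $|t|^p$ at the origin is precisely what forces \emph{every} real number to be a regular subgradient there, in sharp contrast with the bounded interval $[-\lambda,\lambda]$ that appears in the convex case $p=1$; the rest of the argument is routine bookkeeping built on the two propositions recalled above.
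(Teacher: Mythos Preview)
Your proof is correct and follows essentially the same route as the paper: split off the smooth part via Proposition~\ref{prop:0707-1}, then characterize $\partial g_1(\bar{\bm x})$ (the paper's $\partial\hat g(\bar{\bm x})$) using Proposition~\ref{prop:0708-2} for the inclusion $\supseteq$ and a coordinate-direction test plus a limiting argument for $\subseteq$. The only cosmetic differences are that you use an explicit linear-minus-quadratic minorant where the paper uses a purely linear one, and for $p=1$ you invoke convexity to identify $\partial g_1$ with $\hat\partial g_1$ whereas the paper simply cites the separable structure of $|\cdot|$.
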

\begin{proof}
For convenience of expression, let $\hat{g}(\bm x):=\lambda\sum_{i\in I(\bar{\bm x})}|x_i|^p$.
Note that 
$
g(\bm x) = \hat{g}(\bm x) + \lambda\sum_{i\notin I(\bar{\bm x})}|x_i|^p
$
and $\lambda\sum_{i\notin I(\bar{\bm x})}|x_i|^p$ is continuously differentiable around $\bar{\bm x}$. Then, by Proposition\,\ref{prop:0707-1}, we have
\begin{equation}
\partial g(\bar{\bm x}) = \lambda\sum_{i\notin I(\bar{\bm x})}p\,{\rm sgn}(\bar{x}_i)|\bar{x}_i|^{p-1}{\bm e}^i
+\partial\hat{g}(\bar{\bm x}),
\label{eq:0708-2242}
\end{equation}
where ${\bm e}^i\in \R^n$ is the vector such that the $i$-th element 
is one and the others are zeros.
Supposing $I(\bar{\bm x})\neq \emptyset$, we next  describe 
$\partial_{\bm x}\hat{g}(\bar{\bm x})$ precisely. 
First, consider the case of $0<p<1$.
For any $\bm v\in \R^{n}$ with 
$v_i = 0\ (i\notin I(\bar{\bm x}))$, we can show that 
$\lambda\sum_{i\in I(\bar{\bm x})}|x_i|^p\ge 
\lambda\sum_{i\in I(\bar{\bm x})}v_ix_i$
holds on a sufficiently small neighborhood of $\bar{\bm x}$ since $\lambda\ge 0$.
Then, Proposition\,\ref{prop:0708-2} implies 
\begin{equation}
\hat{\partial}\hat{g}(\bar{\bm x})
\supseteq 
\left\{
{\bm v}\mid 
v_i = 0\ (i\notin I(\bar{\bm x}))
\right\}.
\label{eq:0707-1308}
\end{equation}
We next show the converse implication for the above.
To this end, choose a regular subgradient $\bm v\in 
\hat{\partial}\hat{g}(\bar{\bm x})
=
\left.\hat{\partial}\left(\lambda\sum_{i\in I(\bar{\bm x})}|x_i|^p\right)\right|_{\bm x = \bar{\bm x}}$ arbitrarily. Then,
according to Proposition\,\ref{prop:0708-2},
there exists some 
differentiable function $h$ such that $h(\bm x)\le \lambda\sum_{i\in I(\bar{\bm x})}|x_i|^p$ near $\bar{\bm x}$, $h(\bar{\bm x})=\lambda\sum_{i\in I(\bar{\bm x})}|\bar{x}_i|^p=0$, and $\nabla h(\bar{\bm x})=\bm v$.
Then, for arbitrarily chosen $j\notin I(\bar{\bm x})$, 
$h(\bar{\bm x}+s\bm e^j)\le \lambda\sum_{i\in I(\bar{\bm x})}|\bar{x}_i|^p= 0$ for any $s\in \R$ sufficiently small.
From this fact along with $h(\bar{\bm x})=0$, we see that 
$s=0$ is a local maximizer of $\max_{s\in \R}h(\bar{\bm x}+s\bm e^j)$, and thus 
$v_j=\partial h(\bar{\bm x})/\partial x_j = 
\partial h(\bar{\bm x}+s\bm e^j)/\partial s|_{s=0}=0$. Hence, 
since the index $j\in I(\bar{\bm x})$ was arbitrarily chosen, 
we obtain the converse implication for \eqref{eq:0707-1308}.
Using this fact and \eqref{eq:0707-1308},
we have 
\begin{equation}
\hat{\partial}\hat{g}(\bar{\bm x})
=
\left\{
{\bm v}\mid 
v_i = 0\ (i\notin I(\bar{\bm x}))
\right\}.
\label{eq:0707-2058}
\end{equation}
We next prove that 
\begin{equation}
\partial \hat{g}(\bar{\bm x})\subseteq \left\{
{\bm v}\mid 
v_i = 0\ (i\notin I(\bar{\bm x}))
\right\}.\label{eq:0708-2235}
\end{equation}
Choose $\bm v\in \partial{\hat{g}}(\bar{\bm x})$ arbitrarily.
Then, there exist sequences $\{\bm x^{\nu}\}$ and $\{\bm v^{\nu}\}$
such that $\lim_{\nu\to \infty}\bm x^{\nu} = \bar{\bm x}$, $\lim_{\nu\to \infty}\bm v^{\nu} = \bm v$, and $\bm v^{\nu}\in \hat{\partial} \hat{g}(\bm x^{\nu})$ for any $\nu$. 
For an arbitrary $j\notin I(\bar{\bm x})$, it is not difficult to verify
$\bm v^{\nu}_j=0$ for all $\nu$ sufficiently large. Therefore, we obtain $v_j =0$ for any $j\notin I(\bar{\bm x})$. Thus, we conclude \eqref{eq:0708-2235} which together with the facts of 
$
\hat{\partial}\hat{g}(\bar{\bm x})\subseteq \partial \hat
{g}(\bar{\bm x})
$ 
and \eqref{eq:0707-2058}
implies 
\begin{equation}
\partial \hat{g}(\bar{\bm x})=\left\{
{\bm v}\mid 
v_i = 0\ (i\notin I(\bar{\bm x}))
\right\}.\notag 
\end{equation}
Finally, from this equality and \eqref{eq:0708-2242}, we obtain the desired result\,\eqref{eq:0708-2239}.

For the case where $p=1$, 
it is easy to show the desired result\,\eqref{eq:0708-2240}
using the fact of 
$
\partial \hat{g}(\bar{\bm x})=\lambda\sum_{i\in I(\bar{\bm x})}
\left.\partial_{\bm x}\left|x_i\right|\hspace{0.1em}\right|_{\bm x=\bar{\bm x}}.
$
We omit the detailed proof.
\end{proof}
We are now ready to show Lemma\,\ref{lem:0708-2352}.\vspace{0.5em}\\
{\bf Proof of Lemma\,\ref{lem:0708-2352}:}
We first note that, since $G$ is continuously differentiable and $R_1(\w)=\|\w\|_p^p$ and $\lambda_1\ge 0$, we have 
\begin{align}
&\partial_{\bw}\left(G(\w,\bar{\blambda})+\lambda_1R_1(\w)\right)\notag \\
=&\nabla_{\bw}G(\w,\bar{\blambda}) + \partial_{\w}(\lambda_1R_1(\w))\notag \\
=&
\begin{cases}
\left\{\bm v\mid 
v_i=\frac{\partial G(\w,\bar{\blambda})}{\partial w_i} + 
\lambda_1 p\,{\rm sgn}(w_i)|w_i|^{p-1}\ (i\notin I(\w)),
\ v_i\in \R\ (i\in I(\w))
\right\} &(p<1)\vspace{0.5em}\\
\left\{\bm v\mid 
v_i=\frac{\partial G(\w,\bar{\blambda})}{\partial w_i}+\lambda_1 {\rm sgn}(w_i)\ (i\notin I(\w)),
v_i\in \frac{\partial G(\w,\bar{\blambda})}{\partial w_i}+[-\lambda_1,\lambda_1]\ (i\in I(\w))
\right\} &(p=1),
\end{cases}
\label{al:0718}
\end{align}
where the first equality follows from Proposition\,\ref{prop:0707-1}
and the second equality comes from Proposition\,\ref{prop:0709-109}.

{Now, let us show the first claim.
Suppose $\bm 0\in \partial_{\w}(G(\w,\bar{\blambda})+\lambda_1R_1(\w))$.
Then, by \eqref{al:0718}, we have 
\begin{align}
&w_i = 0\ (i\in I(\w)), \label{al:0709-031} \\ 
&\displaystyle \frac{\partial{G(\w,\bar{\blambda})}}{\partial w_i}+p\,{\rm sgn}(w_i)\lambda_1|w_i|^{p-1}=0\ (i\notin I(\w)),\label{al:0709-032}
\end{align}
which readily imply $\bW \nabla_{\w}G(\w,\bar{\blambda})+p\lambda_1|\bw|^p= \bm 0$.
Hence, we obtain the first claim. 

We next show the latter claim for the case of $p<1$. 
Suppose that 
$\bW \nabla_{\w}G(\w,\bar{\blambda})+p\lambda_1|\bw|^p= \bm 0$. 
Then, we see that \eqref{al:0709-031} and \eqref{al:0709-032} hold. In view of this fact together with 
\eqref{al:0718} for $p<1$, we obtain $\bm 0\in \partial_{\w}(G(\w,\bar{\blambda})+\lambda_1R_1(\w))$. Thus, we conclude the latter claim.}
\\
\hfill$\blacksquare$

{\subsection{Proof of Proposition~\ref{lem:1217-1}}
\label{appendix:Lemma2}
Denote $\w^k=(w^k_1,w^k_2,\ldots,w^k_n)^{\top}$ for each $k$.
We first show \eqref{al:1214-1921}. 
Note that it follows from \eqref{al:1214-1} that 
\begin{equation}
w_i^k(\nabla\bphi_{\mu_{k-1}}(\w^k))_i=p(w_i^k)^2((w^k_i)^2+\mu_{k-1}^2)^{\frac{p}{2}-1}
\label{eq:1214-2030}
\end{equation}
for each $i\in \{1,2,\ldots,n\}$.
Then, for the index $i\notin I(\bw^{\ast})$, 
we have $w^{\ast}_i\neq 0$ and thus get 
\begin{align}
\lim_{k\to \infty}w_i^k(\nabla\bphi_{\mu_{k-1}}(\w^k))_i&=p(w_i^{\ast})^2|w_i^{\ast}|^{p-2} \notag\\
                    &=p|w_i^{\ast}|^{p}. \label{al:1214-2047}
\end{align}
We next choose $i\in I(\bw^{\ast})$ arbitrarily and
divide the index set $K:=\{1,2,\ldots,\}$ into the following two sets:
$$
U_1^i:=\{k\in K\mid w^k_i\neq 0\},\
U_2^i:=\{k\in K\mid w^k_i=0\}.
$$
Then, 
for $k\in U_1^i$, equation\,\eqref{eq:1214-2030} together with $p/2 - 1< 0$ and
$w^k_i\neq 0$ yields that 
\begin{align}
w_i^k(\nabla\bphi_{\mu_{k-1}}(\w^k))_i&\le p|w_i^k|^2|w_i^k|^{2(\frac{p}{2}-1)}\notag\\
                                                   &=p|w_i^k|^p. \label{al:1214-2036} 
\end{align} 
Since $w_i^k(\nabla\bphi_{\mu_{k-1}}(\w^k))_i\ge 0$ holds for each $k\in U_1^i$ in view of the right-hand of \eqref{eq:1214-2030} and $\lim_{k\to \infty}\mu_{k-1}=0$,
letting $k\in U_1^i\to\infty$ in \eqref{al:1214-2036} implies 
\begin{equation}
\lim_{k\in U^i_1\to\infty}w_i^k(\nabla\bphi_{\mu_{k-1}}(\w^k))_i=p|w^{\ast}_i|^{p}=0.\label{eq:1214-2240-0}
\end{equation}
Similarly, for all $k \in U_2^i$, we have $w_i^k(\nabla\bphi_{\mu_{k-1}}(\w^k))_i=0$  because of $w^k_i=0$ $(k\in U^i_2)$ and \eqref{al:1214-1}.
This fact together with \eqref{eq:1214-2240-0} yields 
\begin{equation}
\lim_{k\to\infty}w_i^k(\nabla\bphi_{\mu_{k-1}}(\w^k))_i=p|w^{\ast}_i|^{p}.\label{eq:1214-2240}
\end{equation}
Combining this with \eqref{al:1214-2047}, we conclude \eqref{al:1214-1921}.

We next show \eqref{al:1214-1922}.
In view of \eqref{al:1214-2}, we have 
\begin{align}
&(w^k_i)^2(\nabla^2\bphi_{\mu_{k-1}}(\w^k))_{ii}\notag \\
&=p(w^k_i)^2((w^k_i)^2+\mu_{k-1}^2)^{\frac{p}{2}-1}\notag\\
                     &\hspace{3em}+p(p-2)(w^k_i)^4((w^k_i)^2+\mu_{k-1}^2)^{\frac{p}{2}-2}\notag\\
&=\left(1+
\frac{(p-2)(w^k_i)^2}{(w^k_i)^2+\mu_{k-1}^2}\right)
\left(w^k_i\left(\nabla\bphi(\w^k)\right)_i\right),\label{al:1214-2208}
\end{align}
for any $i=1,2,\ldots,n$, where the last equality is due to \eqref{al:1214-1}.
For the case of $i\notin I(\bw^{\ast})$, we obtain 
\begin{equation*}
\lim_{k\to\infty}\frac{(w^k_i)^2}{(w^k_i)^2+\mu_{k-1}^2}=1, 
\end{equation*}
which together with \eqref{al:1214-2047} and \eqref{al:1214-2208} implies 
\begin{equation}
\lim_{k\to\infty}(w^k_i)^2(\nabla^2\bphi_{\mu_{k-1}}(\w^k))_{ii}=p(p-1)|w_i^{\ast}|^p.\label{eq:1214-2230}
\end{equation}
In turn, let us focus on the case of $i\in I(\bw^{\ast})$.
Then, the sequence $\{(w^k_i)^2/{\left((w^k_i)^2+\mu_{k-1}^2\right)}\}$ is bounded since $|(w^k_i)^2/{\left((w^k_i)^2+\mu_{k-1}^2\right)}|<1$ follows from $\mu_{k-1}>0$ for all $k$.
Hence, using \eqref{eq:1214-2240}, we derive from \eqref{al:1214-2208} that
\begin{equation*}
\lim_{k\to\infty}(w^k_i)^2(\nabla^2\bphi_{\mu_{k-1}}(\w^k))_{ii}=0=p(p-1)|w_i^{\ast}|^p,
\end{equation*}
where the last equality is due to $w_i^{\ast}=0$ for $i\in I(\w^{\ast})$.
By this equation together with \eqref{eq:1214-2230}, we conclude \eqref{al:1214-1922}.
The proof is complete.\\
\hfill$\blacksquare$
\subsection{Proof of Lemma~\ref{prop:0312}}\label{appendix:speed}
Choose $i\in I(\w^{\ast})$ arbitrarily. 
{We show the claim for the case where} $w^k_i \neq 0$ for all $k\in K$.
{It is not difficult to extend the argument to the general case where $w^k_i=0$ occurs for infinitely many $k$.}
Also, we may assume $\lambda_1^k>0$ for all $k\in K$ because of Assumption~A1. 
For simplicity, denote 
$$
F_i(\w^{k},\bar{\blambda}^{k}):=\frac{\partial G(\w^{k},\bar{\blambda}^{k})}{\partial w_i}
$$
for each $k\in K$.
From 
\eqref{al:1214-1} and the $i$-th element of
condition\,\eqref{eq:1215-2} with 
$(\w,\blambda,{\bm \varepsilon}_4)=(\w^k,\blambda^k,{\bm \varepsilon}_{4}^{k-1})$, we have, for each $k\in K$, 
\begin{equation}
F_i(\w^{k},\bar{\blambda}^{k}) +p\lambda_1^kw_i^k((w_i^k)^2+\mu_{k-1}^2)^{\frac{p}{2}-1}=(\bm \varepsilon^{k-1}_4)_i,\label{eq:0312-0}
\end{equation}
which together with the assumption $w^k_i\neq 0$ and $\lambda^k_1\neq 0\ (k\in K)$ implies
\begin{equation}
F_i(\w^{k},\bar{\blambda}^{k})-(\bm \varepsilon^{k-1}_4)_i\neq 0\ (k\in K).\label{eq:0402-1}
\end{equation}
Recall that ${\bm \varepsilon}^{k-1}_4\to {\bm 0}$ as $k\to \infty$.
Noting this fact and \eqref{eq:0312-0}, we get 
\begin{equation}
\mu_{k-1}^2=\frac{\left|F_i(\w^{k},\bar{\blambda}^{k})-({\bm \varepsilon}^{k-1}_4)_i\right|^{\temp}}{\tilde{p}\tilde{\lambda}_1^k|w_i^k|^{\temp}} - (w^k_i)^2, \label{eq:0312}
\end{equation}
where 
$$\tilde{p}:=p^{\temp},\ \tilde{\lambda}_1^k:=({\lambda}_1^k)^{\temp}.$$
Then, it follows that 
\begin{align}
\frac{|w_i^k|^{\mtemp}}{\mu_{k-1}^2}&=\frac{
\tilde{p}\tilde{\lambda}_1^k
}{
\left|F_i(\w^{k},\bar{\blambda}^{k}){-{\bm (\varepsilon}^{k-1}_4)_i}\right|^{\temp}
-\tilde{p}\tilde{\lambda}_1^k|w^k_i|^{2+\temp}}.\label{al:0313-1}
\end{align}

To show the desired result, it suffices to prove that  
$\left\{{|w_i^k|^{\mtemp}}/{\mu_{k-1}^2}\right\}_{k\in K}$ is bounded from above.
To this end, we first consider the case of $p=1$. 
By substituting $p=1$ for \eqref{eq:0312-0}, we get
\begin{align}
F_i(\w^{k},\bar{\blambda}^{k}){-({\bm \varepsilon}^{k-1}_4)_i}+\lambda_1^k\frac{w_i^k}{\sqrt{(w_i^k)^2+\mu_{k-1}^2}}=0.\label{al:0402-2324}
\end{align}
Moreover, by substituting $p=1$ for \eqref{al:0313-1}, 
we have 
\begin{align}
\frac{|w_i^k|^{2}}{\mu_{k-1}^2}&=\frac{(\lambda_1^k)^{-2}}{
\left|F_i(\w^{k},\bar{\blambda}^{k}){-({\bm \varepsilon}^{k-1}_4)_i}\right|^{-2}
-(\lambda_1^k)^{-2}}\notag \\
&=\displaystyle{\frac{\left|F_i(\w^{k},\bar{\blambda}^{k}){-({\bm \varepsilon}^{k-1}_4})_i\right|^2}{({\lambda}_1^{k})^2-\left|F_i(\w^{k},\bar{\blambda}^{k}){-({\bm \varepsilon}^{k-1}_4)_i}\right|^2}}.
\label{al:0313-2}
\end{align}
From equation\,\eqref{al:0402-2324}, it is not difficult to see that $|F_i(\w^{k},\bar{\blambda}^{k}){-({\bm \varepsilon}^{k-1}_4)_i}|^2\le |\lambda_1^k|^2$. In this inequality, let $k\in K\to \infty$. Then, Assumption~A3 
together with $F_i(\w^{\ast},\bar{\blambda}^{\ast})=
\partial G(\w^{\ast},\bar{\blambda}^{\ast})/\partial w_i$ yields 
\begin{equation}
(\lambda_1^{\ast})^2-|F_i(\w^{\ast},\bar{\blambda}^{\ast})|^2>0.\label{eq:0312-3}
\end{equation}
Letting $k\in K\to \infty$ in equation\,\eqref{al:0313-2} and noting \eqref{eq:0312-3}, we readily derive that
\begin{equation}
\lim_{k\in K\to\infty}\frac{|w_i^k|^{\mtemp}}{\mu_{k-1}^2}=
\displaystyle{\frac{\left|F_i(\w^{\ast},\bar{\blambda}^{\ast})\right|^2}{({\lambda}_1^{\ast})^2-\left|F_i(\w^{\ast},\bar{\blambda}^{\ast})\right|^2}}<\infty.\label{eq:0312-148}
\end{equation}
We next consider the case of $p<1$.
By using \eqref{al:0313-1} again, it holds that 
\begin{align}
\lim_{k\in K\to\infty}\frac{|w_i^k|^{\mtemp}}{\mu_{k-1}^2}
&=\frac{
\tilde{p}\tilde{\lambda}_1^{\ast}
}{
\left|F_i(\w^{\ast},\bar{\blambda}^{\ast})\right|^{\temp}
-\tilde{p}\tilde{\lambda}_1^{\ast}|w^{\ast}_i|^{2+\temp}}\notag\\
&=\frac{
\tilde{p}\tilde{\lambda}_1^{\ast}
}{\left|F_i(\w^{\ast},\bar{\blambda}^{\ast})\right|^{\temp}}\notag\\
&<\infty,\label{al:0312-214}
\end{align}
where $\tilde{\lambda}_1^{\ast}:=({\lambda}_1^{\ast})^{\temp}>0$ and the second equality follows from $2+\frac{2}{p-2}>0$ and  
$w^{\ast}_i=0$ because of $i\in I(\w^{\ast})$. 
Particularly, note that 
the last strict inequality is true due to $2/(p-2)<0$ even if $\left|F_i(\w^{\ast},\bar{\blambda}^{\ast})\right|=0$.
Finally, by \eqref{eq:0312-148} and \eqref{al:0312-214}, we conclude the desired result.\\
\hfill$\blacksquare$

\subsection{Proof of Proposition~\ref{prop:0607}}\label{appendix:boundedness}
We prepare the following lemma.
\begin{LEMM}\label{lem:0607}
Suppose that Assumption~A4 holds and let
$(\w^{\ast},\blambda^{\ast})$ be an arbitrary accumulation point of
the sequence  $\{
(\w^k,\blambda^k)\}$.
Recall that we write
$\nabla_{\tilde{\w}}h(\w):=\left(\frac{\partial h(\w)}{\partial w_{i_1}},\ldots,\frac{\partial h(\w)}{\partial w_{i_p}}\right)^{\top}\in \R^{p}$
for a function $h:\R^n\to \R$ and the index set $\{i_1,i_2,\ldots,i_p\}:=\{1,2,\ldots,n\}\setminus I(\w^{\ast})$.
Moreover, denote $\tilde{\w}:=(w_i)_{i\notin I(w^{\ast})}$ and 
{\begin{align}
\nabla_{(\tilde{\w},\blambda)}\Phi_i(\w,\blambda)&:=\begin{bmatrix}
\nabla_{\tilde{\w}}\Phi_i(\w,\blambda)\\
\nabla_{\blambda}\Phi_i(\w,\blambda)
\end{bmatrix}\in \R^{n-|I(\w^{\ast})|+r}
\ \ (i\notin I(\w^{\ast})),\label{eq:0713-1}\\
\nabla_{(\tilde{\w},\blambda)}\lambda_i&:=\begin{bmatrix}
\nabla_{\tilde{\w}}\lambda_i\\
\nabla_{\blambda}\lambda_i
\end{bmatrix}\in \R^{n-|I(\w^{\ast})|+r}
\ \ (i\in I(\blambda^{\ast})). \label{al:0714-2328}
\end{align}
Then, the vectors 
$$
\left\{
\left\{
\nabla_{(\tilde{\w},\blambda)}\Phi_i(\w^{\ast},\blambda^{\ast})\right\}_{i\notin I(\w^{\ast})},
\left\{
\nabla_{(\tilde{\w},\blambda)} \lambda_i|_{\blambda=\blambda^{\ast}}
\right\}_{i\in I(\blambda^{\ast})}
\right\}
$$}
are linearly independent.
\end{LEMM}
\begin{proof}
Notice that $\nabla_{(\tilde{\w},\blambda)}\lambda_i$
is 
the vector such that the   
$(n-|I(\w^{\ast})|+i)$-th entry is 1 and the others are 0s.
Under Assumption~A4, we see that the matrix
$$
{\bm M}:=\left[
\left(\nabla \Phi_i(\w^{\ast},\blambda^{\ast})\right)_{i\notin I(\w^{\ast})}, 
(\nabla_{(\w,\blambda)} w_i|_{\w=\w^{\ast}})_{i\in I(\w^{\ast})},
(\nabla_{(\w,\blambda)} \lambda_i|_{\blambda=\blambda^{\ast}})_{i\in I(\blambda^{\ast})}
\right]\in \R^{(n+r)\times (n+|I(\blambda^{\ast})|)}
$$
is of full-column rank.
Since  
the matrix
{\begin{align}
{\bm N}&:=
\begin{bmatrix}
{\rm zeros}(|I(\w^{\ast})|,n-|I(\w^{\ast})|)
&
{\bm E_{|I(\w^{\ast})|}}
& {\rm zeros}(|I(\w^{\ast})|,|I(\blambda^{\ast})|)
\\
\left(
\nabla_{(\tilde{\w},\blambda)}\Phi_i(\w^{\ast},\blambda^{\ast})
\right)_{i\notin I(\w^{\ast})}&
{\rm zeros}(n-|I(\w^{\ast})|+r,|I(\w^{\ast})|)&
(\nabla_{(\tilde{\w},\blambda)}\lambda_i|_{\blambda=\blambda^{\ast}})_{i\in I(\blambda^{\ast})}
\end{bmatrix}\notag \\
&\in \R^{(n+r)\times (n+|I(\blambda^{\ast})|)},\notag
\end{align}
where 
${\bm E}_s$ denotes the $s\times s$ identity matrix and
${\rm zeros}(s,t)$ stands for the zero matrix in $\R^{s\times t}$,}
is obtained by applying appropriate elementary column and row operations to $\bm M$,
we find that 
$\bm N$ is of full-column rank. Hence, 
the desired result is obtained.
\end{proof}

\noindent{{\bf Proof of Proposition~\ref{prop:0607}:}} For simplicity, let
$$
{\bxi}^k:=((\bzeta^k)^{\top},(\bbeta^k)^{\top})^{\top},\ \hat{\bzeta}^k:=\frac{\bzeta^k}{\|\bxi^k\|},\ \hat{\bbeta}^k:=\frac{{\bbeta^k}}{\|\bxi^k\|}
$$
for each $k$.
Suppose to the contrary that $\{{\bxi}^k\}$ is unbounded. 
Choosing an arbitrary accumulation point $(\w^{\ast},\blambda^{\ast})$
of the sequence $\{(\w^k,\blambda^k)\}$, 
without loss of generality,
we can assume that
$(\w^k,\blambda^k)\to (\w^{\ast},\blambda^{\ast})$ and $\|\bxi^k\|\to \infty$ as $k\to \infty$, if necessary, by taking a subsequence.
Let us denote
an arbitrary accumulation point of $\{\bxi^k/\|\bxi^k\|\}$ by 
$\hat{\bxi}^{\ast}:=((\hat{\bzeta}^{\ast})^{\top},\hat{\bbeta}^{\ast})^{\top}$,
where
$\hat{\bzeta}^{\ast}$ and $\hat{\bbeta}^{\ast}$ are accumulation points of $\{\hat{\bzeta}^k\}$ and $\{\hat{\bbeta}^{k}\}$, respectively.
Again, without loss of generality, we can suppose 
$
\lim_{k\to \infty}\hat{\bxi}^k=\hat{\bxi}^{\ast}.
$
Notice that $\|\hat{\bxi}^{\ast}\|=1$. 
By dividing both sides of \eqref{eq:1215-1}, \eqref{eqn:1217-1}, \eqref{eqn:1217-2},
and \eqref{eq:1215-3} with 
$\w = \w^k,\blambda = \blambda^k,\bzeta = \bzeta^k,\bbeta = \bbeta^k$ 
and $(\bm\varepsilon_1,\varepsilon_2,{\bm \varepsilon}_3,{\bm\varepsilon}_4,\varepsilon_5)
=(\bm\varepsilon_1^{k-1},\varepsilon_2^{k-1},{\bm \varepsilon}_3^{k-1},{\bm\varepsilon}_4^{k-1},\varepsilon_5^{k-1})
$
by $\|\bxi^k\|$, we have, for each $k$,
\begin{align}
&\frac{\left(\nabla f(\w^k)\right)_i}{\|\bxi^k\|}
+\left(\nabla_{\w\w}^2G(\w^{k},\bar{\blambda}^{k})\hat{\bzeta}^k\right)_{i}
+\lambda_1^k(\nabla^2\bphi_{\mu_k}(\w^k))_{ii}\hat{\zeta}_i^k=\frac{{({\bm \varepsilon}^{k-1}_1)_i}}{\|\bxi^k\|}
\ \ (i=1,2,\ldots,n), \label{eq:0605-1}\\
&\nabla\bphi_{\mu_k}(\w^k)^{\top}{\hat{\bzeta}^k}-{\hat{\eta}_1^k}=\frac{\varepsilon^{k-1}_2}{\|\bxi^k\|},\label{eq:0605-2}\\
&{\nabla R_i(\w^k)^{\top}\hat{\bzeta}^k-\hat{\eta}^k_i=\frac{(\bm \varepsilon_3^{k-1})_i}{\|\bxi^k\|}\ \ (i=2,3,\ldots,r)},\label{eq:0714-1708}\\
&{\lambda_i^k\hat{\eta}^k_i\le \frac{\varepsilon_5^{k-1}}{\|\bxi^k\|},\ \lambda_i^k\ge 0,\ \hat{\eta}^k_i\ge 0}\ 
(i=1,2,\ldots,r),
\label{eq:0605-3}
\end{align}
where the last conditions are deduced by componentwise decomposition of \eqref{eq:1215-3}.
Note that 
${\bm \varepsilon^{k-1}_1}/\|\bxi^k\|$,
${\varepsilon_2^{k-1}}/\|\bxi^k\|$, 
${\bm \varepsilon_3^{k-1}}/{\|\bxi^k\|}$,
and 
$
{\varepsilon_5^{k-1}}/{\|\bxi^k\|}
$
converge to 0 as $k\to \infty$.
By driving $k\to \infty$ in \eqref{eq:0605-3} 
for $i=1$ and using $\lim_{k\to \infty}\lambda^k_1=\lambda_1^{\ast}>0$ from Assumption~A1, 
we have 
\begin{equation}
\hat{\eta}^{\ast}_1=0. \label{eq:0606-2} 
\end{equation}
{In a similar manner, we can get 
\begin{equation}
{\hat{\eta}^{\ast}_i = 0\ \ (i\notin I(\blambda^{\ast}))},\label{eq:0715-057}
\end{equation}
where $I(\blambda^{\ast})=\{i\in \{1,2,\ldots,r\}\mid \lambda^{\ast}_i=0\}$ as is defined in Assumption~A4. }
Expressions \eqref{eq:0606-2} and \eqref{eq:0715-057} together with $\|\hat{\bxi}^{\ast}\|=1$, i.e., 
$\|\hat{\bxi}^{\ast}\|^2=\|\hat{\bzeta}^{\ast}\|^2+\sum_{i=1}^r|\hat{\eta}^{\ast}_i|^2=1$ imply
\begin{equation}
\|\hat{\bzeta}^{\ast}\|^2+\sum_{i\in I(\blambda^{\ast})}|\hat{\eta}^{\ast}_i|^2 = 1.\label{eq:0606-1}
\end{equation}

Next, let $k\to \infty$ in \eqref{eq:0605-1}. By the boundedness of $\{\nabla^2_{\w\w}G(\w^k,\bar{\blambda}^k){\hat{\bzeta}}^k\}$ and $\lim_{k\to \infty}\nabla f(\w^k)/\|\bxi^k\|=\bm 0$, 
we find that 
$\left\{\lambda_1^k\left(\nabla^2\bphi_{\mu_k}(\w^k)\right)_{ii}{\zeta}^k_{i}/\|\bxi^k\|\right\}$ is bounded for each $i$.
Using this fact,
$\lim_{k\to \infty}\lambda^k_1=\lambda_1^{\ast}>0$, and 
$\lim_{k\to \infty}|(\nabla^2\bphi_{\mu_k}(\w^k))_{ii}|\to \infty$
for $i\in I(\w^{\ast})$ by Proposition\,\ref{lem:0312}
yield 
\begin{equation}
\hat{\zeta}^{\ast}_i = 0\ \ (i\in I(\w^{\ast})).\label{eq:0605-4}
\end{equation}
We next show that 
\begin{equation}
\sum_{i\notin I(\w^{\ast})}{\rm sgn}(w_i^{\ast})|w_i^{\ast}|^{p-1}\hat{\zeta}^{\ast}_i=0. \label{eq:0607-1}
\end{equation}
For proving \eqref{eq:0607-1}, it suffices to show
\begin{equation}
\lim_{k\to \infty}\nabla \bphi_{\mu_{k-1}}(\w^k)^{\top}\hat{\bzeta^k}=\sum_{i\notin I(\w^{\ast})}{p}\,{\rm sgn}(w_i^{\ast})|w_i^{\ast}|^{p-1}\hat{\zeta}^{\ast}_i. \label{eq:0607-2}
\end{equation}
Indeed, we can derive \eqref{eq:0607-1} from \eqref{eq:0607-2} by taking the limit of \eqref{eq:0605-2}, \eqref{eq:0605-4}, and \eqref{eq:0606-2} into account.
Choose $i\in I(\w^{\ast})$ arbitrarily.
By Lemma\,\ref{prop:0312}, there exists some $\gamma>0$ such that 
\begin{equation}
\mu_{k-1}^2\ge \gamma|w^k_i|^{\frac{2}{2-p}}\label{eq:0312-1818-0607}
\end{equation}
for all $k$ sufficiently large.  In what follows, we consider sufficiently large $k$ so that inequality\,\eqref{eq:0312-1818-0607} holds.
Then, by $0< p\le 1$, we get 
\begin{equation*}
\frac{\mu_{k-1}^{2-p}}{\gamma^{\frac{2-p}{2}}}\ge |w^k_i|,\label{eq:0312-1818-2-0607}
\end{equation*}
which implies
\begin{align}
\frac{1}{p}(\nabla \bphi_{\mu_{k-1}}(\w^k))_i\hat{\zeta}_i^k
=&\left|w^k_i((w^k_i)^2+\mu_{k-1}^2)^{\frac{p}{2}-1}\hat{\zeta}^k_i\right|\notag \\
\le& \left|w^k_i\mu_{k-1}^{2(\frac{p}{2}-1)}\hat{\zeta}^k_i\right|\notag \\
\le& \frac{\mu_{k-1}^{2-p}}{\gamma^{\frac{2-p}{2}}}\mu_{k-1}^{2(\frac{p}{2}-1)}\left|\hat{\zeta}^k_i\right|\notag\\
=&\gamma^{\frac{p}{2}-1}\left|\hat{\zeta}^k_i\right|.\label{al:1216:2340-0607}
\end{align}
From relation\,\eqref{al:1216:2340-0607} and expression\,\eqref{eq:0605-4}
we obtain 
$\lim_{k\to \infty}(\nabla \bphi_{\mu_{k-1}}(\w^k))_i\hat{\zeta}_i^k = 0$.
Since $i\in I(\w^{\ast})$ was arbitrarily chosen, 
{it holds that 
\begin{equation}
\lim_{k\to\infty}\sum_{i\in I(\w^{\ast})}(\nabla \bphi_{\mu_{k-1}}(\w^k))_i\hat{\zeta}^k_i = 0.
\label{eq:0618-1}
\end{equation}
It then follows that 
\begin{align*}
\lim_{k\to\infty}\nabla \bphi_{\mu_{k-1}}(\w^k)^{\top}\hat{\bzeta^k}&=\lim_{k\to\infty}\left(\sum_{i\in I(\w^{\ast})}\left(\nabla \bphi_{\mu_{k-1}}(\w^k)\right)_i\hat{\zeta}^k_i
+\sum_{i\notin I(\w^{\ast})}\left(\nabla \bphi_{\mu_{k-1}}(\w^k)\right)_i\hat{\zeta}^k_i\right)     \\ 
      &=\lim_{k\to\infty}\sum_{i\notin I(\w^{\ast})}\left(\nabla \bphi_{\mu_{k-1}}(\w^k)\right)_i\hat{\zeta}^k_i\\
      &=\sum_{i\notin I(\w^{\ast})}{p}\,{\rm sgn}(w_i^{\ast})|w_i^{\ast}|^{p-1}\hat{\zeta}^{\ast}_i,                                                                             
\end{align*}
where 
the second equality follows from \eqref{eq:0618-1} and the last equality is due to the relation
\begin{equation}
\lim_{k\to\infty}(\nabla\bphi_{\mu_{k-1}}(\w^k))_i=p\,{\rm sgn}(w_i^{\ast})|w_i^{\ast}|^{p-1}\ \ (i\notin I(\w^{\ast})),\label{eq:0618-2}
\end{equation}
which can be derived from \eqref{al:1214-1}.}
{Therefore,} we conclude the desired expression \eqref{eq:0607-2} and thus \eqref{eq:0607-1}.
{In addition to \eqref{eq:0618-2},}
for $i\notin I(\w^{\ast})$, 
we obtain from \eqref{al:1214-2} that 
\begin{align*}
\lim_{k\to\infty}(\nabla^2\bphi_{\mu_{k-1}}(\w^k))_{ii} = p(p-1)|w_i^{\ast}|^{p-2}.
\end{align*}
Then, 
forcing $k\to \infty$ in \eqref{eq:0605-1} yields
\begin{align}
\left. \frac{\partial \left(\nabla_{{\w}}G(\w,\bar{\blambda})^{\top}\hat{\bzeta}^{\ast}\right)}
{\partial w_i}
\right|_{(\w,\blambda)=(\w^{\ast},\blambda^{\ast})}+\lambda_1^{\ast}p(p-1)|w_i^{\ast}|^{p-2}\hat{\zeta}_i^{\ast}=0 \ \ (i\notin I(\w^{\ast})),\notag 
\end{align}
which can be transformed by using \eqref{eq:0605-4} into
\begin{align}
&\left.
\frac{\partial \left(\sum_{j\notin I(\w^{\ast})}\frac{\partial G(\w,\bar{\blambda})}{\partial w_j}\hat{\zeta}^{\ast}_j\right)}{\partial w_i}
\right|_{(\w,\blambda)=(\w^{\ast},\blambda^{\ast})}
+\lambda_1^{\ast}p
\left.\frac{\partial \left(
\sum_{j\notin I(\w^{\ast})}{\rm sgn}(w_j)|w_j|^{p-1}
\hat{\zeta}_j^{\ast}
\right)}{\partial w_i}\right|_{\w=\w^{\ast}}\vspace{2em}\notag \\
&\hspace{0em}=0 \ \ (i\notin I(\w^{\ast})).\label{al:0607-1}
\end{align}
Put $\tilde{\w}:=(w_i)_{i\notin I(\w^{\ast})}$.
{Letting $k\to \infty$ in \eqref{eq:0714-1708}, we get
$\nabla R_i(\w^{\ast})^{\top}\hat{\bzeta}^{\ast}-\hat{\eta}^{\ast}_i=0\ \ (i=2,\ldots,r)$, 
which together with \eqref{eq:0605-4} implies 
\begin{equation}
\sum_{j\notin I(\w^{\ast})}\frac{\partial R_i(\w^{\ast})}{\partial w_j}\hat{\zeta}_j^{\ast}
-\hat{\eta}^{\ast}_i=0\ \ (i=2,\ldots,r).
\label{eq:0714-1740}
\end{equation}}
{Now, let $\bm{\Psi}^{\ast}:=(\Psi^{\ast}_i)_{i\notin I(\w^{\ast})}^{\top}\in \R^{n-|I(\w^{\ast})|}$ with 
\begin{equation}
\Psi_i^{\ast}:=
\left.
\frac{\partial \left(\sum_{j\notin I(\w^{\ast})}\frac{\partial G(\w,\bar{\blambda})}{\partial w_j}\hat{\zeta}^{\ast}_j\right)}{\partial w_i}
\right|_{(\w,\blambda)=(\w^{\ast},\blambda^{\ast})}
+\lambda_1^{\ast}p
\left.\frac{\partial \left(
\sum_{j\notin I(\w^{\ast})}{\rm sgn}(w_j)|w_j|^{p-1}
\hat{\zeta}_j^{\ast}
\right)}{\partial w_i}\right|_{\w=\w^{\ast}}
\label{al:0607-1-2}
\end{equation}
and 
${\bm e}^j\in \R^{r}$ be the vector such that the $j$-th element is 1 and others are 0s. 
In addition, $\Phi_i$ $(i\notin I(\w^{\ast}))$, $\nabla_{(\tilde{\w},\blambda)} \Phi_i$ $(i\notin I(\w^{\ast}))$, and $\nabla_{(\tilde{\w},\blambda)}\lambda_i\ (i\in I(\blambda^{\ast}))$
are the functions defined in Assumption~A4, \eqref{eq:0713-1}, and \eqref{al:0714-2328} in Lemma\,\ref{lem:0607},
respectively.}
{Then, it follows that 
\begin{align}
&\sum_{j\notin I(\w^{\ast})}\nabla_{(\tilde{\w},\blambda)} \Phi_j(\w^{\ast},\blambda^{\ast})\hat{\zeta}_j^{\ast}
-
\sum_{j\in I(\blambda^{\ast})} \nabla_{(\tilde{\w},\blambda)} \lambda_j|_{\blambda=\blambda^{\ast}}\hat{\eta}^{\ast}_j\notag \\
&=
\sum_{j\notin I(\w^{\ast})}\begin{bmatrix}
\nabla_{\tilde{\w}}\Phi_j(\w^{\ast},\blambda^{\ast})\\
\nabla_{\blambda}\Phi_j(\w^{\ast},\blambda^{\ast})
\end{bmatrix}\hat{\zeta}_j^{\ast}
-
\sum_{j\in I(\blambda^{\ast})}
\begin{bmatrix}
{\rm zeros}(n-|I(\w^{\ast})|,1)
\\
\hat{\eta}_j^{\ast}
\bm e^j
\end{bmatrix}
\notag \\
&=
\begin{bmatrix}
\left(
\left.
\frac{\partial 
\left(\sum_{j\notin I(\blambda^{\ast})}
\Phi_j(\w,\blambda) \hat{\zeta}_j^{\ast}
\right)
}{\partial w_i}
\right|_{(\w,\blambda)=(\w^{\ast},\blambda^{\ast})}
\right)_{i\notin I(\w^{\ast})}^{\top}\\
-\hat{\bbeta}^{\ast}+
\sum_{j\notin I(\w^{\ast})}
\nabla_{\blambda}\Phi_j(\w^{\ast},\blambda^{\ast})\hat{\zeta}_j^{\ast}
\end{bmatrix}\notag \\
&=
\begin{bmatrix}
{\bm \Psi}^{\ast}\\
\sum_{j\notin I(\w^{\ast})}\hat{\zeta}_j^{\ast}\left(p\,{\rm sgn}(w_j^{\ast})|w_j^{\ast}|^{p-1}\right)\\
\sum_{j\notin I(\w^{\ast})}
\frac{\partial R_2(\w^{\ast})}{\partial w_j}
\hat{\zeta}_j^{\ast}-\hat{\eta}_2^{\ast}
\\
\vdots\\
\sum_{j\notin I(\w^{\ast})}
\frac{\partial R_r(\w^{\ast})}{\partial w_j}
\hat{\zeta}_j^{\ast}-\hat{\eta}^{\ast}_r
\end{bmatrix}\notag \\                                      
&=\bm 0,\label{eq:0615-2}
\end{align}
where ${\rm zeros}(n-|I(\w^{\ast})|,1)$ denotes the zero matrix in $\R^{n-|I(\w^{\ast})|}$,
the second equality follows from \eqref{eq:0715-057},
the third one is from 
\eqref{eq:0606-2}, definition\,\eqref{al:0607-1-2} of $\bm{\Psi}^{\ast}$, and easy calculation, and the last one is derived from \eqref{eq:0607-1}, \eqref{eq:0714-1740}, and \eqref{al:0607-1-2}.
Expression\,\eqref{eq:0615-2} together with Lemma~\ref{lem:0607}
entails $\hat{\zeta}^{\ast}_i=0\ ({i\notin I(\w^{\ast})})$ and $\hat{\eta}^{\ast}_i=0\ (i\in I(\blambda^{\ast}))$.
Hence, by \eqref{eq:0605-4}, we obtain $\|\hat{\bzeta}^{\ast}\|^2+\sum_{i\in I(\blambda^{\ast})}|\hat{\eta}^{\ast}_i|^2=\bm 0$.
However, it contradicts \eqref{eq:0606-1}.}
Therefore, the sequence $\{(\bzeta^k,\bbeta^k)\}$ is bounded.
\\
\hfill$\blacksquare$}

\renewcommand{\theequation}{B.\arabic{equation}}
\renewcommand{\thealgorithm}{B.\arabic{algorithm}}
\renewcommand{\thetheorem}{B.\arabic{theorem}}
\def\thesection{\Alph{section}}
\setcounter{equation}{0}
\setcounter{theorem}{0}
\setcounter{algorithm}{0}

{\section{Description of the algorithm for solving the smoothed problem\,\eqref{eqn:smoothprob}.}\label{appendix:B}
\subsection{Implicit function based method}\label{appendix:B1}
In this section, we describe the algorithm that is used for solving the following problem arising by smoothing problems\,\eqref{al:0913} and \eqref{al:0913-2} in the numerical experiments in Section~\ref{sec:numeric}:
\begin{align}\label{al:0913:smoothed}
\begin{array}{cll}
\displaystyle{\min_{(\bw,\blambda)\in \R^n\times \R^{n+1}}}&\ &f_{\rm val}(\w):=\|\bA_{\rm val}{\w}-\bm b_{\rm val}\|_2^2 \\ 
\mbox{s.t. }&  &\bw \in\displaystyle{\mathop{\rm argmin}_{\hat{\bw}}}\ \left\{\phi_{\mu}(\hat{\w},\blambda):=\|\A_{\rm tr}\hat\w-\bm b_{\rm tr}\|^2+e^{\lambda_1} \sum_{i=1}^n(\hat{w}_i^2+\mu^2)^{\frac{p}{2}}+\nu \hwCw\right\},
\end{array}
\end{align}
where 
$\nu\in \{0,1\}$ and $\C(\bm \bar{\blambda}):={\rm Diag}(\exp(\lambda_i))_{i=2}^{n+1}$.
The above problems with $\nu=0$ and $1$ correspond to problems\,\eqref{al:0913} and \eqref{al:0913-2}, respectively.
Our goal is to compute a KKT triplet $(\w,\blambda,\bbeta)\in \R^n\times \R^{n+1}\times \R^n$ of the above problem with the constraint replaced by the equality constraint $\nabla_{\w}\phi_{\mu}(\w,\blambda)=\0$.
Namely, we compute $(\w,\blambda,\bbeta)$ which satisfies
\begin{align}
\Theta(\w,\bbeta):=
\begin{bmatrix}\nabla f_{\rm val}(\w)\\ \0
\end{bmatrix}  +
\begin{bmatrix}
\nabla^2_{\w\w}\phi_{\mu}(\w,\blambda)\\
\nabla^2_{\w\blambda}\phi_{\mu}(\w,\blambda)
\end{bmatrix}\bbeta=\0,\ \nabla_{\w}\phi_{\mu}(\w,\blambda)=\0,
\label{al:1906-0709}
\end{align}
where $\nabla^2_{\w\blambda}\phi_{\mu}(\w,\blambda)=\nabla_{\blambda}\left(\nabla_{\w}\phi_{\mu}(\w,\blambda)\right)\in \R^{{(n+1)}\times n}$.

Given $\widetilde{\blambda}$ and $\mu$, let $\widetilde{\w}$ be a stationary point of the smoothed lower-level problem  
$\min_{{\w}}\phi_{\mu}({\w},\blambda)$.
According to the standard implicit function theorem,
if $\nabla^2_{\w\w}\phi_{\mu}(\widetilde{\w},\widetilde{\blambda})$ is of full rank,
there exist some open neighborhood $U_{\widetilde{\bm{\lambda}}}$ of $\widetilde{\bm{\lambda}}$ and a twice continuously differentiable implicit function $\bm{w}(\cdot):U_{\widetilde{\bm{\lambda}}}\to \R^n$ such that 
\begin{align*}
\widetilde{\bm{w}}=\bm{w}(\widetilde{\bm{\lambda}}),\ \nabla_{\w}\phi_{\mu}(\bm{w}(\bm{\lambda}),\bm{\lambda}) = \bm{0}\ \ (\bm{\lambda}\in U_{\widetilde{\bm{\lambda}}}).
\end{align*}
In $U_{\widetilde{\bm{\lambda}}}$, we may regard problem\,\eqref{al:0913:smoothed}
with the constraint replaced by
$\nabla_{\w}\phi_{\mu}(\w,\blambda)=\0$
as
\begin{equation}
\min_{\blambda\in U_{\widetilde{\bm{\lambda}}}}\ \left\{F(\blambda):=\|\bm A_{{\mathrm {val}}}\bm{w}({\bm{\lambda}})-\bm{b}_{{\mathrm {val}}}\|_2^2\right\}.
\label{eq:0406-1}
\end{equation}
By the implicit function theorem again, we then have 
$$
\nabla \bm{w}(\widetilde{\blambda})=-\nabla_{\w\blambda}^2\phi_{\mu}(\w(\widetilde{\blambda}),\widetilde{\blambda})\left(\nabla^2_{\bm{w}\w}\phi_{\mu}(\bm{w}(\widetilde{\blambda}),\widetilde{\blambda}) \right)^{-1},
$$
and hereby the gradient of the objective of problem~\eqref{eq:0406-1} at $\widetilde{\blambda}$ is expressed as follows: 
\begin{align*}
\nabla F(\widetilde{\blambda})
&=\nabla_{\blambda} \|\A_{{\mathrm {val}}}\bm{w}({\widetilde{\blambda}})-\bm{b}_{{\mathrm {val}}}\|_2^2\notag\\
&=  2\nabla \bm{w}(\widetilde{\blambda})
\A_{{\mathrm {val}}}^{\top} \left(\A_{{\mathrm {val}}}\bm{w}({\widetilde{\blambda}})-\bm{b}_{{\mathrm {val}}}\right)
\notag\\
&= -2
\nabla_{\w\blambda}^2\phi_{\mu}(\w(\widetilde{\blambda}),\widetilde{\blambda})\left(\nabla^2_{\bm{w}\w}\phi_{\mu}(\bm{w}(\widetilde{\blambda}),\widetilde{\blambda}) \right)^{-1}\A_{{\mathrm {val}}}^{\top} \left(\A_{{\mathrm {val}}}\bm{w}({\widetilde{\blambda}})-\bm{b}_{{\mathrm {val}}}\right).\label{eq:0406-1-2}   
\end{align*}
By computing the above gradient at each iterate, we can preform the quasi-Newton method\,\citep{nocedal2006numerical} for problem\,\eqref{eq:0406-1} to have a solution $\blambda^*$ with $\nabla F(\blambda^*)=\0$. Once $\blambda^*$ is gained together with $\w(\blambda^*)$,
we substitute them into the first equation $\Theta(\w,\bbeta)=\0$ in \eqref{al:1906-0709} and solve the resultant linear equation
$\Theta(\w(\blambda^*),\bbeta)=\0$ for $\bbeta$ to have a solution, say $\bbeta^*$.
The triplet $(\w(\blambda^*),\blambda^*,\bbeta^*)$ is then nothing but the desired KKT triplet.

The overall algorithm is described as in Algorithm\,\ref{alg_subproblem0}.
\begin{algorithm}[tb]
\caption{Implicit function based quasi-Newton method for the smoothed subproblem}                                  
\label{alg_subproblem0}  
\begin{algorithmic}[1]                  
\REQUIRE $\bm{\lambda}^0\in\mathbb{R}^{\dim}$, $\epsilon \geq 0$, $\alpha,\beta\in (0,1)$, ${\bm B}_0\in S^{\dim}_{++}$ ($S^{\dim}_{++}$: The set of $({\dim})\times({\dim})$ real symmetric positive definite matrices);
Set $k\gets0$.
\WHILE{$\nabla F(\blambda^k)\ge \epsilon$}
\STATE 
Find $\bm{w}^k$ satisfying $\nabla_{\w}\phi_{\mu}(\w^k,\blambda^k)=0$. 
\STATE 
Set ${\bm d}_{\blambda}\gets -{\bm B}_k^{-1} \nabla F(\blambda^k)$.
\STATE Find the smallest integer $\ell_k\ge 0$ satisfying 
\begin{equation}
F(\blambda^k+\beta^{\ell_k}{\bm d}_{\blambda})\le  F(\blambda^k) +\alpha \beta^{\ell_k}\nabla F(\blambda^k)^{\top}{\bm d}_{\blambda}.
\label{eq:0407-0124}
\end{equation}
Set $t_k\gets \beta^{\ell_k}$.
\STATE $\bm{\lambda}^{k+1} \gets \bm{\lambda}^{k}+t_k {\bm d}_{\blambda}$.
\STATE Set ${\bm B}_{k+1}\in S^{\dim}_{++}$.
\STATE $k\gets k+1$
\ENDWHILE
\STATE  Set $(\bar{\w},\bar{\blambda})\gets (\w^k,\blambda^k)$.
\STATE  Solve $\Theta(\bar{\w},\bar{\blambda},\bbeta)=\0$ for $\bbeta$ to obtain a Lagrange multiplier $\bar{\bbeta}$.
\OUTPUT $(\bar{\w},\bar{\blambda},\bar{\bbeta})$
\end{algorithmic}
\end{algorithm}
For the algorithm to work, 
the full-rankness of $\nabla^2_{\w\w}\phi_{\mu}(\widetilde{\w},\widetilde{\blambda})$ is necessary to ensure the existence of the implicit function
$\w(\cdot)$. This is expected to hold in many instances, although it cannot be guaranteed generally. 
We must solve the lower-level problem in Line~2 every time $\ell_k$ is updated while performing linesearch\,\eqref{eq:0407-0124}, 
and thus how we solve the smoothed lower-level problem affects the overall efficiency of Algorithm\,\ref{alg_subproblem0}. In the subsequent section, we will present a certain Newton-type method for solving the smoothed lower-level problem.

{Next, we make a remark on the linesearch procedure in Algorithm\,\ref{alg_subproblem0}. 
As mentioned previously, we need to solve the smoothed lower-level problem $\min_{{\w}}\phi_{\mu}({\w},\blambda^k+\beta^{\ell_k}{\bm d}_{\blambda})$
so as to evaluate $F(\blambda^k+\beta^{\ell_k}{\bm d}_{\blambda})$ every time $\ell_k$ is incremented. 
Actually, to compute $F(\blambda^k+\beta^{\ell_k}{\bm d}_{\blambda})$, we need to know
the value of $\w(\blambda^k+\beta^{\ell_k}{\bm d}_{\blambda})$ by solving the equation
$\nabla_{\w}\phi_{\mu}({\w},\blambda^k+\beta^{\ell_k}{\bm d}_{\blambda})=\bm 0$. 
However, the smoothed lower-level problem $\min_{{\w}}\phi_{\mu}({\w},\blambda^k+\beta^{\ell_k}{\bm d}_{\blambda})$ is nonconvex when $p<1$ and thus the set of
solutions of $\nabla_{\w}\phi_{\mu}({\w},\blambda^k+\beta^{\ell_k}{\bm d}_{\blambda})=\bm 0$ is not singleton in general\footnote{When $p=1$,
$\min_{{\w}}\phi_{\mu}({\w},\blambda^k+\beta^{\ell_k}{\bm d}_{\blambda})$ is strongly convex minimization in virtue of the term
$\sum_{i=1}^n(w_i^2+\mu^2)^{\frac{p}{2}}$ with $\mu>0$, and thus its solution set is singleton.}.
This fact yields that} applying a numerical method to this equation may not return $\w(\blambda^k+\beta^{\ell_k}{\bm d}_{\blambda})$.
Nevertheless, in practice, we expect $\w(\blambda^k+\beta^{\ell_k}{\bm d}_{\blambda})$ to be computed successfully
by applying, e.g., a Newton-type method with $\w(\blambda^k)$ as a starting point to the equation, because
$\w(\blambda^k+\beta^{\ell_k}d_{\blambda})$ actually gets closer to $\w(\blambda^k)$ as $\ell_k$ is increased in the linesearch procedure.}

The convergence analysis of Algorithm\,\ref{alg_subproblem0} can be mostly done in a manner similar to that of the standard quasi-Newton method. 
Indeed, we can show that any accumulation point of $\{(\w^k,\blambda_k)\}$
is a KKT point of the smoothed subproblem under the following two sets of assumptions: 
\begin{assumption} Let $\{\bm{\lambda}^{k}\}$ be a sequence produced by Algorithm \ref{alg_subproblem0}. Then, the following properties hold:
\begin{enumerate}
\item The sequence $\{\bm{\lambda}^k\}$ is bounded.
\item There exist some $\alpha_1, \alpha_2\ (0<\alpha_1\leq\alpha_2)$ such that 
\begin{align}
\alpha_1 {\bm E}\preceq \B_k \preceq \alpha_2 {\bm E} 
\end{align}
for all $k$, where ${\bm E}$ is the identity matrix with the same size with $\B_k$, and for symmetric matrices ${\bm X},{\bm Y}$, ${\bm X}\preceq \bm Y$ stands for ${\bm Y}-{\bm X}$ is positive semidefinite.
\end{enumerate}
\label{assumption_1}

\end{assumption}
The above assumptions are often made in convergence analysis of the quasi-Newton method, whereas the following assumption is specific to our setting.

\begin{assumption}\label{assumption_2}
$\nabla_{\bm{w}\bm{w}}^2\phi_{\mu}({\bm{w}^k},{\bm{\lambda}^k})$ is of full rank for each $k$, and
so is $\nabla_{\bm{w}\bm{w}}^2\phi_{\mu}({\bm{w}^{\ast}},{\bm{\lambda}^{\ast}})$ even at an arbitrary accumulation point $(\bm{w}^{\ast},\bm{\lambda}^{\ast})$.
\end{assumption}
Assumption~\ref{assumption_2} ensures that the implicit function $\w(\cdot)$ exists at each iterate and even at an arbitrary accumulation point.

The following theorem holds under Assumptions \ref{assumption_1} and \ref{assumption_2}.
As the proof is similar to that for the quasi-Newton method, we omit it here.
\begin{theorem}
Suppose that Assumptions \ref{assumption_1} and \ref{assumption_2} hold. 
Then, any accumulation point of $\{\bm{\lambda}^{k}\}$ satisfies
$\nabla F({\bm{\lambda}}) =\bm{0}$.
\end{theorem}

\subsection{Newton-type method for solving the smoothed lower-level problem}\label{appendix:B2}
In this section, we describe the modified Newton-type algorithm used for solving the smoothed lower-level problem $\min_{\w}\phi_{\mu}(\w,\blambda)$ in problem\,\eqref{al:0913:smoothed}. 
For brevity, the algorithm is presented in the form pertaining to the following problem:
\begin{equation}
\min_{\w}\ \psi_{\mu}(\w):=\frac{1}{2}\|\bC\w-\bff\|^2 + \eta\sum_{i=1}^n(w_i^2+\mu^2)^{\frac{p}{2}},
\label{eq:0414-1}
\end{equation}  
where $\eta\in \R$ is positive, $\bC\in \R^{m\times n}$, and $\bff\in \R^m$.
Note that by setting $\bC$ and $\bff$ appropriately, the function $\psi_{\mu}$ above reduces to $\phi_{\mu}$.

We begin with the update-formula of the standard Newton method for problem~\eqref{eq:0414-1} at the $r$-th iterate $\w^r\in \R^n$:
\begin{align*}
&\w^{r+1}\leftarrow \w^r - \bm B(\w^r)^{-1}\nabla\psi_{\mu}(\w^r),\ \mbox{where}\\
&\bm B(\w):=
\bC^{\top}\bC+
p\eta{\rm Diag}\left(
(w_i^2+\mu^2)^{\frac{p}{2}-1}
+
\underbrace{\frac{p-2}{2}w_i^2(w_i^2+\mu^2)^{\frac{p}{2}-2}}_{\mbox{negative}}
\right)_{i=1}^n.
\end{align*}
However, the matrix $\B(\w^r)$ is not necessarily nonsingular because of the above negative part, and thus 
the Newton method may not work.\footnote{
In fact, when $p=1$, $\B(\w)$ is nonsingular even in the presence of the negative part, because
\begin{align}
p\eta{\rm Diag}\left(
(w_i^2+\mu^2)^{\frac{p}{2}-1}
+
\frac{p-2}{2}w_i^2(w_i^2+\mu^2)^{\frac{p}{2}-2}
\right)_{i=1}^n
&=p\eta{\rm Diag}\left(
\left(\mu^2+\frac{p}{2}w_i^2\right)(w_i^2+\mu^2)^{\frac{p}{2}-1}
\right)_{i=1}^n,
\end{align}
which turns out to be positive definite.
} 
As a remedy, in the spirit of the modified Newton method, we modify $\B(\w^r)$ to the following matrix $\widetilde{\B}(\w^r)$ by deleting the negative part:
$$
{\widetilde{\B}(\w):=\bC^{\top}\bC + p\eta {\rm Diag}\left({(w_i^2+\mu^2)^{\frac{p}{2}-1}}\right)_{i=1}^n}.
$$
Now, the presented algorithm is described formally as in Algorithm~\ref{alg_subproblem}.
In fact, the algorithm is identical to the one that is proposed by \citet[Section~2]{lai2011unconstrained}, which gives the following theorem:
\begin{theorem}{\citep[Theorem~2.1]{lai2011unconstrained}}
Let $\{\w^r\}$ be a sequence generated by Algorithm\,\ref{alg_subproblem} with $\epsilon=0$.
It is bounded and its arbitrary accumulation point satisfies $\nabla \psi_{\mu}(\w)=\bm 0$.
\end{theorem}
It is worthwhile to note that Algorithm\,\ref{alg_subproblem} does not request a linesearch procedure for the global convergence, which is often costly. 

\begin{algorithm}
\caption{Modified Newton-type method for $\min_{\w}\psi_{\mu}(\w)$}
\label{alg_subproblem}
\begin{algorithmic}[1]
\REQUIRE $\w^0\in \R^n$, $r\leftarrow0$, $\epsilon>0$   
\WHILE{$\|\nabla\psi_{\mu}(\w^r)\|>\epsilon$} 
\STATE $\w^{r+1} \leftarrow \w^r - \widetilde{\bm B}(\w^r)^{-1}\nabla \psi_{\mu}(\w^r)$,
\STATE $r \leftarrow r+1$.
\ENDWHILE
\STATE Set $\bar{\w}\gets \w^r$
\OUTPUT $\bar{\w}$
\end{algorithmic}
\end{algorithm}

\section{Supplementary tables and figures of \texttt{bayesopt} for the numerical experiments}\label{sec:appC}
This section provides the supplementary Tables~\ref{tbl1:appendix} and \ref{tbl2:appendix} 
that show the first time when \texttt{bayesopt} 
found the best observed objective value.
These results were recorded in a single run of \texttt{bayesopt} for each problem, thus differ from the averaged results shown in Tables\,\ref{tbl:comparison_full} and \ref{tbl2:comparison_full}. 
In addition, it also gives Figures\,\ref{Fig;0618-1} and \ref{Fig;0618-2} that depict how the best observed objective value of \texttt{bayesopt} varies over time. In order to monitor the change of values in a long period, 
we extended the time limit of \texttt{bayesopt} to 1200 seconds from 600 seconds that was employed for making Tables~\ref{tbl1:appendix} and \ref{tbl2:appendix}.  
These figures were obtained by solving the problems organized from the data sets of {\bf CpuSmall} and {\bf Student}. 
\begin{table}[H]
\centering
\caption{
The first time of \texttt{bayesopt} for finding a solution of problem\,\eqref{al:0913}, which attains the final best observed objective value, i.e., validation value
(Those results of \texttt{bayesopt} were recorded in a single run, thus differ from the averaged results over 5 runs shown in Table\,\ref{tbl:comparison_full}.
For the sake of comparison, the results of Algorithm~\ref{alg:smoothing} are also shown, which are the same as those in Table\,\ref{tbl:comparison_full}.
``{\ftime}'' stands for the first time in seconds where 
the best objective value is observed.  
The best values in {\ftime} and $\mbox{Err}_{\rm val}$ are displayed in bold.)
}
\label{tbl1:appendix}
	{
\scalebox{1}{
\begin{tabular}{crrrrr}
\toprule
\multicolumn{2}{c}{Data}&
\multicolumn{2}{c}{\texttt{bayesopt}}&
\multicolumn{2}{c}{{\Alg}}\\
\cmidrule(lr){1-2}
\cmidrule(lr){3-4}
\cmidrule(lr){5-6}
\multicolumn{1}{c}{name}&
\multicolumn{1}{c}{$p$}&
\multicolumn{1}{c}{$\mbox{Err}_{\rm val}$}&
\multicolumn{1}{c}{\ftime}&
\multicolumn{1}{c}{$\mbox{Err}_{\rm val}$}&
\multicolumn{1}{c}{time (sec)}\\
\midrule
{\bf Facebook} &1 	&	 6.476& 42.256&  {\bf 6.474}&  {\bf 17.399} \\
& 0.8&{\bf 6.504}&122.158&  6.512&  {\bf 22.242}\\
&0.5&{\bf 6.536}
&66.589&6.550 & {\bf 16.820} \\ \hline
{\bf Insurance} & 1&{\bf 95.764}&49.538 &{\bf 95.764}  &	{\bf 33.077} \\
& 0.8	&   95.737&   63.580    &	   {\bf 95.676} 	&	{\bf 32.465} 	\\
	        & 0.5 	&95.604		&{\bf 13.960}    &   {\bf 95.562} 	&	44.904  \\ \hline

{\bf Student} 	&  1    &   {\bf 0.777} &12.405      &   0.778 &	{\bf 10.586}    \\
	        & 0.8 	&{\bf 0.724}	&339.980	&  {\bf 0.724} &	{\bf 2.348} 	  \\

        & 0.5	&0.731	        &147.118	&    {\bf 0.724} &	 {\bf 3.618}\\ \hline
{\bf BodyFat} 	&   1	&{\bf 0.209}   & 4.839	& {\bf 0.209} 	&	{\bf 0.068} \\
                &  0.8	&  0.180 &3.899	& {\bf 0.179} 	&	{\bf 0.203} \\
&  0.5 &  {\bf 0.212}	&1.160 & 0.267 &{\bf 0.395} \\ \hline
{\bf CpuSmall} &1 	&    131124&	{\bf 1.202}&  {\bf 130981} 	&	11.299 \\
& 0.8 	&    131187&1.853	& {\bf 130982}&	{\bf 0.741} 	\\
	    & 0.5	&  131234&1.712	& {\bf 131058} 	&	{\bf 0.672} 	\\ 
\bottomrule
\end{tabular}}}
\end{table}

\begin{figure}[htbp]
 \caption{\redcolor
 Best observed objective value (validation value) vs running time in seconds (\texttt{bayesopt} for problem\,\eqref{al:0913} with a single $\ell_{0.8}$ hyperparameter)}
 \label{Fig;0618-1}
 \centering
\includegraphics[scale = 0.8]{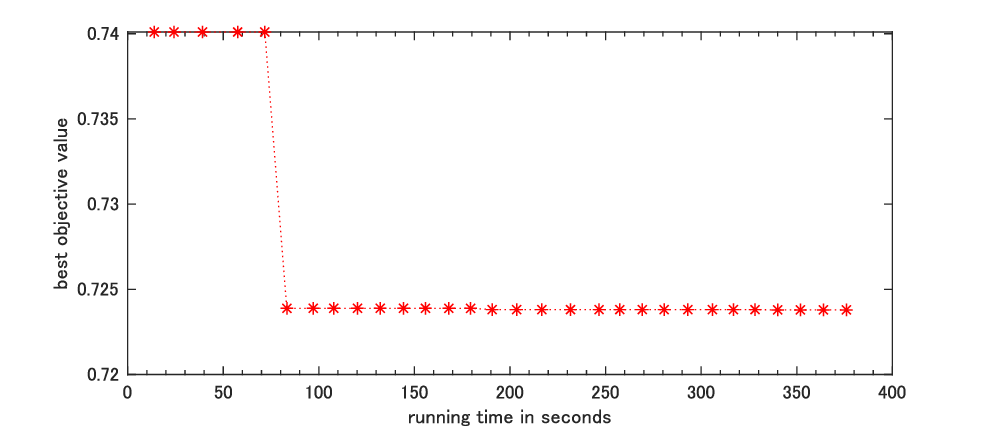}
 \subcaption{{\bf Student} (The number of hyperparameters is 1;
the proposed bilevel algorithm found a solution with 0.724 in 2 seconds.)}
\hspace{0em}\includegraphics[scale = 0.8]{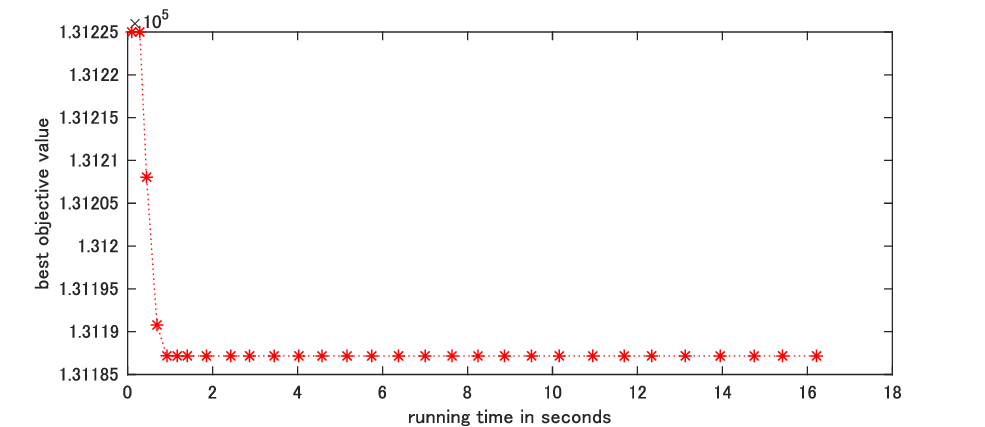}
\subcaption{{\bf CpuSmall} (The number of hyperparameters is 1; the proposed bilevel algorithm found a solution with $1.3098\times 10^5$ in 0.7 seconds.)}
\end{figure}
\begin{table}[H]
\centering
\caption{
\redcolor The first time of \texttt{bayesopt} for finding a solution of problem\,\eqref{al:0913-2}, which attains the final best observed objective value, i.e., validation value
(Those results of \texttt{bayesopt} were recorded in a single run, thus differ from the averaged results over 5 runs shown in Table\,\ref{tbl2:comparison_full}.
For the sake of comparison, the results of Algorithm~\ref{alg:smoothing} ({\algone}, {\algtwo}) are also shown, which are the same as those in Table\,\ref{tbl2:comparison_full}.
``{\ftime}'' stands for the first time in seconds where 
the best objective value is observed.  
The best values in {\ftime} and $\mbox{Err}_{\rm val}$ are displayed in bold.)
}
\label{tbl2:appendix}
	{
\scalebox{1}{
\begin{tabular}{crrrrrrr}
\toprule
\multicolumn{2}{c}{Data}&
\multicolumn{2}{c}{\texttt{bayesopt}}&
\multicolumn{2}{c}{{\algone}}&
\multicolumn{2}{c}{{\algtwo}}\\
\cmidrule(lr){1-2}
\cmidrule(lr){3-4}
\cmidrule(lr){5-6}
\cmidrule(lr){7-8}
\multicolumn{1}{c}{name}&
\multicolumn{1}{c}{$\sharp\blambda$}&
\multicolumn{1}{c}{$\mbox{Err}_{\rm val}$}&
\multicolumn{1}{c}{\ftime}&
\multicolumn{1}{c}{$\mbox{Err}_{\rm val}$}&
\multicolumn{1}{c}{time (sec)} &
\multicolumn{1}{c}{$\mbox{Err}_{\rm val}$}&
\multicolumn{1}{c}{time (sec)}\\
\midrule
{\bf Facebook}				&	54
&	8.780 	&	{\bf 1.518}  
&	{\bf 6.478} 	&	20.247	 	
&	--&	--
\\\hline
{\bf Insurance}					& 86
&	107.000 	&	{\bf 4.288}
&	95.694 	&	{50.714}  	
&	{\bf 94.604} 	&	{4.473}
\\\hline
{\bf Student}				& 273	
&	18.324 	&	26.756 
&	{\bf 0.771} 	&	{\bf 1.451} 
&	0.786 	&	71.775 	\\\hline
{\bf BodyFat}
& 15  
&	46.815 	&	0.337
&	0.243 	&	{\bf 0.072} 	
&	{\bf 0.130} 	&	0.695 \\\hline
{\bf CpuSmall}					
& 13
&	151394 	&	531.837  
&	131130 	&	6.222 
&	{\bf 128540} 	&	{\bf 0.658} \\
\bottomrule
\end{tabular}}}
\end{table}

 \begin{figure}[htbp]
 \caption{\redcolor Best observed objective value (validation value) vs running time in seconds (\texttt{bayesopt} for problem\,\eqref{al:0913-2} with multiple hyperparameters)}
 \label{Fig;0618-2}
 \centering
 \includegraphics[scale = 0.8]{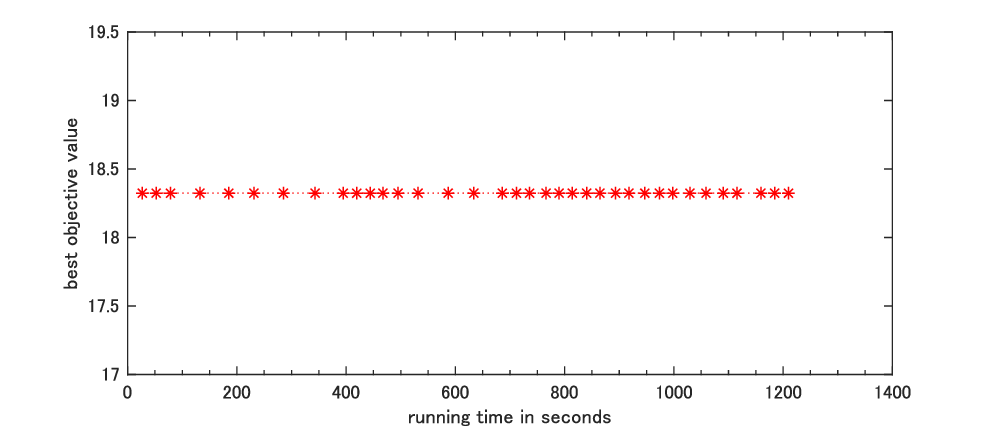}
 \subcaption{{\bf Student} (The number of hyperparameters is 273;
the proposed bilevel algorithms
found solutions with 0.771 in 1.45 seconds (Alg.1-A)
and 0.786 in 71.78 seconds (Alg.1-B).)}
\includegraphics[clip,scale = 0.45]{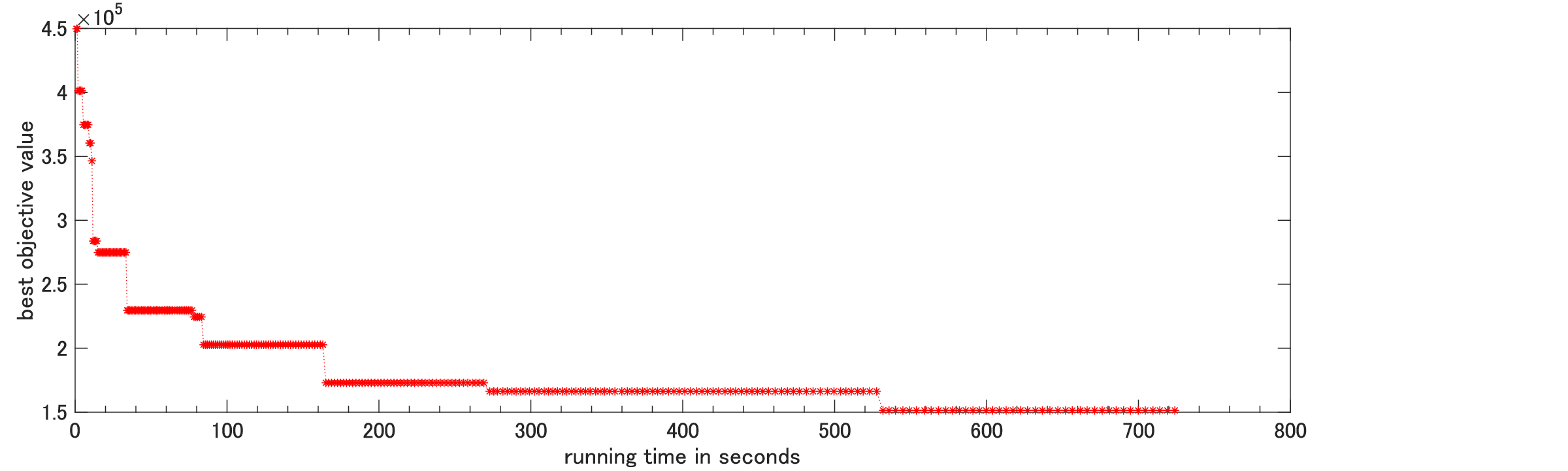}\hspace{0cm}
\subcaption{{\bf CpuSmall} (The number of hyperparameters is 13; the proposed bilevel algorithms found solutions with $1.31\times 10^5$ in 6.22 seconds (Alg.1-A)
and $1.29\times 10^5$ in 0.66 seconds (Alg.1-B).)
}\end{figure}}

\bibliography{bilevelparam}
\end{document}